\documentclass[11pt,reqno]{amsart}
\usepackage{amsmath,amssymb,amsfonts,amsthm}
\usepackage{stmaryrd}
\usepackage{cite}
\usepackage{graphicx}
\usepackage{xcolor}
\usepackage{float}
\usepackage{placeins}
\usepackage{needspace}
\newtheorem{theorem}{Theorem}[section]
\newtheorem{lemma}[theorem]{Lemma}

\newtheorem{proposition}[theorem]{Proposition}
\newcounter{assumption}
\newenvironment{assumptionblock}[1]
{\par\medskip\renewcommand{\theassumption}{\ensuremath{#1}}%
 \refstepcounter{assumption}\noindent{\normalfont\bfseries (\theassumption).}\ \itshape\ignorespaces}
{\par\medskip}
\newcounter{proofstep}
\newcounter{figurepanel}
\renewcommand{\thefigurepanel}{\roman{figurepanel}}
\numberwithin{equation}{section}
\makeatletter
\renewenvironment{proof}[1][\proofname]{\par
  \pushQED{\qed}%
  \normalfont\topsep6\p@\@plus6\p@\relax
  \trivlist
  \item[\hskip\labelsep\bfseries #1\@addpunct{.}]\ignorespaces
}{\popQED\endtrivlist\@endpefalse}
\makeatother
\allowdisplaybreaks
\begin{document}

\title[Stability and Instability of Positive Steady States]
{\textbf{Stability and Instability of the Unique Positive Steady State for a Nonlocal Population Equation}}

\author[Y. Lou and M.-Z. Xin]{Yuan Lou$^1$ and Ming-Zhen Xin$^1$}

\thanks{\hspace{-.6cm}
    $^1$School of Mathematical Sciences, Shanghai Jiao Tong University,
    Shanghai 200240, P.R. China. }

\date{\today}

\begin{abstract}
This paper is concerned with the effect of spatial heterogeneity on the
linear stability of unique positive steady states for a nonlocal population
equation. The main difficulty is that, because of the nonlocal term, the
linearized operator need not be self-adjoint or satisfy the maximum
principle, so the usual stability arguments based on the principal
eigenvalue do not apply directly. We establish linear stability for sufficiently large diffusion and,
under suitable conditions on the maximum set of the local growth
ratio, for sufficiently small diffusion.  In one space dimension, we also construct unstable positive
steady states and obtain Hopf bifurcations for sufficiently small
diffusion. Keeping the local coefficients fixed, we obtain either a supercritical
or a subcritical Hopf bifurcation by changing the weight in the nonlocal integral.
Our results show that uniqueness of a positive steady state does not imply
its linear stability.
    
    \textbf{Keywords}: Nonlocal population equation;
    linear stability; small diffusion; Hopf bifurcations
    
    \textbf{AMS Subject Classification}: 35K57; 35P15; 47A55; 47G20; 35B35
\end{abstract}

\maketitle

\section{Introduction}
\label{sec:introduction}

{\color{black}In population models, the growth rate at a given location may depend on the
total population size or resource consumption over the whole habitat. This
leads to nonlocal interactions in population dynamics, adaptive evolution, and
spatial ecology. In this paper, we study the effects of spatial heterogeneity
on the linear stability of positive steady states for the following nonlocal
population equation}
\begin{equation}
\begin{cases}
\partial_t u
=d\Delta u+
u\left(a(x)-b(x)\displaystyle\int_\Omega c(y)u(t,y)\,\mathrm{d}y\right),
& t>0,\ x\in\Omega,\\[2mm]
\partial_\nu u=0,
& t>0,\ x\in\partial\Omega,
\end{cases}
\label{eq:population-model}
\end{equation}
where \(\Omega\subset\mathbb R^N\) is a smooth bounded domain, \(d>0\) is
the diffusion rate, and \(u(t,x)\) is the population density at time \(t\)
and position \(x\). The coefficient \(a(x)\) is the local growth rate,
while \(\int_\Omega c(y)u(t,y)\,\mathrm{d}y\) measures the global crowding
or resource consumption. The coefficient \(b(x)\) describes how this global
quantity affects growth at each location. We assume that
\begin{equation}
a\in C(\overline\Omega),\qquad
b,c\in C(\overline\Omega),\qquad\text{and}\qquad
b>0\quad\text{and}\quad c>0\quad\hbox{on }\overline\Omega .
\label{eq:coefficient-assumptions}
\end{equation}

Equations of this type arise in selection-mutation models. For constant
\(b\) and \(c\), Magal and Webb \cite[Theorem~3.1]{MagalWebb2000} established
convergence to the positive equilibrium in the one-dimensional setting
whenever this equilibrium exists; Lam and Lou
\cite[Theorem~10.1.1 and Remark~10.1.3]{LamLou2022} use semiclassical analysis
to describe the concentration of the positive equilibrium as \(d\to0^+\)
at a unique interior global maximum point of the growth rate. Related nonlocal
interactions also appear in stem cell regeneration models
\cite{SuLiLouWang2023}. {\color{black}For further studies of selection-mutation equations and nonlocal population
models, we refer to
\cite{Perthame2007,PerthameBarles2008,
DesvillettesJabinMischlerRaoul2008,BarlesMirrahimiPerthame2009,
LorzMirrahimiPerthame2011,
LorenziPouchol2020,ShiShiSong2022,LiJiangShi2013,SunShiWang2013}.}

{\color{black}Spatial heterogeneity in food availability, shelter, and environmental quality
affects local growth and interaction rates. The effects of dispersal and spatial
heterogeneity on population persistence, equilibrium abundance, and competition
have been studied in}
\cite{CantrellCosner1989,CantrellCosner1991,CantrellCosner2003,Lou2006,DockeryHutsonMischaikowPernarowski1998,
LiNakashimaNi2008,Lam2017,LamLou2017,HaoLamLou2019}.
{\color{black}For \eqref{eq:population-model} with spatially varying \(b\), the existence
and uniqueness of a positive steady state are determined by the weighted
principal eigenvalue problem \eqref{eq:principal-eigenpair}. To study its
stability, we also need to consider the nonlocal term.} Let \(u_d^*\) be a
positive steady state and set
\(S_d^*=\int_\Omega c(y)u_d^*(y)\,\mathrm{d}y\). The linearized operator is
\begin{equation}
\mathcal L_d\varphi
=d\Delta\varphi+\bigl(a(x)-b(x)S_d^*\bigr)\varphi
-b(x)u_d^*(x)\int_\Omega c(y)\varphi(y)\,\mathrm{d}y,
\qquad \partial_\nu\varphi=0.
\label{eq:intro-linearized-operator}
\end{equation}
In \eqref{eq:intro-linearized-operator}, the differential part
\(A_d:=d\Delta+\bigl(a(x)-b(x)S_d^*\bigr)\) is self-adjoint. The nonlocal
term need not be self-adjoint or sign preserving.  Even when a
positive eigenfunction exists, the corresponding eigenvalue need not
be the rightmost spectral value (see  \cite{SunShiWang2013,SunShiWang2013Erratum}). 
{\color{black}Thus, we need to study the full spectrum of \(\mathcal L_d\) to determine stability.} Let \(0=\mu_1(d)>\mu_2(d)\ge\cdots\) be the
eigenvalues of \(A_d\), with a corresponding orthonormal basis of real
eigenfunctions \(\{\psi_k^d\}_{k\ge1}\) in \(L^2(\Omega)\),
chosen so that \(\psi_1^d>0\).
By \cite[Theorem~3.4]{LiCovilleWang2017},
\begin{equation}
\begin{aligned}
\sigma(\mathcal L_d)
={}&\{\mu_k(d):\mu_k(d)\text{ is a multiple eigenvalue of }A_d\}
\\
&\cup\{\mu_k(d):\mu_k(d)\text{ is simple and }
\langle b u_d^*,\psi_k^d\rangle\langle c,\psi_k^d\rangle=0\}
\\
&\cup\left\{z\notin\sigma(A_d):
\sum_{k=1}^{\infty}
\frac{\langle b u_d^*,\psi_k^d\rangle\langle c,\psi_k^d\rangle}
{z-\mu_k(d)}=-1\right\},
\end{aligned}
\label{eq:rank-one-spectrum}
\end{equation}
where \(\langle\cdot,\cdot\rangle\) denotes the \(L^2(\Omega)\) inner product. 

If \(b\) is constant, then \(b u_d^*\) is a positive multiple of
\(\psi_1^d\). Thus \eqref{eq:rank-one-spectrum} gives
\(\sigma(\mathcal L_d)\subset(-\infty,0)\), and then the positive steady state is
linearly stable whenever it exists. This is
consistent with the convergence result of Magal and Webb \cite{MagalWebb2000}.
When \(b\) varies in space, the scalar equation in
\eqref{eq:rank-one-spectrum} does not directly determine stability.
{\color{black}Coville and Girardin \cite{CovilleGirardin2025} established abstract results
on the existence, uniqueness, and stability of steady states. They also give
    a three-dimensional discrete example with symmetric mutations whose unique
    positive equilibrium is linearly unstable. For \eqref{eq:population-model}, applying these abstract criteria requires
    estimates in terms of the coefficients \(a\), \(b\), and \(c\) and the
    diffusion rate \(d\). We therefore study how spatial heterogeneity and
    diffusion affect the stability of \(u_d^*\).

{\color{black}By using the resolvent approach in \cite{LiCovilleWang2017}, we first derive
a Krein-type criterion for the linearized operator. This criterion compares
the component of the nonlocal term along the principal eigenfunction with its
orthogonal component. We obtain linear stability for all sufficiently large
diffusion rates whenever the positive steady state exists. We also prove
small-diffusion stability under suitable constancy assumptions on \(b\)
near or on the maximum set of \(a/b\), including Morse--Bott maximum sets
of dimensions \(0,\ldots,N-1\) and plateau maxima of positive measure;
see Theorem~\ref{thm:small-diffusion-stability}.}

{\color{black}We then construct unstable positive steady states and Hopf bifurcations
in one space dimension. We reduce the spectral problem for the linearized operator $\mathcal{L}_{d}$
to that of a matrix whose dimension equals the number of an effective growth
profile. The one- and two-dimensional matrices in our reduction
are stable; see Proposition~\ref{prop:one-two-mode-reduction} and
\cite{BierkensRan2014}. Hence, we consider an effective growth
profile with three maximum points. By Agmon estimates and the
Helffer--Sj\"ostrand tunneling method
\cite{Agmon1982,HelfferSjostrand1984,Helffer1988,FH2010}, the spectrum near
zero can be studied through a \(3\times3\) matrix. We choose the weight
functions so that this matrix is unstable and then prove instability for the
full linearized operator. By varying a single parameter, we obtain a Hopf
bifurcation of positive time-periodic solutions. Moreover, keeping \(a\) and \(b\) fixed, we can obtain either a
supercritical or a subcritical Hopf bifurcation by changing only \(c\).}

{\color{black}The rest of this paper is organized as follows. In Section~\ref{sec:main-results},
we give the positive steady-state formula, define linear stability, and
state the main results. In Section~\ref{sec:stability-proofs}, we prove the
stability criterion and the stability theorems. In
Section~\ref{sec:semiclassical-reduction}, we establish the semiclassical
reduction. In Section~\ref{sec:instability-hopf}, we use this reduction to
construct instability examples and both supercritical and subcritical Hopf
bifurcations.}

\section{Preliminaries and main results}
\label{sec:main-results}

{\color{black}In this section, we introduce some notation and the positive steady-state
formula, and then state our main results on stability, instability, and Hopf
bifurcation.}
We use \(\|\cdot\|_2\) for the Euclidean norm. Norms of linear and bilinear
maps are denoted by \(\|\cdot\|\) and are induced by the relevant
\(L^2\) or Euclidean norms.

Denote by \((\lambda_d,\psi_1^d)\) the principal eigenpair of
\begin{equation}
\begin{cases}
d\Delta\psi_1^d+a(x)\psi_1^d=\lambda_d b(x)\psi_1^d,
& x\in\Omega,\\
\partial_\nu\psi_1^d=0,
& x\in\partial\Omega.
\end{cases}
\label{eq:principal-eigenpair}
\end{equation}
We choose \(\psi_1^d>0\) and normalize it by
\(\|\psi_1^d\|_{L^2(\Omega)}=1\).
For the stability results in Sections~\ref{sec:main-results} and~\ref{sec:stability-proofs}, we impose the standing
assumption
\begin{equation}
\int_\Omega a(x)\,\mathrm{d}x>0.
\label{eq:positive-mean-stability}
\end{equation}
This assumption is not imposed on the instability and Hopf bifurcation
construction in Section~\ref{sec:instability-hopf}, where the coefficients are chosen separately.
{\color{black}By an argument similar to that in \cite[Theorem~2]{AllenBolkerLouNevai2008}, we have
$\lambda_d>0$ for any $d>0$.}
Any positive steady state solves the weighted eigenvalue problem
with eigenvalue \(S_d^*=\int_\Omega c(y)u_d^*(y)\,\mathrm{d}y\).
Since the eigenfunction is positive, the eigenvalue is \(\lambda_d\) and the
eigenfunction is a multiple of \(\psi_1^d\). Conversely, a positive multiple
of \(\psi_1^d\) with \(S_d^*=\lambda_d\) satisfies the steady-state equation
by direct substitution. Thus, for every \(d>0\), the unique positive steady state is
\begin{equation}
u_d^*=\alpha_d\psi_1^d,\qquad\text{and}\qquad
\alpha_d=\frac{\lambda_d}{\int_\Omega c(y)\psi_1^d(y)\,\mathrm{d}y}.
\label{eq:positive-steady-state}
\end{equation}

The linearized operator at \(u_d^*\) is
\[
\begin{cases}
\mathcal L_d\eta
=
A_d\eta
-
b(x)u_d^*(x)
\displaystyle\int_\Omega c(y)\eta(y)\,\mathrm{d}y,
& x\in\Omega,\\[2mm]
\partial_\nu\eta=0,
& x\in\partial\Omega,
\end{cases}
\]
where
\(A_d\eta=d\Delta\eta+\bigl(a(x)-\lambda_d b(x)\bigr)\eta\),
\(x\in\Omega\).
Since \(A_d\psi_1^d=0\), the principal eigenvalue of \(A_d\) is \(0\).
We call \(u_d^*\) linearly stable if
\(\operatorname{Re}z<0\) for every \(z\in\sigma(\mathcal L_d)\),
and linearly unstable if
\(\operatorname{Re}z>0\) for some \(z\in\sigma(\mathcal L_d)\).

We now state the stability results for large and small diffusion. 
\begin{theorem}
    \label{thm:large-diffusion-stability}
    For all
    sufficiently large \(d>0\), \(u_d^*\) is linearly stable.
\end{theorem}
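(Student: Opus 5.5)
The plan is to use the spectral characterization \eqref{eq:rank-one-spectrum}. The point is that, for large \(d\), the nonlocal term only sees the principal mode. The contribution from all higher modes is pushed far to the left, so it is small in the closed right half-plane.

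\textbf{Step 1 (the discrete parts of the spectrum).} First dispose of the first two sets in \eqref{eq:rank-one-spectrum}. Since \(\mu_1(d)=0\) is simple and \(\mu_k(d)<0\) for \(k\ge2\), every multiple eigenvalue of \(A_d\) is negative. Every simple \(\mu_k(d)\) with \(k\ge2\) is also negative. The value \(\mu_1(d)=0\) lies in neither of the first two sets, because
\[
\langle b u_d^*,\psi_1^d\rangle\langle c,\psi_1^d\rangle=\alpha_d\langle b\psi_1^d,\psi_1^d\rangle\langle c,\psi_1^d\rangle>0 ,
\]
using \(b,c,\psi_1^d>0\) and \(\alpha_d>0\). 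So it remains to show that the scalar equation in the third set has no root \(z\notin\sigma(A_d)\) with \(\operatorname{Re}z\ge0\).

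\textbf{Step 2 (uniform a priori bounds).} Next I need \(d\)-uniform control of the data. Dividing \eqref{eq:principal-eigenpair} by \(\psi_1^d\) or using the Rayleigh quotient gives \(0<\lambda_d\le\max(a/b)\). Testing \(A_d\psi_1^d=0\) against \(\psi_1^d\) gives
\[
d\|\nabla\psi_1^d\|_2^2\le\|a-\lambda_d b\|_\infty\le C .
\]
Poincaré's inequality then yields \(\|\psi_1^d-|\Omega|^{-1/2}\|_2\le C d^{-1/2}\), after using \(\psi_1^d>0\) and \(\|\psi_1^d\|_2=1\) to fix the mean. Hence \(\int_\Omega c\,\psi_1^d\ge c_0>0\) for large \(d\), and \(\alpha_d\le C\). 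Consequently \(\|bu_d^*\|_2\le C\) and \(\|c\|_2\le C\).

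\textbf{Step 3 (spectral gap).} By the min–max principle, comparing with the Neumann Laplacian, \(\mu_2(d)\le -d\kappa_2+\|a-\lambda_d b\|_\infty\), where \(\kappa_2>0\) is the first nonzero Neumann eigenvalue of \(-\Delta\). Thus \(\mu_k(d)\le -d\kappa_2/2\) for all \(k\ge2\) when \(d\) is large. For \(\operatorname{Re}z\ge0\) this gives \(|z-\mu_k(d)|\ge d\kappa_2/2\).

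\textbf{Step 4 (excluding roots in the right half-plane).} Split the sum into the \(k=1\) term and the rest. With \(m_d:=\langle bu_d^*,\psi_1^d\rangle\langle c,\psi_1^d\rangle>0\), suppose \(z\) is a root with \(\operatorname{Re}z\ge0\). Then
\[
\frac{m_d}{z}=-1-R(z),\qquad
|R(z)|\le\frac{\sum_{k\ge2}|\langle bu_d^*,\psi_k^d\rangle\langle c,\psi_k^d\rangle|}{d\kappa_2/2}\le\frac{2\|bu_d^*\|_2\|c\|_2}{d\kappa_2}\le\frac{C}{d}.
\]
Here the middle inequality is Cauchy–Schwarz together with Bessel's inequality. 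Since \(m_d>0\) and \(\operatorname{Re}z\ge0\), \(z\ne0\), we have \(\operatorname{Re}(m_d/z)=m_d\operatorname{Re}z/|z|^2\ge0\). On the other hand, \(\operatorname{Re}(-1-R)\le-1+C/d<0\) for large \(d\), which is a contradiction. Hence \(\sigma(\mathcal L_d)\subset\{\operatorname{Re}z<0\}\), which is linear stability.

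The main obstacle is Step 2. One needs a lower bound on \(\int_\Omega c\,\psi_1^d\) that is uniform in \(d\), since this is what bounds \(\alpha_d\). The Poincaré/energy estimate above is the first thing I would try for it. The rest is a direct perturbation estimate on the resolvent sum.
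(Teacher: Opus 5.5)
Your proof is correct and follows essentially the same route as the paper. Both arguments isolate the principal mode, use a spectral gap of order $d$ to make the orthogonal remainder $O(1/d)$ on $\{\operatorname{Re}z\ge0\}$, and get a sign contradiction from $m_d/z=-1-R(z)$ with $m_d>0$. The differences are minor: you get the gap directly by min--max comparison with the Neumann Laplacian and bound $\alpha_d$ by an energy--Poincar\'e estimate, where the paper uses a compactness argument and the Allen--Bolker--Lou--Nevai large-diffusion limits, and you quote the Li--Coville--Wang characterization of $\sigma(\mathcal L_d)$ instead of rederiving the scalar equation from the compact resolvent.
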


{\color{black}For small diffusion, \(\psi_1^d\) concentrates near the maximum set of the
local ratio. We therefore consider the structure of this set. Put}
\[
r(x):=\frac{a(x)}{b(x)},\qquad
r_*:=\max_{\overline\Omega}r,\qquad\text{and}\qquad
M:=\{x\in\overline\Omega:\ r(x)=r_*\}.
\]

\begin{assumptionblock}{H_{\rm m}}
\label{ass:mixed-maximum}
Assume that \(M=M_1\cup\cdots\cup M_J\) is a disjoint finite union. For each
\(j\), one of the following two alternatives holds.
\begin{itemize}
    \item \(M_j\Subset\Omega\) is a compact \(C^2\)-embedded submanifold of
    dimension \(k_j\), \(0\le k_j\le N-1\). There exist a neighborhood
    \(V_j\Subset\Omega\) and constants \(\theta_j^-,\theta_j^+>0\) such that
    \[
    \theta_j^-\operatorname{dist}(x,M_j)^2
    \le r_*-r(x)\le
    \theta_j^+\operatorname{dist}(x,M_j)^2,\qquad x\in V_j .
    \]
    
    \item \(M_j\subset\partial\Omega\) is a compact \(C^2\)-embedded submanifold
    of dimension \(k_j\), \(0\le k_j\le N-1\), and after flattening the boundary
    and reflecting \(r\) evenly, the reflected function \(\widetilde r_e\)
    satisfies
    \[
    \theta_j^-\operatorname{dist}(y,\widetilde M_j)^2
    \le r_*-\widetilde r_e(y)\le
    \theta_j^+\operatorname{dist}(y,\widetilde M_j)^2
    \]
    near the reflected set \(\widetilde M_j\).
\end{itemize}
In addition, \(b(x)=b_*\) on \(U_0\cap\overline\Omega\), where \(b_*>0\),
\(U_0\subset\mathbb R^N\) is open, and \(M\subset U_0\).
\end{assumptionblock}

For a maximum set of positive measure, we use the alternative condition
\begin{assumptionblock}{H_{\rm p}}
\label{ass:plateau}
   \(|M|>0\), \(b|_M\equiv b_*>0\), and there exists a nonempty open
    set \(D\Subset\Omega\) such that \(D\subset M\).
\end{assumptionblock}

\begin{theorem}
    \label{thm:small-diffusion-stability}
    Assume that either \eqref{ass:mixed-maximum} or \eqref{ass:plateau} holds. Then, for all
    sufficiently small \(d>0\), the positive steady state
    \(u_d^*\) is linearly stable.
\end{theorem}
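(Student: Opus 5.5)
The plan is to use the Krein-type criterion of Section~\ref{sec:stability-proofs}, which compares the part of the nonlocal term along \(\psi_1^d\) with the part orthogonal to it. Since \(u_d^*=\alpha_d\psi_1^d\), we write
\[
b\psi_1^d=\beta_d\psi_1^d+w_d,\qquad \beta_d=\langle b\psi_1^d,\psi_1^d\rangle,\qquad w_d\perp\psi_1^d .
\]
Suppose \(\mathcal L_d\eta=z\eta\) with \(\operatorname{Re}z\ge0\) and \(\eta\neq0\). Write \(\eta=s\psi_1^d+\eta^\perp\) and set \(\kappa=\langle c,\eta\rangle\). Projecting the equation gives
\[
zs=-\alpha_d\beta_d\kappa,\qquad (z-A_d)\eta^\perp=-\alpha_d\kappa\,w_d .
\]
If \(\kappa=0\), then \(z\) is an eigenvalue of \(A_d\) with \(\operatorname{Re}z\ge0\). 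This forces \(z=0\), hence \(s=0\), and then \(\eta^\perp=0\) because \(A_d\) is invertible on \((\psi_1^d)^\perp\); so \(\eta=0\), a contradiction. Hence \(\kappa\neq0\). Substituting into \(\kappa=s\langle c,\psi_1^d\rangle+\langle c,\eta^\perp\rangle\) gives the scalar relation
\[
1+\frac{\alpha_d\beta_d\langle c,\psi_1^d\rangle}{z}+\alpha_d\,\bigl\langle c,(z-A_d)^{-1}w_d\bigr\rangle=0 .
\]
Since \(\alpha_d\beta_d\langle c,\psi_1^d\rangle=\lambda_d\beta_d>0\), the second term has nonnegative real part when \(\operatorname{Re}z\ge0\). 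So a contradiction follows once
\[
\alpha_d\,\sup_{\operatorname{Re}z\ge0}\bigl|\langle c,(z-A_d)^{-1}w_d\rangle\bigr|<1,
\]
which is the quantity the proof must control as \(d\to0^+\). We also have \(\lambda_d\to r_*\), \(\alpha_d=\lambda_d/\langle c,\psi_1^d\rangle\), and \(\langle c,\psi_1^d\rangle\) is bounded below by a positive power of \(d\) determined by the concentration scale.

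Case \eqref{ass:mixed-maximum}. Since \(b\equiv b_*\) on \(U_0\supset M\), we have \(w_d=(b-b_*)\psi_1^d-\langle (b-b_*)\psi_1^d,\psi_1^d\rangle\psi_1^d\), and \((b-b_*)\psi_1^d\) is supported in \(\overline\Omega\setminus U_0\). The first step is an Agmon estimate for \(\psi_1^d\). The quadratic bounds on \(r_*-r\), using the even reflection when \(M_j\subset\partial\Omega\), give the local harmonic-oscillator structure, and \(r-\lambda_d\le-\delta<0\) off \(U_0\) for small \(d\). Together these yield \(\|\psi_1^d\|_{L^2(\Omega\setminus U_0)}\le Ce^{-\gamma/\sqrt d}\), so \(\|w_d\|\le Ce^{-\gamma/\sqrt d}\). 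In addition \(\alpha_d\le Cd^{-m}\) for some \(m\), coming from \(\psi_1^d\sim d^{-(N-k_j)/4}\) profiles.

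If \(J=1\), the spectral gap satisfies \(|\mu_2(d)|\ge c\sqrt d\) by the harmonic approximation (transversal oscillator frequencies of order \(\sqrt d\), and the \(O(d)\) tangential Laplacian on \(M_j\) does not matter since the ground state is the only mode near \(0\) up to \(c\sqrt d\) once \(k_j<N\)). Then
\[
\|(z-A_d)^{-1}|_{(\psi_1^d)^\perp}\|\le C d^{-1/2},
\]
and the exponential smallness of \(w_d\) beats all polynomial factors.

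\textbf{Main obstacle.} When \(J\ge2\), \(A_d\) has \(J-1\) further eigenvalues that are only exponentially close to \(0\) (tunnelling), so the crude resolvent bound fails. There I would avoid the global gap. Split \(w_d=\chi w_d+\bigl(w_d-\chi w_d\bigr)\), with \(\chi\) a cutoff equal to \(1\) off \(U_0\) and vanishing near \(M\), and construct \(v=(z-A_d)^{-1}\chi w_d\) by a Grushin/decoupling argument. Off \(U_0\), \(-A_d\ge b\delta\) is uniformly coercive, so a local Dirichlet resolvent there is bounded uniformly in \(\operatorname{Re}z\ge0\). Its commutator error with the cutoff is exponentially small, and by Agmon-weighted estimates it decays exponentially toward \(M\). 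Pairing with the low-lying eigenfunctions \(\psi_k^d\) (\(k\le J\)), which are localized near \(M\), then gives coefficients
\[
\langle \chi w_d,\psi_k^d\rangle=O\bigl(e^{-(\gamma+\gamma')/\sqrt d}\bigr),
\]
and these must dominate the tunnelling denominators \(\mu_k(d)\gtrsim -e^{-S/\sqrt d}\). The term \(w_d-\chi w_d\) is a multiple of \(\psi_1^d\) up to the same type of error. To make the exponents compare correctly, I would choose \(U_0\) small: \(\gamma\) is the Agmon distance from \(M\) to \(\partial U_0\) and \(S\) the distance between components, taken in the metric \((b(r_*-r))^{1/2}\). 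Verifying this exponent comparison is the step I expect to be hardest.

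Case \eqref{ass:plateau}. Now \(\psi_1^d\) is comparable to a Dirichlet ground state on the interior plateau, \(\lambda_d=r_*-O(d)\), the gap is of order \(d\), and \(\alpha_d\) is bounded. Here \(w_d\) is not exponentially small, because \(\psi_1^d\) has an \(O(\sqrt d)\) boundary-layer mass near \(\partial M\) where \(b\ne b_*\). So I would use the same localization argument, with \(D\) playing the role of the region carrying the spectrum near \(0\), together with the fact that \(b-b_*\to0\) at \(\partial M\) to gain the extra smallness needed against the \(d^{-1}\) resolvent bound. This case may require a sharper boundary-layer analysis, and I regard it as the second delicate point.
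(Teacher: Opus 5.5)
\textbf{There is a genuine gap: the criterion you reduce to is too lossy.}

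You announce the Krein-type criterion, but then use a cruder sufficient condition. Write your scalar relation as $1+m_d/z+R_d(z)=0$, with $m_d=\lambda_d\beta_d$ and $R_d(z)=\alpha_d\langle c,(z-A_d)^{-1}w_d\rangle$. You keep only $\operatorname{Re}(m_d/z)\ge0$ and ask for $\sup_{\operatorname{Re}z\ge0}|R_d(z)|<1$. This discards the interplay between the two terms. Since $\inf_{\operatorname{Re}z\ge0}|z-\mu_k(d)|=|\mu_k(d)|$, it also makes everything depend on the low spectrum of $A_d$ on $(\psi_1^d)^\perp$, which you cannot control in the generality of the theorem.

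\begin{itemize}
\item For $J=1$, the claimed bound $|\mu_2(d)|\ge c\sqrt d$ is false once $k_j\ge1$. For a circle of maxima of a radial $r$ in $\mathbb R^2$, the angular modes give $|\mu_2(d)|=O(d)$. Moreover, with $a,b$ merely continuous, no harmonic approximation is available.
\item For $J\ge2$, the exponent comparison you postpone is not implied by \eqref{ass:mixed-maximum}. Consider a tunnelling resonance between two components, where $\psi_1^d\psi_2^d\approx\frac12(\xi_1^2-\xi_2^2)$ for the local ground states $\xi_j$; the hypotheses do not exclude this. There $\alpha_d\langle c,\psi_2^d\rangle$ is of order one when $c$ takes different values on the two components, and $|\mu_2(d)|$ is of order $e^{-S/\sqrt d}$ up to polynomial factors. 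By contrast, $\langle w_d,\psi_2^d\rangle=\int_\Omega(b-b_*)\psi_1^d\psi_2^d$ is typically of order $e^{-2\gamma/\sqrt d}$, where $\gamma$ is the Agmon distance from $M$ to $\operatorname{supp}(b-b_*)$. Nothing forces $2\gamma>S$, and shrinking $U_0$, as you propose, only decreases your $\gamma$. So $|R_d|$ can be exponentially large even though the theorem holds.
\item Under \eqref{ass:plateau}, two test functions supported in $D$ give $|\mu_2(d)|\le Cd$. You would then need $\|w_d\|_{L^2(\Omega)}=o(d)$. This cannot be obtained when $b$ is only continuous and nothing is assumed on $r$ near $\partial M$.
\end{itemize}

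\textbf{The missing idea is the gap-free estimate of Lemma~\ref{thm:krein-criterion}.} Write $f^\perp$ for the component of $f$ orthogonal to $\psi_1^d$, so that $B_d^\perp=\alpha_d w_d$. Multiply your scalar relation by $z$ and use $\langle c,B_d\rangle=m_d+\langle c^\perp,B_d^\perp\rangle$. This gives
\[
2z+\langle c,B_d\rangle+m_d=\bigl\langle c^\perp,\,[I-2z(z-A_d)^{-1}]B_d^\perp\bigr\rangle .
\]
On $(\psi_1^d)^\perp$, the operator $I-2z(z-A_d)^{-1}$ has eigenvalues $(\mu_k(d)+z)/(\mu_k(d)-z)$. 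These have modulus at most $1$ whenever $\operatorname{Re}z\ge0$, however small $|\mu_k(d)|$ is. Taking real parts and using $\langle c,B_d\rangle>0$, an eigenvalue with $\operatorname{Re}z\ge0$ is impossible as soon as $\lambda_d\beta_d\ge\|c\|_{L^2(\Omega)}\,\alpha_d\|w_d\|_{L^2(\Omega)}$.

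With this, the ingredients you already have finish the proof.
\begin{itemize}
\item Under \eqref{ass:mixed-maximum}, the exponential $L^2$ tail of $\psi_1^d$ off $U_0$ and $\alpha_d\le Cd^{-N/4}$ give $\alpha_d\|w_d\|\to0$, while $\lambda_d\beta_d\to r_*b_*$.
\item Under \eqref{ass:plateau}, the energy identity gives $L^2$ concentration of $\psi_1^d$ on $M$, hence $\|w_d\|\to0$ by continuity of $b$ alone. A uniform $H^1$ bound with interpolation keeps $\langle c,\psi_1^d\rangle$ bounded below, so $\alpha_d$ stays bounded.
\end{itemize}
No spectral gap, tunnelling, or boundary-layer analysis is needed.
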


{\color{black}Theorems~\ref{thm:large-diffusion-stability} and
\ref{thm:small-diffusion-stability} give sufficient conditions for linear
stability. Under \eqref{ass:mixed-maximum} or \eqref{ass:plateau},
\(M=\bigcup_{j=1}^{J}M_j\), where
\(M_j=\{x_j\}\Subset\Omega\), \(M_j=\{x_j\}\subset\partial\Omega\),
\(M_j\Subset\Omega\) with \(1\le\dim M_j\le N-1\),
\(M_j\subset\partial\Omega\) with \(1\le\dim M_j\le N-1\),
or \(|M_j|>0\).}

We next consider the case without these constancy assumptions on \(b\).
The spatial variation of \(b\) may also lead to instability.
For fixed sufficiently small \(\varepsilon>0\), let
\(\Omega=(0,1)\). The following problem is parametrized by \(\eta\):
\begin{equation}
\label{eq:hopf-family}
\begin{cases}
\partial_t u
=
\varepsilon^2u''
+
u\left(
a_{\varepsilon,\eta}(x)
-
b_{\varepsilon,\eta}(x)
\displaystyle\int_\Omega c_\varepsilon(y)u(t,y)\,\mathrm{d}y
\right),
& t>0,\ x\in\Omega,\\[2mm]
u'=0,
& t>0,\ x\in\partial\Omega.
\end{cases}
\end{equation}
Here \(\eta\) is the bifurcation parameter and \(c_\varepsilon\) is independent
of \(\eta\). Write the effective local growth profile as
\(q=q_*-V\), where \(q_*\in\mathbb R\) and \(V\) is chosen in
\eqref{eq:counterexample-potential} below. 
Let \((\nu_\varepsilon,\phi_\varepsilon)\) be the principal eigenpair of
\(\varepsilon^2\frac{\mathrm{d}^2}{\mathrm{d}x^2}+q(x)\) with Neumann boundary
condition and \(\|\phi_\varepsilon\|_{L^2(\Omega)}=1\). {\color{black}In the family
\eqref{eq:hopf-family}, the three coefficients satisfy}
\[
a_{\varepsilon,\eta}(x)
-
b_{\varepsilon,\eta}(x)
\int_\Omega c_\varepsilon(y)\phi_\varepsilon(y)\,\mathrm{d}y
=
q(x)-\nu_\varepsilon .
\]

\Needspace{12\baselineskip}
\begin{theorem}
    \label{thm:three-peak-hopf}
    For all sufficiently small \(\varepsilon>0\), there exist smooth
    coefficients \(a_{\varepsilon,\eta}\), \(b_{\varepsilon,\eta}\), and
    \(c_\varepsilon\) in \eqref{eq:hopf-family}, for which
    \(u_\varepsilon^*:=\phi_\varepsilon\) is a positive steady state for
    \(\eta\) near \([0,1]\). There are \(\eta_0\in(0,1)\),
    a critical value \(\eta_\varepsilon=\eta_0+o(1)\) as
    \(\varepsilon\to0^+\), and \(\delta_\varepsilon>0\) such that:
    \begin{enumerate}
        \item\label{item:hopf-crossing}
        The steady state \(u_\varepsilon^*\) is linearly stable for
        \(\eta\in(\eta_\varepsilon-\delta_\varepsilon,\eta_\varepsilon)\)
        and linearly unstable for
        \(\eta\in(\eta_\varepsilon,\eta_\varepsilon+\delta_\varepsilon)\).

        \item\label{item:hopf-branches}
        At \(\eta=\eta_\varepsilon\), the positive steady state
        \(u_\varepsilon^*\) undergoes a Hopf bifurcation.
        The coefficients can be chosen so that the bifurcation is
        supercritical and the bifurcating periodic solutions are orbitally
        asymptotically stable, or subcritical and the bifurcating periodic
        solutions are unstable.
    \end{enumerate}
\end{theorem}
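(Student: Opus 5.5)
We will prove Theorem~\ref{thm:three-peak-hopf} by a semiclassical reduction of $\mathcal L_\varepsilon$ to a $3\times3$ matrix, followed by a standard Hopf analysis. First we fix the effective profile. We take $V\ge0$ smooth on $[0,1]$, with three nondegenerate interior minima $x_1<x_2<x_3$ at which $V(x_j)=0$ and $V''(x_j)>0$, and with $V$ chosen so that the three harmonic-oscillator ground energies $\varepsilon\sqrt{V''(x_j)/2}$ coincide (for instance $V''(x_j)$ all equal). Then the bottom of the spectrum of $-\varepsilon^2\partial_x^2+V$ consists of three eigenvalues separated by exponentially small tunneling gaps $O(e^{-S/\varepsilon})$, while the remaining spectrum lies at distance $\gtrsim\varepsilon$.

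Next we set $u^*_\varepsilon=\phi_\varepsilon$ and prescribe $c_\varepsilon$ and the pair $(a_{\varepsilon,\eta},b_{\varepsilon,\eta})$. We choose $b_{\varepsilon,\eta}>0$ freely, and then define $a_{\varepsilon,\eta}=q-\nu_\varepsilon+b_{\varepsilon,\eta}\int c_\varepsilon\phi_\varepsilon$. With this choice $\phi_\varepsilon$ is a steady state, $A=\varepsilon^2\partial_x^2+q-\nu_\varepsilon$ is independent of $\eta$, and $\mathcal L_\varepsilon=A-(b\phi_\varepsilon)\otimes c$ is a rank-one perturbation.

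Second, we reduce the spectral problem. Let $\Pi$ be the spectral projection of $A$ onto the span of its top three eigenfunctions $\psi_1,\psi_2,\psi_3$. By the Helffer--Sj\"ostrand analysis, these are exponentially close to combinations of the localized harmonic states $g_j$ centered at $x_j$, and $\phi_\varepsilon=\psi_1\approx\sum_j m_jg_j$. We scale the spectral parameter as $z=\varepsilon\zeta$ and choose $b$ and $c$ near $x_j$ as $b\approx\beta_j(\eta)$ and $c\approx\gamma_j$. A Schur complement (Grushin) argument then shows that, for $|z|\le C\varepsilon$ away from the higher spectrum, $z\in\sigma(\mathcal L_\varepsilon)$ if and only if $\det(z-\mathcal M_\varepsilon(z))=0$. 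Here the $3\times3$ matrix is
\[
\mathcal M=\operatorname{diag}(\mu_k)-\bigl(\langle b\phi_\varepsilon,\psi_k\rangle\langle c,\psi_l\rangle\bigr)_{k,l}+O(\varepsilon^\infty).
\]
Equivalently, by \eqref{eq:rank-one-spectrum}, the relevant roots are those of the scalar equation truncated to three modes, with a remainder that is small and analytic in $z$. Eigenvalues with $\operatorname{Re}z\ge -c_0\varepsilon$ can only come from this block, because the tail has a resolvent bound and the rank-one term is controlled.

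Because the tunneling gaps are exponentially small, we work instead at a scale where $\mu_k\approx0$. The dominant matrix is then $-\mathbf b\,\mathbf c^{T}$ in the $g_j$ basis, which has rank one and so gives only the single nonzero eigenvalue $-\mathbf c\cdot\mathbf b$. For a genuine $3\times3$ instability we therefore need the gaps to be comparable to the rank-one term. To achieve this we make the rank-one coefficients tunneling-sized: we take $b$ and $c$ such that $\langle c,\psi_k\rangle$ and $\langle b\phi,\psi_k\rangle$ for $k=2,3$ are of order the gaps. Alternatively, we can make the wells slightly unequal so that the $\mu_k$ are $\varepsilon$-order separated; this is the cleaner choice, and we adopt it. The limit matrix is then
\[
\mathcal M_0(\eta)=\operatorname{diag}(0,-\kappa_2,-\kappa_3)-\mathbf p(\eta)\mathbf r^T,
\]
with explicit $\mathbf p,\mathbf r$ determined by $\beta_j(\eta),\gamma_j$. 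We choose $(\beta_j(\eta))$ so that a complex pair of eigenvalues of $\mathcal M_0(\eta)$ crosses the imaginary axis transversally at $\eta_0\in(0,1)$, while the third eigenvalue stays negative. This is a finite-dimensional Routh--Hurwitz computation with $a_2a_1-a_0$ changing sign.

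Third, we pass to the full operator. Rouch\'e's theorem applied to the analytic determinant gives eigenvalues $\varepsilon(\zeta(\eta)+o(1))$ with a simple crossing at some $\eta_\varepsilon=\eta_0+o(1)$. The rest of the spectrum lies in $\operatorname{Re}z<-c\varepsilon$, which proves part~\eqref{item:hopf-crossing}. For part~\eqref{item:hopf-branches}, we apply the Crandall--Rabinowitz Hopf theorem in $C^{2,\alpha}$, or a center-manifold reduction, for the semilinear parabolic flow. The first Lyapunov coefficient is computed from the quadratic nonlinearity $-b\,u\int c\,u$ using the reduced eigenvectors, and it is reduced to its $\varepsilon\to0$ limit, where it becomes an explicit function of $\gamma_j$. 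We show that this function takes both signs as $\mathbf c$ varies, with $a$ and $b$ held fixed and the crossing condition preserved, which yields the supercritical and subcritical cases.

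We expect the main obstacle to be the quantitative reduction. It requires uniform resolvent control of the complement of $\Pi$ for the non-self-adjoint rank-one perturbation, and it requires error terms small enough compared with the $\varepsilon$-scale of $\mathcal M_0$ that both the transversality and the sign of the Lyapunov coefficient survive. The Agmon estimates for the $g_j$ are needed precisely here.
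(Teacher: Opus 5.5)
There is a genuine gap at the step where you adopt wells that are unequal at order $\varepsilon$. That choice destroys the instability mechanism.

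When the three local harmonic energies differ at order $\varepsilon$, the tunneling amplitude $e^{-S/\varepsilon}$ is negligible against these differences. So the first three eigenfunctions of $H_\varepsilon^N$ do not hybridize: each is exponentially localized in a single well, and $\phi_\varepsilon=\psi_1$ concentrates in the lowest well, say at $x_1$. This already contradicts your earlier description $\phi_\varepsilon\approx\sum_j m_jg_j$ with all $m_j$ of order one, which requires resonant wells.

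Split $\int b\phi_\varepsilon\psi_k$ over a neighborhood of $x_1$ and its complement. The Agmon-type localization then gives $\langle b\phi_\varepsilon,\psi_k\rangle=O(e^{-\theta_0/\varepsilon})$ for $k=2,3$ and every bounded $b$. Hence $\mathbf p=(\int b\phi_\varepsilon^2,0,0)^T+O(e^{-\theta_0/\varepsilon})$. It depends only on $b$ near $x_1$, not on $\beta_2(\eta)$ or $\beta_3(\eta)$.

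Consequently $\mathcal M_0(\eta)=\operatorname{diag}(0,-\kappa_2,-\kappa_3)-\mathbf p\mathbf r^T$ is upper triangular in the limit. Its eigenvalues are $-p_1r_1$, $-\kappa_2$, $-\kappa_3$, all negative, because $p_1=\int b\phi_\varepsilon^2>0$ and $r_1$ is a positive multiple of $\langle c,\phi_\varepsilon\rangle$. No choice of $\beta_j(\eta)$ and $\gamma_j$ can produce a Hopf crossing, so your Routh--Hurwitz step cannot be carried out. Instability requires $b\phi_\varepsilon$ to have order-one components along $\psi_2$ and $\psi_3$ inside the low-lying space. That means $\phi_\varepsilon$ must carry order-one mass in all three wells.

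The option you set aside is the one that works, and it is what the paper does.
\begin{itemize}
\item The wells are resonant: they are exact translates, so the local Dirichlet eigenvalues coincide.
\item Helffer's interaction matrix then gives $\Pi_\varepsilon^NA_\varepsilon\Pi_\varepsilon^N=\tau_\varepsilon G_0+o(\tau_\varepsilon)$, in a basis close to the local ground states, with $\tau_\varepsilon=\varepsilon^{1/2}\mathfrak b_0e^{-S/\varepsilon}$.
\item The coordinates of $\phi_\varepsilon$ in this basis tend to $p=\frac12(1,\sqrt2,1)^T$.
\item The multiplier $b_\eta$ stays of order one but takes different constants on the three wells, while $c_\varepsilon$ is made of size $\tau_\varepsilon$.
\item The reduced matrix is $\tau_\varepsilon(G_0-D_\eta pw^T)$, so the relevant spectral scale is $\tau_\varepsilon$, not $\varepsilon$.
\item The complement is controlled because $\|(b_\eta\phi_\varepsilon)\otimes c_\varepsilon\|\le C\tau_\varepsilon\varepsilon^{-1/4}=o(\varepsilon)$, against the gap $\gamma_0\varepsilon$. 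This makes the Schur correction $O(\tau_\varepsilon^2\varepsilon^{-2})=o(\tau_\varepsilon)$.
\end{itemize}

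Two further points concern your Lyapunov step. First, if you want $a$ and $b$ held fixed while $c$ varies, as you state, then $\langle c_\varepsilon,\phi_\varepsilon\rangle$ must be the same for both choices, since $a_{\varepsilon,\eta}=q-\nu_\varepsilon+b_{\varepsilon,\eta}\int c_\varepsilon\phi_\varepsilon$. The paper enforces this through $(w^{\rm s})^Tp=(w^{\rm u})^Tp$. Second, the paper obtains the two signs of the first Lyapunov coefficient by explicit computation for specific $D$, $w^{\rm s}$, $w^{\rm u}$; your sketch only asserts that both signs occur.
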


In our construction, we reduce the spectral problem for the linearized operator $\mathcal{L}_{d}$
to that of a matrix whose dimension equals the number of maximum
points of \(q\). Proposition~\ref{prop:one-two-mode-reduction}
shows that matrices of this form are stable in dimensions one and two.
We therefore choose \(q\) with three maximum points \(x_1<x_2<x_3\) to construct unstable
positive steady states and Hopf bifurcations. 

The two constructions use the same \(q\), \(a_{\varepsilon,\eta}\), and
\(b_{\varepsilon,\eta}\). Two different choices of \(c_\varepsilon\)
near \(x_2\) and \(x_3\) yield supercritical and subcritical Hopf
bifurcations, while \(c_\varepsilon\) remains unchanged in a fixed
neighborhood of \(x_1\); see Figure~\ref{fig:three-peak-coeff-profiles}.
In this construction, for each fixed sufficiently small \(\varepsilon\),
the values \(c_\varepsilon(x_2)\) and \(c_\varepsilon(x_3)\) satisfy
an affine relation; see the proof of Lemma~\ref{lem:positive-profiles}.
But we do not investigate the occurrence or type of Hopf bifurcation
for the intermediate choices of \(c_\varepsilon\).
\begin{figure}[!htbp]
\centering
\includegraphics[trim=2pt 2pt 2pt 2pt,clip]{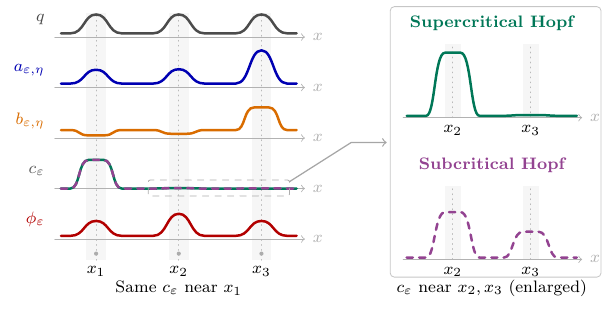}
\caption{Schematic coefficient profiles.}
\label{fig:three-peak-coeff-profiles}
\end{figure}

\section{Proofs of the stability results}
\label{sec:stability-proofs}

{\color{black}In this section, we prove the stability results in
Section~\ref{sec:main-results}. We first establish a spectral criterion, and then
consider large and small diffusion separately.}

Write \(B_d:=b\,u_d^*\). Then
\(\mathcal L_d=A_d-B_d\otimes c\), where
\((B_d\otimes c)\eta
=B_d(x)\int_\Omega c(y)\eta(y)\,\mathrm{d}y\). Let
\(0=\mu_1(d)>\mu_2(d)\ge\mu_3(d)\ge\cdots\) be the eigenvalues of \(A_d\),
and complete \(\psi_1^d\) to an orthonormal basis
\(\{\psi_k^d\}_{k\ge1}\) of eigenfunctions.

{\color{black}Motivated by the spectral results of Coville and
Girardin~\cite{CovilleGirardin2025}, we obtain the following stability
criterion for the present operator with a self-adjoint diffusion part.}

\begin{lemma}
    \label{thm:krein-criterion}
    The spectrum \(\sigma(\mathcal L_d)\) lies in
    \(\{\lambda\in\mathbb C:\operatorname{Re}\lambda<0\}\) provided that
    \begin{equation}
    \begin{aligned}
    &\langle c,B_d\rangle
    +\langle c,\psi_1^d\rangle\langle B_d,\psi_1^d\rangle
    >
    \big\|c-\langle c,\psi_1^d\rangle\psi_1^d\big\|_{L^2(\Omega)}
    \big\|B_d-\langle B_d,\psi_1^d\rangle\psi_1^d\big\|_{L^2(\Omega)}.
    \end{aligned}
    \label{eq:krein-condition}
    \end{equation}
\end{lemma}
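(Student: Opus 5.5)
The approach is to show that every spectral value of \(\mathcal L_d\) is an eigenvalue, and then to rule out eigenvalues in the closed right half-plane by testing the eigenvalue equation against the basis \(\{\psi_k^d\}\) and estimating with Cauchy--Schwarz. The key device is a disc estimate for \(\lambda/(\lambda-\mu_k)\); it produces exactly the left-hand side of \eqref{eq:krein-condition}.

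\textbf{Reduction to eigenvalues.} The operator \(A_d\) with Neumann condition has compact resolvent, and \(B_d\otimes c\) is bounded of rank one. Hence \(\mathcal L_d=A_d-B_d\otimes c\) also has compact resolvent, and \(\sigma(\mathcal L_d)\) consists only of eigenvalues; alternatively one can use \eqref{eq:rank-one-spectrum}. So suppose \(\mathcal L_d\varphi=\lambda\varphi\) with \(\varphi\neq0\) and \(\operatorname{Re}\lambda\ge0\), and put \(s=\langle c,\varphi\rangle\). Write \(b_k=\langle B_d,\psi_k^d\rangle\) and \(c_k=\langle c,\psi_k^d\rangle\). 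Since \(b,c,\psi_1^d>0\) and \(u_d^*=\alpha_d\psi_1^d\) with \(\alpha_d>0\), we have \(b_1>0\) and \(c_1>0\).

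\textbf{Degenerate cases.} If \(s=0\), then \(A_d\varphi=\lambda\varphi\). So \(\lambda\) is real and \(\lambda\le0\), which forces \(\lambda=0\). Then \(\varphi\) is a multiple of \(\psi_1^d\), and \(s=c_1\langle\varphi,\psi_1^d\rangle\neq0\), a contradiction.

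Now let \(s\neq0\) and normalize \(s=1\). If \(\lambda=0\), pair the equation with \(\psi_1^d\). The self-adjointness of \(A_d\) and \(A_d\psi_1^d=0\) give \(0=-b_1 s\), again a contradiction.

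\textbf{Main case.} So \(\lambda\notin\sigma(A_d)\) (recall \(\mu_k<0\) for \(k\ge2\)). Pairing with \(\psi_k^d\) gives \(\langle\varphi,\psi_k^d\rangle=b_k/(\mu_k-\lambda)\). Summing \(c_k\langle\varphi,\psi_k^d\rangle\) over \(k\), which equals \(s=1\), and multiplying by \(-\lambda\) yields
\[
\lambda+b_1c_1+\sum_{k\ge2}b_kc_k\,w_k=0,\qquad w_k:=\frac{\lambda}{\lambda-\mu_k}.
\]

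\textbf{The key estimate.} This is the step where a crude bound \(|w_k|\le1\) loses a factor of two, so a finer estimate is needed. Write \(w_k=(1-\mu_k/\lambda)^{-1}\). Since \(\mu_k<0\) and \(\operatorname{Re}\lambda\ge0\), we have \(\operatorname{Re}(\mu_k/\lambda)\le0\), so \(1-\mu_k/\lambda\) lies in the half-plane \(\{\operatorname{Re}\ge1\}\). Inversion maps this half-plane onto the closed disc \(|w-\tfrac12|\le\tfrac12\). Hence \(w_k=\tfrac12+e_k\) with \(|e_k|\le\tfrac12\).

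\textbf{Conclusion.} Take real parts and use \(\sum_{k\ge2}b_kc_k=\langle c,B_d\rangle-b_1c_1\) (Parseval). This gives
\[
0=\operatorname{Re}\lambda+\tfrac12\bigl(\langle c,B_d\rangle+b_1c_1\bigr)+\operatorname{Re}\sum_{k\ge2}b_kc_ke_k .
\]
By Cauchy--Schwarz and Parseval,
\[
\Bigl|\sum_{k\ge2}b_kc_ke_k\Bigr|\le\tfrac12\big\|c-c_1\psi_1^d\big\|_{L^2(\Omega)}\big\|B_d-b_1\psi_1^d\big\|_{L^2(\Omega)}.
\]
Combined with \(\operatorname{Re}\lambda\ge0\), this gives
\[
\langle c,B_d\rangle+b_1c_1\le\big\|c-c_1\psi_1^d\big\|_{L^2(\Omega)}\big\|B_d-b_1\psi_1^d\big\|_{L^2(\Omega)},
\]
which contradicts \eqref{eq:krein-condition}. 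Therefore every \(\lambda\in\sigma(\mathcal L_d)\) satisfies \(\operatorname{Re}\lambda<0\).
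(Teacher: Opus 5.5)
Your proof is correct and follows essentially the same route as the paper. The paper derives the same scalar equation from $1+\langle c,(z-A_d)^{-1}B_d\rangle=0$, splits off the $\psi_1^d$ component, and uses that $I-2z(z-A_d)^{-1}$ is a contraction on $(\psi_1^d)^\perp$ for $\operatorname{Re}z\ge0$; your disc estimate is exactly this contraction written in eigenfunction coordinates, since $e_k=w_k-\tfrac12=-\tfrac12\,\frac{\mu_k+\lambda}{\mu_k-\lambda}$ is $-\tfrac12$ times the Cayley multiplier acting on $\psi_k^d$.
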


\begin{proof}
   We first show that \(0\notin\sigma(\mathcal L_d)\). If
    \((A_d-B_d\otimes c)u=0\), taking the inner product with
    \(\psi_1^d\) gives
    \begin{equation}
    \langle B_d,\psi_1^d\rangle\langle c,u\rangle=0.
    \label{eq:zero-eigenvalue-projection}
    \end{equation}
    Since \(\langle B_d,\psi_1^d\rangle>0\),
    \eqref{eq:zero-eigenvalue-projection} implies \(\langle c,u\rangle=0\),
    and consequently \(A_du=0\). Hence \(u=\theta\psi_1^d\) for some
    \(\theta\in\mathbb C\). As \(\langle c,\psi_1^d\rangle>0\), the equality
    \(\theta\langle c,\psi_1^d\rangle=0\) gives \(u=0\).
    Thus \(0\) is not an eigenvalue of \(\mathcal L_d\).

    {\color{black}Suppose, by contradiction, that
    \(z\in\sigma(\mathcal L_d)\) and \(\operatorname{Re}z\ge0\).}
    Since \(A_d\) has compact resolvent and \(B_d\otimes c\) is bounded,
    \(\mathcal L_d=A_d-B_d\otimes c\) also has compact resolvent.
    {\color{black}Thus, \(z\) is an eigenvalue, and the above argument implies \(z\ne0\).}
    The operator \(A_d\) is self-adjoint and nonpositive, with
    \(\sigma(A_d)\setminus\{0\}\subset(-\infty,0)\).
    It follows that \(z\in\rho(A_d)\). For a corresponding eigenfunction \(u\ne0\), one has
    \[
    (z-A_d)u=-B_d\langle c,u\rangle_{L^2(\Omega)}.
    \]
    {\color{black}We note that \(\langle c,u\rangle_{L^2(\Omega)}\) cannot vanish, since
    otherwise \((z-A_d)u=0\). Applying \((z-A_d)^{-1}\), taking the inner
    product with \(c\), and dividing by this scalar, we obtain}
    \begin{equation}
    1+\langle c,(z-A_d)^{-1}B_d\rangle_{L^2(\Omega)}=0.
    \label{eq:rank-one-resolvent}
    \end{equation}

    Decompose \(B_d\) and \(c\) along \(\psi_1^d\) and its orthogonal
    complement. Since \((z-A_d)^{-1}\psi_1^d=z^{-1}\psi_1^d\),
    equation \eqref{eq:rank-one-resolvent} becomes
    \begin{equation}
    \begin{aligned}
    0={}&1+\frac{\langle B_d,\psi_1^d\rangle\langle c,\psi_1^d\rangle}{z}
    +\Big\langle c-\langle c,\psi_1^d\rangle\psi_1^d,
    (z-A_d)^{-1}\bigl(B_d-\langle B_d,\psi_1^d\rangle\psi_1^d\bigr)\Big\rangle.
    \end{aligned}
    \label{eq:rank-one-split}
    \end{equation}
    The identity
    \begin{equation}
    \begin{aligned}
    \langle c,B_d\rangle
    ={}&\langle c,\psi_1^d\rangle\langle B_d,\psi_1^d\rangle
   +\Big\langle c-\langle c,\psi_1^d\rangle\psi_1^d,
    B_d-\langle B_d,\psi_1^d\rangle\psi_1^d\Big\rangle
    \end{aligned}
    \label{eq:principal-orthogonal-identity}
    \end{equation}
    together with \eqref{eq:rank-one-split} gives
    \begin{equation}
    \begin{aligned}
    &2z+\langle c,B_d\rangle
    +\langle c,\psi_1^d\rangle\langle B_d,\psi_1^d\rangle
    \quad=\Big\langle c-\langle c,\psi_1^d\rangle\psi_1^d,
    [I-2z(z-A_d)^{-1}]
    \bigl(B_d-\langle B_d,\psi_1^d\rangle\psi_1^d\bigr)\Big\rangle.
    \end{aligned}
    \label{eq:krein-identity}
    \end{equation}

    {\color{black}For every \(k\ge2\), we have}
    \[
    [I-2z(z-A_d)^{-1}]\psi_k^d
    =
    \frac{\mu_k(d)+z}{\mu_k(d)-z}\psi_k^d
    \]
    and
    \[
    |\mu_k(d)-z|^2-|\mu_k(d)+z|^2
    =-4\mu_k(d)\operatorname{Re}z\ge0.
    \]
    {\color{black}By Parseval's identity, we obtain}
    \begin{equation}
    \|[I-2z(z-A_d)^{-1}]f\|_{L^2(\Omega)}
    \le\|f\|_{L^2(\Omega)},
    \qquad f\perp\psi_1^d.
    \label{eq:cayley-contraction}
    \end{equation}
    Apply \eqref{eq:cayley-contraction} to
    \(f=B_d-\langle B_d,\psi_1^d\rangle\psi_1^d\). By Cauchy--Schwarz, the
    modulus of the right-hand side of \eqref{eq:krein-identity} is at most
    \(\|c-\langle c,\psi_1^d\rangle\psi_1^d\|_{L^2(\Omega)}
    \|B_d-\langle B_d,\psi_1^d\rangle\psi_1^d\|_{L^2(\Omega)}\).
    On the other hand, \(\operatorname{Re}z\ge0\) and
    \eqref{eq:krein-condition} imply that the real part of its left-hand
    side is strictly larger than this quantity. This contradiction proves
    \(\sigma(\mathcal L_d)\subset\{z:\operatorname{Re}z<0\}\).
\end{proof}

\subsection{Large-diffusion stability}
\label{sec:large-diffusion}

{\color{black}In this subsection, we prove that \(u_d^*\) is linearly stable for all sufficiently large \(d\).}

\begin{lemma}
    \label{lem:large-diffusion-gap}
    There exist constants \(\gamma_0>0\) and \(d_0>0\) such that
    \[
    -\mu_2(d)\ge\gamma_0d,
    \qquad d>d_0 .
    \]
\end{lemma}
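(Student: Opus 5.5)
We prove the bound through the variational characterization of \(\mu_2(d)\), using \(\psi_1^d\) as the known principal eigenfunction. Since \(A_d=d\Delta+(a-\lambda_d b)\) is self-adjoint with \(\mu_1(d)=0\) and eigenfunction \(\psi_1^d\),
\[
\mu_2(d)=\sup\Big\{-d\|\nabla\varphi\|_2^2+\int_\Omega(a-\lambda_d b)\varphi^2:\ \varphi\in H^1(\Omega),\ \|\varphi\|_2=1,\ \varphi\perp\psi_1^d\Big\}.
\]
We split the argument into two ingredients: a uniform bound on the zeroth-order coefficient, and a Poincaré-type inequality on \((\psi_1^d)^\perp\) that improves to the Neumann spectral gap as \(d\to\infty\).

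\emph{Step 1 (bounded potential).} First show that \(\lambda_d\) is bounded as \(d\to\infty\). By \eqref{eq:principal-eigenpair}, \(\lambda_d\) is the principal eigenvalue of the weighted problem. Its Rayleigh quotient is
\[
\lambda_d=\sup_\varphi\frac{-d\|\nabla\varphi\|^2+\int a\varphi^2}{\int b\varphi^2}.
\]
This gives \(0<\lambda_d\le \max a/\min b\). Positivity is stated in the paper; taking \(\varphi\equiv1\) also gives the lower bound \(\lambda_d\ge\int a/\int b>0\). Hence \(\|a-\lambda_d b\|_\infty\le K\) for a constant \(K\) independent of \(d\).

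\emph{Step 2 (\(\psi_1^d\) is nearly constant).} Next show \(\psi_1^d\to|\Omega|^{-1/2}\) in \(L^2\) as \(d\to\infty\). Multiply the equation by \(\psi_1^d\) and integrate. This gives
\[
d\|\nabla\psi_1^d\|^2=\int(a-\lambda_d b)(\psi_1^d)^2\le K,
\]
so \(\|\nabla\psi_1^d\|_2^2\le K/d\). The Neumann Poincaré inequality, together with \(\|\psi_1^d\|_2=1\) and \(\psi_1^d>0\), then yields \(\|\psi_1^d-|\Omega|^{-1/2}\|_2\to0\) (write \(\epsilon_d\) for this quantity).

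\emph{Step 3 (gap estimate).} Let \(\kappa_2>0\) be the first nonzero Neumann eigenvalue of \(-\Delta\). For \(\varphi\perp\psi_1^d\) with \(\|\varphi\|_2=1\), we have \(|\bar\varphi|\,|\Omega|^{1/2}=|\langle\varphi,|\Omega|^{-1/2}\rangle|=|\langle\varphi,|\Omega|^{-1/2}-\psi_1^d\rangle|\le\epsilon_d\). Hence
\[
\|\nabla\varphi\|^2\ge\kappa_2\|\varphi-\bar\varphi\|^2\ge\kappa_2(1-\epsilon_d^2).
\]
Plugging this into the Rayleigh quotient gives \(\mu_2(d)\le-d\kappa_2(1-\epsilon_d^2)+K\). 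For \(d\) large, \(\epsilon_d^2\le1/4\) and \(K\le d\kappa_2/4\), so \(-\mu_2(d)\ge\kappa_2 d/2\). This proves the lemma with \(\gamma_0=\kappa_2/2\) and a suitable \(d_0\).

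The only mildly delicate point is Step 2, which requires the uniform bound on \(\lambda_d\) from Step 1. Once that bound is in hand, the rest is routine.
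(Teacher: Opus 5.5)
Your proof is correct. It uses the same ingredients as the paper's proof: a uniform bound on \(a-\lambda_d b\), the Neumann Poincar\'e inequality, convergence of \(\psi_1^d\) to the constant \(|\Omega|^{-1/2}\), and orthogonality to \(\psi_1^d\) forcing a small mean. You organize them differently, though.

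The paper argues by contradiction. It assumes \(-\mu_2(d_n)=o(d_n)\) along a sequence and takes normalized second eigenfunctions \(v_n\perp\psi_1^{d_n}\). The energy identity then gives \(\|\nabla v_n\|_{L^2(\Omega)}\to0\), and a subsequence satisfies \(v_n\to\kappa\) in \(L^2(\Omega)\) with \(|\kappa|=|\Omega|^{-1/2}\). This contradicts orthogonality, using \(\psi_1^{d_n}\to|\Omega|^{-1/2}\) in \(H^1(\Omega)\), which the paper imports from the argument of \cite[Theorem~2]{AllenBolkerLouNevai2008}.

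You instead bound the Rayleigh quotient on \((\psi_1^d)^\perp\) directly. You also derive the convergence of \(\psi_1^d\) yourself from \(d\|\nabla\psi_1^d\|_{L^2(\Omega)}^2=\int_\Omega(a-\lambda_d b)(\psi_1^d)^2\le K\). This gives the explicit rate \(\|\psi_1^d-\overline{\psi_1^d}\|_{L^2(\Omega)}^2\le K/(\kappa_2 d)\), and \(L^2\) convergence is all your Step 3 needs.

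Your route buys a self-contained, quantitative statement. Any \(\gamma_0<\kappa_2\) works, and \(d_0\) is computable from \(K\) and \(\kappa_2\). Your lower bound \(\lambda_d\ge\int_\Omega a/\int_\Omega b\), obtained from \(\varphi\equiv1\), also makes the cited positivity of \(\lambda_d\) unnecessary, since only boundedness of \(\lambda_d\) is used. The paper's compactness argument is somewhat shorter to write but gives no information on \(\gamma_0\) or \(d_0\).
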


\begin{proof}
    {\color{black}Suppose, by contradiction, that there is a sequence \(d_n\to\infty\) as \(n\to\infty\)}
    such that \(-\mu_2(d_n)=o(d_n)\). Let \(v_n\) be a real-valued,
    \(L^2\)-normalized eigenfunction
    of \(A_{d_n}\) corresponding to \(\mu_2(d_n)\). Then
    \[
    \langle v_n,\psi_1^{d_n}\rangle_{L^2(\Omega)}=0,
    \qquad
    \|v_n\|_{L^2(\Omega)}=1,
    \]
    and
    \[
    d_n\int_\Omega|\nabla v_n(x)|^2\,\mathrm{d}x
    -
    \int_\Omega\bigl(a(x)-\lambda_{d_n}b(x)\bigr)v_n^2(x)\,\mathrm{d}x
    =
    -\mu_2(d_n)
    =
    o(d_n).
    \]
    The principal eigenvalue in \eqref{eq:principal-eigenpair} satisfies
    \begin{equation}
    \lambda_d
    =
    \sup_{0\ne\psi\in H^1(\Omega)}
    \frac{\displaystyle
        \int_\Omega a(x)\psi^2(x)\,\mathrm{d}x
        -d\int_\Omega|\nabla\psi(x)|^2\,\mathrm{d}x}
    {\displaystyle\int_\Omega b(x)\psi^2(x)\,\mathrm{d}x}.
    \label{eq:weighted-variational}
    \end{equation}

    Formula \eqref{eq:weighted-variational} and assumption
    \eqref{eq:positive-mean-stability} give \(0<\lambda_d\le r_*\).
    Since \(a\) and \(b\) are bounded, there is a constant \(C_1>0\),
    independent of \(n\), such that
    \(\|a-\lambda_{d_n}b\|_{L^\infty(\Omega)}\le C_1\).
    It follows from the above energy identity that
    \[
    \int_\Omega|\nabla v_n(x)|^2\,\mathrm{d}x
    \le
    \frac{-\mu_2(d_n)+C_1}{d_n}\to0
    \qquad(n\to\infty).
    \]
    Set \(\overline v_n:=|\Omega|^{-1}\int_\Omega v_n(x)\,\mathrm{d}x\).
    Poincar\'e's inequality gives
    \(\|v_n-\overline v_n\|_{L^2(\Omega)}\to0\) as \(n\to\infty\). Since
    \(\|v_n\|_{L^2(\Omega)}=1\), after passing to a subsequence one has
    \(\overline v_n\to\kappa\) as \(n\to\infty\), with \(|\kappa|=|\Omega|^{-1/2}\).
    Hence \(v_n\to\kappa\) in \(L^2(\Omega)\) as \(n\to\infty\). On the other hand,
    the argument in \cite[Theorem~2]{AllenBolkerLouNevai2008}, applied
    to \eqref{eq:principal-eigenpair}, gives
    \(\psi_1^{d_n}\to|\Omega|^{-1/2}\) in \(H^1(\Omega)\) as \(n\to\infty\).
    Together with \(\langle v_n,\psi_1^{d_n}\rangle_{L^2(\Omega)}=0\),
    this gives
    \[
    0
    =
    \lim_{n\to\infty}\langle v_n,\psi_1^{d_n}\rangle_{L^2(\Omega)}
    =
    \kappa|\Omega|^{1/2},
    \]
    a contradiction.
\end{proof}

\begin{proof}[Proof of Theorem~\ref{thm:large-diffusion-stability}]
    {\color{black}Suppose, by contradiction, that \(z\) is an eigenvalue of}
    \(\mathcal L_d\) with \(\operatorname{Re}z\ge0\).
    The proof of Lemma~\ref{thm:krein-criterion} shows that \(0\) is not
    an eigenvalue, so \(z\ne0\).
    Equation~\eqref{eq:rank-one-split} gives
    \begin{equation}
    1+\frac{m_d}{z}+R_d(z)=0,
    \label{eq:large-resolvent-scalar}
    \end{equation}
    where
    \begin{equation}
    \begin{aligned}
    m_d&:=\langle B_d,\psi_1^d\rangle\langle c,\psi_1^d\rangle,\quad
    R_d(z):=\bigl\langle
    c-\langle c,\psi_1^d\rangle\psi_1^d,
    (z-A_d)^{-1}(B_d-\langle B_d,\psi_1^d\rangle\psi_1^d)
    \bigr\rangle.
    \end{aligned}
    \label{eq:large-resolvent-coefficients}
    \end{equation}
    Since \(\operatorname{Re}z\ge0\), Lemma~\ref{lem:large-diffusion-gap} gives
    \(|z-\mu_k(d)|\ge-\mu_k(d)\ge-\mu_2(d)\ge\gamma_0d\)
    for every \(k\ge2\) and \(d>d_0\). Consequently,
    \[
    \|(z-A_d|_{(\psi_1^d)^\perp})^{-1}\|
    \le
    \frac{1}{\gamma_0d},
    \qquad d>d_0.
    \]

    Applying the argument in \cite[Theorem~2]{AllenBolkerLouNevai2008}
    to \eqref{eq:principal-eigenpair}, we have
    \(\psi_1^d\to|\Omega|^{-1/2}\) in \(H^1(\Omega)\) as \(d\to\infty\).
    Consequently,
    \[
    \langle c,\psi_1^d\rangle\longrightarrow
    |\Omega|^{-1/2}\int_\Omega c(x)\,\mathrm{d}x>0
    \qquad(d\to\infty),
    \]
    and
    \[
    \int_\Omega b(x)\bigl(\psi_1^d(x)\bigr)^2\,\mathrm{d}x
    \longrightarrow
    |\Omega|^{-1}\int_\Omega b(x)\,\mathrm{d}x>0
    \qquad(d\to\infty).
    \]
    Since \(B_d=\lambda_d b\psi_1^d/\langle c,\psi_1^d\rangle\), we have
    \(m_d=\lambda_d\int_\Omega b(x)\bigl(\psi_1^d(x)\bigr)^2\,\mathrm{d}x\).
    Moreover, the argument in \cite[Theorem~2]{AllenBolkerLouNevai2008} implies
    \begin{equation}
    \lambda_d\to
    \int_\Omega a(x)\,\mathrm{d}x\big/
    \int_\Omega b(x)\,\mathrm{d}x
    \qquad(d\to\infty).
    \label{eq:large-diffusion-eigenvalue-limit}
    \end{equation}
    Together with \(\|\psi_1^d\|_{L^2(\Omega)}=1\),
    \eqref{eq:large-diffusion-eigenvalue-limit},
    and \eqref{eq:positive-mean-stability}, these limits imply that there exist
    \(d_1\ge d_0\), \(m_0>0\), and \(C_1,C_2>0\) such that, for every
    \(d>d_1\), one has
    \[
    m_d\ge m_0,
    \qquad
    \|B_d\|_{L^2(\Omega)}\le C_1,
    \qquad\text{and}\qquad
    \|c\|_{L^2(\Omega)}\le C_2.
    \]
    Orthogonal projection does not increase the norm. Consequently,
    \eqref{eq:large-resolvent-coefficients} and the resolvent bound above give
    \[
    |R_d(z)|
    \le
    \frac{1}{\gamma_0d}
    \|c\|_{L^2(\Omega)}
    \|B_d\|_{L^2(\Omega)}
    \le
    \frac{C_3}{d},
    \]
    where \(C_3:=C_1C_2/\gamma_0\).
    Choose \(d_2\ge d_1\) so that \(C_3/d_2\le1/2\). For \(d>d_2\),
    \(|R_d(z)|\le1/2\), and therefore
    \begin{equation}
    \operatorname{Re}\frac{1}{1+R_d(z)}
    =
    \frac{1+\operatorname{Re}R_d(z)}{|1+R_d(z)|^2}
    \ge
    \frac{2}{9}>0.
    \label{eq:large-reciprocal-positive}
    \end{equation}
    Multiplying \eqref{eq:large-resolvent-scalar} by \(z\) gives
    \(z=-m_d(1+R_d(z))^{-1}\). Since \(m_d>0\),
    \eqref{eq:large-reciprocal-positive} implies \(\operatorname{Re}z<0\),
    a contradiction. As \(\mathcal L_d\) has compact resolvent, this proves
    \(\sigma(\mathcal L_d)\subset\{z:\operatorname{Re}z<0\}\).
\end{proof}

{\color{black}Next, we consider the small-diffusion case. For a lower-dimensional maximum
set, we first estimate the concentration of the principal eigenfunction. For
a maximum set of positive measure, we obtain the concentration from an energy
identity. In both cases, we verify the criterion in
Lemma~\ref{thm:krein-criterion}.}

\subsection{Small-diffusion concentration and stability}
\label{sec:small-diffusion}

{\color{black}Near the boundary components of \(M\), we choose finitely many
\(C^2\) charts \(x=\Theta(y)\) mapping half-balls in \(\{y_N>0\}\)
onto local portions of \(\Omega\). In each chart, let
\(\widetilde r_e(y',y_N)=r(\Theta(y',|y_N|))\). The local image
\(\widetilde M_j\) lies in \(\{y_N=0\}\), and the chart bounds can be
chosen uniformly over the finite collection.}

Recall the \(L^2\)-normalized eigenfunction \(\psi_1^d\) fixed in \eqref{eq:principal-eigenpair}. For
the pointwise estimates below, define
\[
\varphi_d:=\frac{\psi_1^d}{\|\psi_1^d\|_{L^\infty(\Omega)}}
\]
and choose \(y_d\in\overline\Omega\) such that \(\varphi_d(y_d)=1\). Thus
\(\|\varphi_d\|_{L^\infty(\Omega)}=1\) and
\(\psi_1^d=\varphi_d/\|\varphi_d\|_{L^2(\Omega)}\).

\begin{lemma}
    \label{lem:mixed-maximum-concentration}
    Assume \eqref{ass:mixed-maximum}. Then there are constants
    \(K_\lambda,K_y,K_1,K_2,d_0>0\) such that, for
    \(0<d<d_0\), the following assertions hold.
    \begin{enumerate}
        \item\label{item:concentration-eigenvalue} \(0\le r_*-\lambda_d\le K_\lambda d^{1/2}\).
        
        \item\label{item:concentration-maximum} \(\operatorname{dist}(y_d,M)\le K_y d^{1/4}\).
        
        \item\label{item:concentration-mass} \(\|\varphi_d\|_{L^2(\Omega)}^2\ge K_2d^{N/4}\) and
        \(\|\varphi_d\|_{L^1(\Omega)}\ge K_1d^{N/4}\).
        
        \item\label{item:concentration-tail} For every relatively open neighborhood \(U\) of \(M\) in
        \(\overline\Omega\), there exist constants \(K_U,\kappa_U>0\) such that
        \[
        \|\psi_1^d\|_{L^2(\Omega\setminus U)}^2
        \le K_Ue^{-\kappa_Ud^{-1/2}}.
        \]
    \end{enumerate}
\end{lemma}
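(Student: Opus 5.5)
The plan is to work with the weighted Rayleigh quotient \eqref{eq:weighted-variational} and the equation $d\Delta\varphi_d+(a-\lambda_d b)\varphi_d=0$. Writing $a-\lambda_d b=b(r-\lambda_d)$ turns this into a Schr\"odinger-type problem $-d\Delta\varphi_d+b(\lambda_d-r)\varphi_d=0$ with positive weight $b$. The lower bound in (1) comes from the test function $\psi\equiv$ const, together with the upper bound $\lambda_d\le r_*$, which is immediate from \eqref{eq:weighted-variational}. For the reverse estimate $r_*-\lambda_d\le K_\lambda d^{1/2}$, I will insert a localized test function into \eqref{eq:weighted-variational}. Pick a point $x_0\in M_j$. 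If $M_j$ is a boundary component, first flatten and use the even reflection; the relevant case is then a half-ball. Take $\psi(x)=\chi\bigl((x-x_0)/d^{1/4}\bigr)$ with $\chi$ a fixed cutoff. By the upper quadratic bound $r_*-r\le\theta_j^+\operatorname{dist}(x,M_j)^2\le\theta_j^+|x-x_0|^2$,
\[
\frac{\int b(r_*-r)\psi^2+d\int|\nabla\psi|^2}{\int b\psi^2}\le C d^{1/2}+C d\cdot d^{-1/2}=Cd^{1/2},
\]
and this gives (1). Using $\int a\psi^2-d\int|\nabla\psi|^2=r_*\int b\psi^2-\bigl(\int b(r_*-r)\psi^2+d\int|\nabla\psi|^2\bigr)$ gives $\lambda_d\ge r_*-Cd^{1/2}$. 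Here the constancy $b=b_*$ near $M$ is not even needed.

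For (2), I evaluate the equation at the maximum point $y_d$ of $\varphi_d$. If $y_d$ is interior, then $\Delta\varphi_d(y_d)\le0$. If $y_d\in\partial\Omega$, the Neumann condition and the Hopf lemma (or the reflected chart) give the same sign information. Hence $a(y_d)-\lambda_d b(y_d)\ge0$, i.e.\ $r(y_d)\ge\lambda_d\ge r_*-K_\lambda d^{1/2}$. Combining this with the lower quadratic bound $r_*-r\ge\theta_j^-\operatorname{dist}(\cdot,M_j)^2$ near each $M_j$, and with the fact that $r_*-r\ge c_0>0$ away from $\bigcup V_j$ by compactness, yields $\operatorname{dist}(y_d,M)\le K_yd^{1/4}$. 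For regularity, since $a,b$ are only continuous, I will use the weak maximum principle in the form: $\varphi_d$ is a subsolution on the set where $r<\lambda_d$. Equivalently, if $r(y_d)<\lambda_d$, then a small ball around $y_d$ on which $r<\lambda_d$ contradicts the strong maximum principle applied to $-d\Delta\varphi_d+b(\lambda_d-r)\varphi_d=0$.

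For (3), I use a lower bound on $\varphi_d$ in a ball of radius $\rho d^{1/2}$ around $y_d$. In the rescaled variable $x=y_d+d^{1/2}\xi$, the function $w(\xi)=\varphi_d(y_d+d^{1/2}\xi)$ satisfies $\Delta w+b(r-\lambda_d)w=0$ with bounded coefficient, $0\le w\le1$, and $w(0)=1$. Harnack (with reflection at the boundary) gives $w\ge c$ on the unit ball, so $\|\varphi_d\|_{L^1},\|\varphi_d\|_{L^2}^2\ge c\,d^{N/2}$. This is weaker than the $d^{N/4}$ claimed. To get $d^{N/4}$ I must work on the natural scale $d^{1/4}$ instead. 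The coefficient there is $d^{-1/2}b(r-\lambda_d)$, which by (1) and the quadratic bound is $O(1)$ on the ball $|x-y_d|\le R d^{1/4}$, since $|r-\lambda_d|\le Cd^{1/2}(1+R^2)$ by (1)–(2). So I will rescale with $x=y_d+d^{1/4}\xi$, use Harnack on $|\xi|\le1$ to get $\varphi_d\ge c$ there, and conclude (3). I expect this scaling bookkeeping to be the main obstacle, particularly near boundary components, where the chart distorts the Laplacian by a uniformly elliptic operator with bounded coefficients. Harnack still applies there, and the Neumann reflection handles the boundary.

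For (4), I use an Agmon/comparison estimate. On $\Omega\setminus U$ we have $r\le r_*-2\delta$, so by (1) we get $b(\lambda_d-r)\ge\delta b_{\min}$ for small $d$. Take a neighborhood $U'\Subset U$ with the same property on $\Omega\setminus U'$, and compare $\varphi_d\le1$ with the supersolution $\cosh$-type barrier $\exp(-\kappa d^{-1/2}\operatorname{dist}(x,U'))$ (smoothed, and made Neumann-compatible via the reflected charts). Alternatively, test the equation with $e^{2\kappa g/d^{1/2}}\varphi_d$ for a Lipschitz cutoff $g$ to get a weighted $L^2$ Agmon bound. Either route gives $\varphi_d\le Ce^{-\kappa d^{-1/2}}$ on $\Omega\setminus U$. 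Dividing by $\|\varphi_d\|_{L^2}^2\ge K_2d^{N/4}$ from (3) absorbs the polynomial factor into the exponential after shrinking $\kappa_U$, and this yields (4).
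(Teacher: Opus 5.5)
Your proposal is correct and follows the paper's proof essentially step by step: a $d^{1/4}$-scaled test function in the weighted Rayleigh quotient for (1), the maximum principle and Hopf lemma at $y_d$ combined with the quadratic lower bound for (2), Harnack at scale $d^{1/4}$ with Neumann reflection for (3), and exponential decay off $M$ divided by the mass bound from (3) for (4), where the paper cites the localization argument of Lam--Lou instead of your explicit barrier or Agmon estimate. The only slips are cosmetic: the constant test function plays no role in (1), since $r_*-\lambda_d\ge0$ is just $\lambda_d\le r_*$, and your Agmon variant gives a weighted $L^2$ bound rather than the pointwise one you claim, which is still all that (4) needs.
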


\begin{proof}
    Set \(b_-:=\min\limits_{x\in\overline\Omega}b(x)>0\).

    \noindent\textup{(\ref{item:concentration-eigenvalue})}
    Since \(a=br\) and \(r\le r_*\), formula
    \eqref{eq:weighted-variational} gives \(\lambda_d\le r_*\).
    For the lower bound, choose one component \(M_j\). Suppose first that
    \(M_j\Subset\Omega\). Fix \(x_j\in M_j\), take
    \(\eta\in C_c^\infty(B_1(0))\), \(\eta\not\equiv0\), and define
    \[
    \eta_d(x)=\eta\left(\frac{x-x_j}{d^{1/4}}\right).
    \]
    For sufficiently small \(d>0\), \(\operatorname{supp}\eta_d\) is contained in the
    neighborhood \(V_j\) from \eqref{ass:mixed-maximum}. On this support, one has
    \[
    \operatorname{dist}(x,M_j)\le |x-x_j|\le d^{1/4},
    \qquad\text{and}\qquad
    r(x)\ge r_*-\theta_j^+d^{1/2}.
    \]
    Inserting \(\psi=\eta_d\) in \eqref{eq:weighted-variational} gives
    \[
    \lambda_d
    \ge
    r_*-\theta_j^+d^{1/2}
    -
    \frac{d}{b_-}\,
    \frac{\int_\Omega|\nabla\eta_d|^2\,\mathrm{d}x}
    {\int_\Omega\eta_d^2\,\mathrm{d}x}.
    \]
    The change of variables \(x=x_j+d^{1/4}z\) gives
    \[
    \int_\Omega\eta_d^2(x)\,\mathrm{d}x
    =d^{N/4}\int_{\mathbb R^N}\eta^2(z)\,\mathrm{d}z,
    \qquad\text{and}\qquad
    \int_\Omega|\nabla\eta_d(x)|^2\,\mathrm{d}x
    =d^{N/4-1/2}\int_{\mathbb R^N}|\nabla\eta(z)|^2\,\mathrm{d}z.
    \]
    Hence
    \[
    \frac{\int_\Omega|\nabla\eta_d(x)|^2\,\mathrm{d}x}
    {\int_\Omega\eta_d^2(x)\,\mathrm{d}x}
    =
    K_\eta d^{-1/2},
    \qquad
    K_\eta:=
    \frac{\int_{\mathbb R^N}|\nabla\eta(z)|^2\,\mathrm{d}z}
    {\int_{\mathbb R^N}\eta^2(z)\,\mathrm{d}z},
    \]
    and therefore
    \[
    \lambda_d\ge r_*-K_{\lambda,I}d^{1/2},
    \qquad
    K_{\lambda,I}:=\theta_j^++b_-^{-1}K_\eta.
    \]
    
    Now suppose that \(M_j\subset\partial\Omega\). Fix \(x_j\in M_j\) and
    choose one of the boundary charts \(x=\Theta(y)\) such that
    \[
    \Theta(0)=x_j,\qquad
    \Theta(B_\rho^+)=\Omega\cap U,\qquad
    \text{and}\qquad
    B_\rho^+=B_\rho(0)\cap\{y_N>0\}.
    \]
    Take \(\eta\in C_c^\infty(B_1^+)\), \(\eta\not\equiv0\), and set
    \[
    \psi_d(\Theta(y))=\eta\left(\frac{y}{d^{1/4}}\right),
    \qquad y\in B_\rho^+,
    \]
    with \(\psi_d=0\) outside \(\Theta(B_\rho^+)\). The reflected quadratic
    assumption in \eqref{ass:mixed-maximum} gives
    \(r(\Theta(y))\ge r_*-K_{r,j}d^{1/2}\) on
    \(\operatorname{supp}\psi_d\) for some \(K_{r,j}>0\).
    The fixed chart and its inverse have bounded derivatives. Thus there are
    constants \(K_{j,0},K_{j,1},K_{j,2}>0\), independent of \(d\), such that
    \[
    K_{j,0}d^{N/4}\le \int_\Omega\psi_d^2(x)\,\mathrm{d}x
    \le K_{j,1}d^{N/4},
    \qquad\text{and}\qquad
    \int_\Omega|\nabla\psi_d(x)|^2\,\mathrm{d}x
    \le K_{j,2}d^{N/4-1/2}.
    \]
    {\color{black}Taking \(\psi=\psi_d\) in \eqref{eq:weighted-variational}, we again obtain}
    \[
    \lambda_d\ge r_*-K_{\lambda,B}d^{1/2},
    \qquad
    K_{\lambda,B}:=K_{r,j}+\frac{K_{j,2}}{b_-K_{j,0}}.
    \]
    {\color{black}Since \(M\) has finitely many components, we obtain~\textup{(\ref{item:concentration-eigenvalue})} by taking
    \(K_\lambda\) to be the largest of the constants for the interior and
    boundary components.}
    
    \medskip
    \noindent\textup{(\ref{item:concentration-maximum})} Recall that
    \(\varphi_d(y_d)=1\). If \(a(y_d)-\lambda_db(y_d)<0\), continuity
    and positivity give \(\Delta\varphi_d>0\) near \(y_d\), in the weak
    sense. The maximum principle excludes an interior maximum there,
    while the Hopf boundary point lemma contradicts the Neumann condition
    if \(y_d\in\partial\Omega\); see
    \cite[Chapter~3]{GilbargTrudinger2001}. Hence
    \(a(y_d)-\lambda_db(y_d)\ge0\), so \(r(y_d)\ge\lambda_d\).
    By assertion~\textup{(\ref{item:concentration-eigenvalue})}, one has
    \begin{equation}
    0\le r_*-r(y_d)\le r_*-\lambda_d\le K_\lambda d^{1/2}.
    \label{eq:concentration-maximum-gap}
    \end{equation}
    For each component \(M_j\), choose a relative neighborhood
    \(V_j\subset\overline\Omega\) on which the corresponding lower bound in
    \eqref{ass:mixed-maximum} holds, in the original variables if \(M_j\Subset\Omega\) and
    in flattened reflected variables if \(M_j\subset\partial\Omega\).
    Choose these neighborhoods small enough that
    \(V:=\bigcup_j V_j\ne\overline\Omega\). Since \(r<r_*\) on
    \(\overline\Omega\setminus V\), one has
    \[
    m_V:=\min_{\overline\Omega\setminus V}(r_*-r)>0.
    \]
    For \(0<d<(m_V/K_\lambda)^2\), estimate
    \eqref{eq:concentration-maximum-gap} gives
    \(y_d\in V\). Hence
    \(y_d\in V_j\) for some \(j\). If \(M_j\Subset\Omega\), then
    \[
    \theta_j^-\operatorname{dist}(y_d,M_j)^2
    \le
    r_*-r(y_d)
    \le
    K_\lambda d^{1/2},
    \]
    and therefore
    \[
    \operatorname{dist}(y_d,M)
    \le
    \left(\frac{K_\lambda}{\theta_j^-}\right)^{1/2}d^{1/4}.
    \]
    If
    \(M_j\subset\partial\Omega\), let \(z_d\) be the coordinate of
    \(y_d\) in one of the fixed boundary charts. Then
    \[
    \theta_j^-\operatorname{dist}(z_d,\widetilde M_j)^2
    \le
    r_*-\widetilde r_e(z_d)
    =
    r_*-r(y_d)
    \le
    K_\lambda d^{1/2}.
    \]
    Since the finitely many flattening maps are \(C^2\)-diffeomorphisms, there
    is a constant \(K_{\rm F}>0\), independent of \(d\) and \(j\), such that
    \[
    \operatorname{dist}(y_d,M)
    \le
    K_{\rm F}\operatorname{dist}(z_d,\widetilde M_j)
    \le
    K_{\rm F}
    \left(\frac{K_\lambda}{\theta_j^-}\right)^{1/2}d^{1/4}.
    \]
    {\color{black}Taking \(K_y\) to be the maximum of the finitely many constants in the
    last two estimates, we obtain~\textup{(\ref{item:concentration-maximum})}.}
    
    \medskip
    \noindent\textup{(\ref{item:concentration-mass})} Fix \(R>0\). In the interior case, if
    \(|x-y_d|\le4Rd^{1/4}\), then
    \[
    \operatorname{dist}(x,M)
    \le
    |x-y_d|+\operatorname{dist}(y_d,M)
    \le
    K_{R,0}d^{1/4},
    \qquad K_{R,0}:=4R+K_y.
    \]
    In a boundary chart, the corresponding distance estimate follows from
    the uniform Lipschitz bounds for the chart and its inverse. The quadratic
    upper bounds in \eqref{ass:mixed-maximum} therefore give
    \[
    0\le r_*-r(x)\le K_{R,1}d^{1/2},
    \]
    with \(K_{R,1}>0\) independent of \(d\). Combining this bound with
    \textup{(\ref{item:concentration-eigenvalue})} and using \(a=br\) and the
    boundedness of \(b\), we obtain a constant \(K_{R,2}>0\), independent
    of \(d\), such that
    \begin{equation}
    |a(x)-\lambda_db(x)|\le K_{R,2}d^{1/2}
    \label{eq:concentration-local-potential}
    \end{equation}
    on the corresponding ball or flattened half-ball.
    
    After rescaling by \(d^{1/4}\),
    \eqref{eq:concentration-local-potential} gives a uniform bound on the
    {\color{black}zeroth-order coefficient. We apply the interior Harnack inequality
    \cite[Theorem~8.20]{GilbargTrudinger2001}. In the boundary case, we first
    flatten the boundary and use Neumann reflection as in
    \cite[Remark~3.10]{BassHsu1991}. Since \(\varphi_d(y_d)=1\), we obtain
    constants} \(\rho_R,\theta_R>0\), independent of \(d\), such that,
    for all sufficiently small \(d>0\),
    \begin{equation}
    \varphi_d(x)\ge\theta_R>0,
    \qquad x\in\Omega\cap B_{\rho_Rd^{1/4}}(y_d)=:E_d.
    \label{eq:concentration-harnack-lower}
    \end{equation}
    The \(C^2\) regularity of the boundary gives the volume bound below,
    while \eqref{eq:concentration-harnack-lower} gives the pointwise bound:
    for some \(K_E>0\) independent of \(d\),
    \[
    |E_d|\ge K_E d^{N/4}.
    \]
    Consequently, one has
    \[
    \|\varphi_d\|_{L^2(\Omega)}^2
    \ge
    \int_{E_d}\varphi_d^2(x)\,\mathrm{d}x
    \ge \theta_R^2K_E d^{N/4},
    \qquad\text{and}\qquad
    \|\varphi_d\|_{L^1(\Omega)}
    \ge
    \int_{E_d}\varphi_d(x)\,\mathrm{d}x
    \ge \theta_RK_E d^{N/4}.
    \]
    Thus~\textup{(\ref{item:concentration-mass})} holds with \(K_2:=\theta_R^2K_E\) and
    \(K_1:=\theta_RK_E\).
    
    \medskip
    \noindent\textup{(\ref{item:concentration-tail})} Fix a relatively open neighborhood \(U\) of \(M\). Then
    the assertion is immediate if \(U=\overline\Omega\). Otherwise, define
    \[
    \sigma_U:=\min_{\overline\Omega\setminus U}(r_*-r)>0.
    \]
    By~\textup{(\ref{item:concentration-eigenvalue})}, \(r_*-\lambda_d\le\sigma_U/2\) for all small \(d\). Therefore, for
    \(x\in\overline\Omega\setminus U\), one has
    \[
    r(x)-\lambda_d
    \le
    r_*-\sigma_U-\lambda_d
    \le
    -\frac{\sigma_U}{2}.
    \]
    Since \(a-\lambda_db=b(r-\lambda_d)\), one has
    \[
    a(x)-\lambda_db(x)\le-\delta_U
    \quad (x\in\overline\Omega\setminus U),
    \qquad\text{where}\qquad
    \delta_U:=\frac{\sigma_U}{2}\min_{\overline\Omega}b>0.
    \]
    By \textup{(\ref{item:concentration-eigenvalue})},
    \(a-\lambda_db\to b(r-r_*)\) uniformly on \(\overline\Omega\) as \(d\to0^+\).
    Since the limit is negative outside \(M\), the localization argument
    in the proof of \cite[Proposition~1.3.17]{LamLou2022}, including its
    Neumann boundary argument, gives, for some \(\kappa_{U,0}>0\) and all
    sufficiently small \(d>0\),
    \begin{equation}
    \sup_{\Omega\setminus U}\varphi_d
    \le e^{-\kappa_{U,0}d^{-1/2}}.
    \label{eq:concentration-pointwise-tail}
    \end{equation}
    By~\textup{(\ref{item:concentration-mass})}, one has
    \[
    \|\varphi_d\|_{L^2(\Omega)}^2\ge K_2d^{N/4}.
    \]
    Using \eqref{eq:concentration-pointwise-tail} and
    \(\psi_1^d=\varphi_d/\|\varphi_d\|_{L^2(\Omega)}\), one obtains
    \[
    \int_{\Omega\setminus U}\bigl(\psi_1^d(x)\bigr)^2\,\mathrm{d}x
    =
    \frac{\displaystyle\int_{\Omega\setminus U}\varphi_d^2(x)\,\mathrm{d}x}
    {\displaystyle\|\varphi_d\|_{L^2(\Omega)}^2}
    \le
    \frac{|\Omega|}{K_2}d^{-N/4}e^{-2\kappa_{U,0}d^{-1/2}}.
    \]
    Choose \(\kappa_U\in(0,2\kappa_{U,0})\). Since
    \(d^{-N/4}e^{-(2\kappa_{U,0}-\kappa_U)d^{-1/2}}\) is bounded for
    \(0<d<d_0\), there is a constant \(K_U>0\) such that, for all
    sufficiently small \(d>0\),
    \[
    \|\psi_1^d\|_{L^2(\Omega\setminus U)}^2
    \le K_Ue^{-\kappa_Ud^{-1/2}}.
    \]
    {\color{black}Increasing \(K_U\) if necessary, we obtain the estimate for all \(0<d<d_0\), since
    \(\|\psi_1^d\|_{L^2(\Omega)}=1\).}
    This proves~\textup{(\ref{item:concentration-tail})}.
\end{proof}

\begin{proof}[Proof of Theorem~\ref{thm:small-diffusion-stability}]
    Recall that \(\|\psi_1^d\|_{L^2(\Omega)}=1\),
    \(u_d^*=\alpha_d\psi_1^d\), and
    \(\alpha_d=\lambda_d/\langle c,\psi_1^d\rangle\), with
    \(\langle c,\psi_1^d\rangle>0\). {\color{black}In both cases, set}
    \[
    \bar b_d:=\int_\Omega b(x)\bigl(\psi_1^d(x)\bigr)^2\,\mathrm{d}x .
    \]
    Then \(\langle B_d,\psi_1^d\rangle=\alpha_d\bar b_d\), and one has
    \begin{equation}
    B_d-\langle B_d,\psi_1^d\rangle\psi_1^d
    =\alpha_d(b-\bar b_d)\psi_1^d .
    \label{eq:small-b-perp}
    \end{equation}

    \noindent Assume \eqref{ass:mixed-maximum} and set
    \(U=U_0\cap\overline\Omega\).
    Lemma~\ref{lem:mixed-maximum-concentration}\textup{(\ref{item:concentration-tail})} gives constants
    \(d_{b,0},K_U,\kappa_U>0\) such that, for \(0<d<d_{b,0}\),
    \(\int_{\Omega\setminus U}\bigl(\psi_1^d(x)\bigr)^2\,\mathrm{d}x
    \le K_Ue^{-\kappa_Ud^{-1/2}}\). 
    Since \(b=b_*\) on \(U\), splitting the integrals over \(U\) and
    \(\Omega\setminus U\) and using \(\|\psi_1^d\|_{L^2(\Omega)}=1\) yield
    \begin{equation}
    |\bar b_d-b_*|
    \le K_{b,1}e^{-\kappa_b d^{-1/2}},
    \qquad\text{and}\qquad
    \|(b-\bar b_d)\psi_1^d\|_{L^2(\Omega)}
    \le K_{b,1}e^{-\kappa_b d^{-1/2}},
    \label{eq:small-b-flat}
    \end{equation}
    where
    \[
    \kappa_b:=\frac{\kappa_U}{2},
    \qquad
    K_{b,1}:=2\|b\|_{L^\infty(\Omega)}\bigl(K_U+K_U^{1/2}\bigr).
    \]
    Since \(\psi_1^d=\varphi_d/\|\varphi_d\|_{L^2(\Omega)}\) and
    \(\|\varphi_d\|_{L^2(\Omega)}\le|\Omega|^{1/2}\),
    Lemma~\ref{lem:mixed-maximum-concentration}\textup{(\ref{item:concentration-mass})} gives
    \begin{equation}
    \langle c,\psi_1^d\rangle
    \ge
    \left(\min_{x\in\overline\Omega}c(x)\right)
    \int_\Omega\psi_1^d(x)\,\mathrm{d}x
    \ge K_\gamma d^{N/4},
    \label{eq:small-gamma-lower}
    \end{equation}
    where one may take
    \(K_\gamma=(\min_{\overline\Omega}c)K_1|\Omega|^{-1/2}>0\).
    Since \(0<\lambda_d\le r_*\) under the standing stability assumption,
    \eqref{eq:small-gamma-lower} yields
    \(0<\alpha_d\le(r_*/K_\gamma)d^{-N/4}\) for all sufficiently small \(d>0\).
    Consequently, \eqref{eq:small-b-perp} and
    \eqref{eq:small-b-flat} imply, as \(d\to0^+\),
    \begin{equation}
    \|B_d-\langle B_d,\psi_1^d\rangle\psi_1^d\|_{L^2(\Omega)}
    \le
    \frac{r_*K_{b,1}}{K_\gamma}\,d^{-N/4}e^{-\kappa_b d^{-1/2}}
    \longrightarrow0.
    \label{eq:small-bperp-zero}
    \end{equation}
    Orthogonal projection gives
    \(\|c-\langle c,\psi_1^d\rangle\psi_1^d\|_{L^2(\Omega)}
    \le\|c\|_{L^2(\Omega)}\). Moreover,
    Lemma~\ref{lem:mixed-maximum-concentration}\textup{(\ref{item:concentration-eigenvalue})}
    and \eqref{eq:small-b-flat} give
    \begin{equation}
    \begin{aligned}
    \langle c,\psi_1^d\rangle\langle B_d,\psi_1^d\rangle
    &=\lambda_d\bar b_d\longrightarrow r_*b_*>0
    \qquad(d\to0^+).
    \end{aligned}
    \label{eq:small-principal-limit}
    \end{equation}
    {\color{black}Since \(c>0\) and \(B_d=bu_d^*>0\), we have
    \(\langle c,B_d\rangle>0\). By \eqref{eq:small-principal-limit}, the left-hand side of
    \eqref{eq:krein-condition} is bounded away from zero, while its right-hand
    side tends to zero as \(d\to0^+\) by \eqref{eq:small-bperp-zero}. It follows from
    Lemma~\ref{thm:krein-criterion} that the conclusion holds under \eqref{ass:mixed-maximum}.}

    \medskip
    \noindent Assume \eqref{ass:plateau} and choose
    \(0\ne\psi\in C_c^\infty(D)\). Since \(a=r_*b\) on
    \(\operatorname{supp}\psi\), formula \eqref{eq:weighted-variational}
    yields
    \begin{equation}
    0\le r_*-\lambda_d
    \le
    K_Dd,
    \qquad
    K_D:=
    \frac{\displaystyle\int_D|\nabla\psi(x)|^2\,\mathrm{d}x}
    {\displaystyle\int_D b(x)\psi^2(x)\,\mathrm{d}x}>0.
    \label{eq:plateau-eigen-rate}
    \end{equation}
    For any relatively open neighborhood \(U\) of \(M\) in
    \(\overline\Omega\) with \(U\ne\overline\Omega\), define
    \[
    \eta_U:=\min_{x\in\overline\Omega\setminus U}
    \bigl(r_*-r(x)\bigr)>0.
    \]
    Multiplying the first equation in \eqref{eq:principal-eigenpair} by
    \(\psi_1^d\) and integrating over \(\Omega\), one obtains
    \begin{equation}
    \int_\Omega b(x)\bigl(r_*-r(x)\bigr)\bigl(\psi_1^d(x)\bigr)^2\,\mathrm{d}x
    =
    (r_*-\lambda_d)\int_\Omega b(x)\bigl(\psi_1^d(x)\bigr)^2\,\mathrm{d}x
    -d\int_\Omega|\nabla\psi_1^d(x)|^2\,\mathrm{d}x .
    \label{eq:plateau-energy}
    \end{equation}
    Combining \eqref{eq:plateau-energy} with \eqref{eq:plateau-eigen-rate}
    and using \(\|\psi_1^d\|_{L^2(\Omega)}=1\), we obtain, for fixed \(U\) as \(d\to0^+\),
    \begin{equation}
    0
    \le
    \left(\min_{x\in\overline\Omega}b(x)\right)\eta_U
    \int_{\Omega\setminus U}\bigl(\psi_1^d(x)\bigr)^2\,\mathrm{d}x
    \le
    (r_*-\lambda_d)\|b\|_{L^\infty(\Omega)}
    \longrightarrow0.
    \label{eq:plateau-concentration}
    \end{equation}
    The integral over \(\Omega\setminus U\) is zero when
    \(U=\overline\Omega\), so the same concentration conclusion holds in that case.
    Fix \(\delta>0\). By the continuity of \(b\) and the identity \(b=b_*\)
    on \(M\), one may choose a relatively open neighborhood \(U\) of \(M\) with
    \(|b(x)-b_*|\le\delta\) for \(x\in U\). Hence
    \[
    \|(b-b_*)\psi_1^d\|_{L^2(\Omega)}
    \le
    \delta
    +
    2\|b\|_{L^\infty(\Omega)}
    \left(\int_{\Omega\setminus U}\bigl(\psi_1^d(x)\bigr)^2\,\mathrm{d}x\right)^{1/2}.
    \]
    {\color{black}By \eqref{eq:plateau-concentration}, letting \(d\to0^+\) and then
    \(\delta\to0^+\), we obtain the first limit below. The other two limits
    follow from Cauchy--Schwarz and the triangle inequality, all as \(d\to0^+\):}
    \begin{equation}
    \|(b-b_*)\psi_1^d\|_{L^2(\Omega)}\longrightarrow0,
    \qquad
    |\bar b_d-b_*|\longrightarrow0,
    \qquad\text{and}\qquad
    \|(b-\bar b_d)\psi_1^d\|_{L^2(\Omega)}\longrightarrow0.
    \label{eq:plateau-b-flat}
    \end{equation}
    Since \(\|\psi_1^d\|_{L^2(\Omega)}=1\), equations
    \eqref{eq:plateau-energy} and \eqref{eq:plateau-eigen-rate} give
    \[
    d\int_\Omega|\nabla\psi_1^d(x)|^2\,\mathrm{d}x
    \le
    K_Dd\int_\Omega b(x)\bigl(\psi_1^d(x)\bigr)^2\,\mathrm{d}x
    \le K_D\|b\|_{L^\infty(\Omega)}d.
    \]
    Consequently,
    \(\|\psi_1^d\|_{H^1(\Omega)}\le \bigl(1+K_D\|b\|_{L^\infty(\Omega)}\bigr)^{1/2}\)
    for every \(d>0\). Choose \(p>2\)
    for which
    \(H^1(\Omega)\hookrightarrow L^p(\Omega)\), let \(K_S>0\) be an embedding
    constant, and set \(K_p:=K_S\bigl(1+K_D\|b\|_{L^\infty(\Omega)}\bigr)^{1/2}\). Thus
    \(\|\psi_1^d\|_{L^p(\Omega)}\le K_p\). Next, choose
    \(\theta\in(0,1)\) so that
    \(1/2=\theta+(1-\theta)/p\). The interpolation inequality gives
    \[
    1=\|\psi_1^d\|_{L^2(\Omega)}
    \le
    \|\psi_1^d\|_{L^1(\Omega)}^\theta
    \|\psi_1^d\|_{L^p(\Omega)}^{1-\theta}
    \le
    K_p^{1-\theta}\|\psi_1^d\|_{L^1(\Omega)}^\theta
    \]
    for every \(d>0\).
    {\color{black}Thus, the \(L^1\) norm is bounded below by a positive constant independent of
    \(d\), and we have}
    \begin{equation}
    \langle c,\psi_1^d\rangle
    \ge
    \left(\min_{x\in\overline\Omega}c(x)\right)
    \int_\Omega\psi_1^d(x)\,\mathrm{d}x
    \ge
    \left(\min_{x\in\overline\Omega}c(x)\right)K_p^{-(1-\theta)/\theta}>0.
    \label{eq:plateau-gamma}
    \end{equation}
    Since \(0<\lambda_d\le r_*\), estimate \eqref{eq:plateau-gamma} shows
    that \(\alpha_d=\lambda_d/\langle c,\psi_1^d\rangle\) is uniformly bounded.
    Equations
    \eqref{eq:small-b-perp} and \eqref{eq:plateau-b-flat}, together with
    the orthogonal projection bound, give, as \(d\to0^+\),
    \begin{equation}
    \begin{aligned}
    \|B_d-\langle B_d,\psi_1^d\rangle\psi_1^d\|_{L^2(\Omega)}&\longrightarrow0.
    \end{aligned}
    \label{eq:plateau-projection-limit}
    \end{equation}
    Finally, \eqref{eq:plateau-eigen-rate} and \eqref{eq:plateau-b-flat} give
    \begin{equation}
    \begin{aligned}
    \langle c,\psi_1^d\rangle\langle B_d,\psi_1^d\rangle
    &=\lambda_d\bar b_d\longrightarrow r_*b_*>0
    \qquad(d\to0^+).
    \end{aligned}
    \label{eq:plateau-principal-limit}
    \end{equation}
    {\color{black}Since \(c>0\) and \(B_d=bu_d^*>0\), we have
    \(\langle c,B_d\rangle>0\). 
    Note that $
    \|c-\langle c,\psi_1^d\rangle\psi_1^d\|_{L^2(\Omega)}\le\|c\|_{L^2(\Omega)}$. By \eqref{eq:plateau-projection-limit} and
    \eqref{eq:plateau-principal-limit}, condition
    \eqref{eq:krein-condition} holds for all sufficiently small
    \(d\). Then Lemma~\ref{thm:krein-criterion} gives the conclusion under \eqref{ass:plateau}.}
\end{proof}

\section{Semiclassical reduction near three maximum points}
\label{sec:semiclassical-reduction}

{\color{black}In this section, we study the spectral reduction used to construct
unstable steady states. We compare the subspace spanned by the first three
eigenfunctions with the span of three local Dirichlet ground states and
represent the compressed operator by a \(3\times3\) interaction matrix.}

\subsection{The local spectral setting}
\label{sec:three-well-setting}

{\color{black}We first introduce the potential and the local operators, and then prove
the localization and eigenvalue estimates needed below.}

Let \(\Omega=(0,1)\) and choose
$
0<x_1<x_2<x_3<1 .
$
Let \(V\in C^\infty(\overline\Omega;\mathbb R)\) satisfy
\[
V\ge0,\qquad V^{-1}(0)=\{x_1,x_2,x_3\},\qquad\text{and}\qquad
V''(x_j)>0,\quad j=1,2,3.
\]
Set
\[
    d_A(x,y):=\inf_{\gamma}\int_0^1
    \sqrt{V(\gamma(s))}\,|\dot\gamma(s)|\,\mathrm{d}s,
\]
where the infimum is taken over piecewise \(C^1\) paths
\(\gamma:[0,1]\to\overline\Omega\) with \(\gamma(0)=x\) and \(\gamma(1)=y\);
see Agmon \cite{Agmon1982} and Helffer \cite[Chapter~3]{Helffer1988}.
In one space dimension,
\[
d_A(x,y)=\left|\int_x^y\sqrt{V(s)}\,\mathrm{d}s\right|.
\]
For nonempty sets \(E,F\subset\overline\Omega\), we use
\[
d_A(x,F):=\inf_{y\in F}d_A(x,y),
\qquad
d_A(E,F):=\inf_{x\in E,\,y\in F}d_A(x,y).
\]

Define
\[
H_\varepsilon^N=-\varepsilon^2\frac{\mathrm{d}^2}{\mathrm{d}x^2}+V(x),
\qquad\text{and}\qquad
D(H_\varepsilon^N)=\{u\in H^2(\Omega):u'=0\hbox{ on }\partial\Omega\}.
\]
Let \(\lambda_1^N(\varepsilon)\le\lambda_2^N(\varepsilon)\le\cdots\) be its
eigenvalues, and let \(\phi_\varepsilon\) be the positive, \(L^2\)-normalized
first eigenfunction. As in Section~\ref{sec:main-results}, set
\[
q(x):=q_*-V(x),
\qquad\text{where}\qquad q_*\in\mathbb R .
\]
Thus \(x_1,x_2,x_3\) are precisely the maximum points of \(q\), with
\(q(x_j)=q_*\). Define
\[
L_\varepsilon=q_*I-H_\varepsilon^N,\qquad
\nu_\varepsilon=q_*-\lambda_1^N(\varepsilon),
\qquad\text{and}\qquad
A_\varepsilon=-(H_\varepsilon^N-\lambda_1^N(\varepsilon)I).
\]
Then \(L_\varepsilon\phi_\varepsilon=\nu_\varepsilon\phi_\varepsilon\) and
\(A_\varepsilon\phi_\varepsilon=0\).

Write \(S_{ij}:=d_A(x_i,x_j)\) and \(S:=\min_{i\ne j}S_{ij}\).
\begingroup
\color{black}
Fix \(S/2<s<r<S\), with \(S-r\) and \(r-s\) sufficiently small,
independently of \(\varepsilon\), and set
\endgroup
\begin{equation}
\Omega_j:=\{x\in\Omega:d_A(x,x_j)<r\},\qquad j=1,2,3.
\label{eq:local-dirichlet-intervals}
\end{equation}
Under the boundary condition in \eqref{ass:tunneling} below,
\(\Omega_j\Subset\Omega\), and each interval contains exactly one zero of
\(V\). Adjacent intervals overlap when \(S_{12}=S_{23}=S\).
Let
\[
P_j(\varepsilon)=-\varepsilon^2\frac{\mathrm{d}^2}{\mathrm{d}x^2}+V(x),
\qquad\text{and}\qquad
D(P_j(\varepsilon))=H^2(\Omega_j)\cap H_0^1(\Omega_j),
\]
and denote its \(k\)-th eigenvalue by \(\mu_k^{(j)}(\varepsilon)\).
Denote by \(\nu_k^{(j)}\) the \(k\)-th eigenvalue of
\[
\mathcal H_j
=
-\frac{\mathrm{d}^2}{\mathrm{d}y^2}
+
\frac12V''(x_j)y^2
\quad\hbox{on }\mathbb R .
\]
\begingroup
\color{black}
Let \(\xi_{j,\varepsilon}\) be the \(L^2\)-normalized positive first
eigenfunction of \(P_j(\varepsilon)\). To define the interaction matrix,
choose fixed cutoff functions \(\vartheta_j\in C_c^\infty(\Omega_j)\) satisfying
\begin{equation}
0\le\vartheta_j\le1,\qquad
\vartheta_j(x)=1\quad\text{if }d_A(x,x_j)\le s,
\qquad j=1,2,3.
\label{eq:interaction-cutoffs}
\end{equation}
Define the real symmetric matrix \(W_\varepsilon=(w_{jk})_{1\le j,k\le3}\) by
\begin{equation}
\begin{aligned}
w_{jk}:={}&\frac{\varepsilon^2}{2}
\int_{\Omega_j\cap\Omega_k}
\bigl(\vartheta_j\vartheta_k'-\vartheta_k\vartheta_j'\bigr)
\bigl(\xi_{k,\varepsilon}\xi_{j,\varepsilon}'
-\xi_{j,\varepsilon}\xi_{k,\varepsilon}'\bigr)\,\mathrm dx,
\qquad 1\le j,k\le3.
\end{aligned}
\label{eq:interaction-matrix-definition}
\end{equation}
{\color{black}The coefficients in \eqref{eq:interaction-matrix-definition}
are the symmetrized coefficients in
Helffer \cite[(4.3.11)--(4.3.12)]{Helffer1988}. In particular,}
\(w_{jj}=0\) and \(w_{jk}=w_{kj}\).
\par
\endgroup

\begingroup
\color{black}
\begin{assumptionblock}{H_{\rm t}}
\label{ass:tunneling}
The following three conditions hold.
\begin{enumerate}
    \item\label{item:tunneling-actions} \(S_{12}=S_{23}=S<S_{13}\).

    \item\label{item:tunneling-local-eigenvalues} The local first eigenvalues satisfy
    \begin{equation}
    \mu_1^{(j)}(\varepsilon)-\mu_1^{(1)}(\varepsilon)
    =o\!\left(\varepsilon^{1/2}e^{-S/\varepsilon}\right),
    \qquad j=1,2,3.
    \label{eq:local-eigenvalue-assumption}
    \end{equation}

    \item\label{item:tunneling-boundary}
    \(S_\partial:=d_A(\partial\Omega,V^{-1}(0))>S\).
\end{enumerate}
Under \textup{(\ref{item:tunneling-actions})--(\ref{item:tunneling-boundary})},
Helffer \cite[Theorem~4.4.8]{Helffer1988} gives positive constants
\(b_{12}^{\,0},b_{23}^{\,0}\) such that
\begin{equation}
w_{jk}
=-\varepsilon^{1/2}\bigl(b_{jk}^{\,0}+O(\varepsilon)\bigr)
e^{-S_{jk}/\varepsilon},\qquad
(j,k)=(1,2),(2,3).
\label{eq:interaction-entry-asymptotics}
\end{equation}
We impose one further condition:
\begin{enumerate}
    \setcounter{enumi}{3}
    \item\label{item:tunneling-coefficients}
    \(b_{12}^{\,0}=b_{23}^{\,0}=:\mathfrak b_0>0\).
\end{enumerate}
\end{assumptionblock}

Set
\begin{equation}
\tau_\varepsilon:=\varepsilon^{1/2}\mathfrak b_0e^{-S/\varepsilon}.
\label{eq:tunneling-scale}
\end{equation}
{\color{black}Since \(\mathfrak b_0\) is a fixed positive constant,
it follows from \eqref{eq:tunneling-scale} that
\eqref{eq:local-eigenvalue-assumption} is equivalent to}
\(\mu_1^{(j)}(\varepsilon)-\mu_1^{(1)}(\varepsilon)=o(\tau_\varepsilon)\).
\par
\endgroup

Let \((\mu_\varepsilon,\psi_\varepsilon)\) be an eigenpair with a real,
\(L^2\)-normalized eigenfunction of \(H_\varepsilon^N\) satisfying
\begin{equation}
H_\varepsilon^N\psi_\varepsilon=\mu_\varepsilon\psi_\varepsilon,
\qquad\text{and}\qquad
\|\psi_\varepsilon\|_{L^2(\Omega)}=1 .
\label{eq:agmon-eigenpair}
\end{equation}

\begin{lemma}
    \label{lem:agmon-neumann}
    Assume that \(\mu_\varepsilon\le C_0\varepsilon\) for some \(C_0>0\). If
    \(K\subset\overline\Omega\) is closed, then, for every
    \(0<\kappa_K<d_A(K,V^{-1}(0))\), there exist
    \(C_K,\varepsilon_K>0\) such that
    \begin{equation}
    \|\psi_\varepsilon\|_{L^2(K)}
    +\varepsilon\|\psi_\varepsilon'\|_{L^2(K)}
    \le C_K e^{-\kappa_K/\varepsilon},
    \qquad 0<\varepsilon<\varepsilon_K .
    \label{eq:agmon-neumann-decay}
    \end{equation}
\end{lemma}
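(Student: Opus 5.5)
We use the standard Agmon weighted-energy argument, adapted to the Neumann condition. Put \(\Phi(x):=(1-\delta)\,d_A(x,V^{-1}(0))\), with \(\delta\in(0,1)\) small and fixed so that \((1-2\delta)\,d_A(K,V^{-1}(0))>\kappa_K\). The function \(\Phi\) is Lipschitz and satisfies \(|\Phi'|^2\le(1-\delta)^2V\) a.e. Multiply \(H_\varepsilon^N\psi_\varepsilon=\mu_\varepsilon\psi_\varepsilon\) by \(e^{2\Phi/\varepsilon}\psi_\varepsilon\) and integrate over \(\Omega=(0,1)\). Since \(\psi_\varepsilon'=0\) at \(x=0,1\), the integration by parts produces no boundary terms, which is the only point where the boundary condition enters. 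The usual identity then gives, with \(w:=e^{\Phi/\varepsilon}\psi_\varepsilon\),
\[
\varepsilon^2\|w'\|_{L^2(\Omega)}^2+\int_\Omega\bigl(V-|\Phi'|^2-\mu_\varepsilon\bigr)w^2\,\mathrm dx=0 .
\]

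Next we split \(\Omega\). Let \(\Omega_-:=\{x:\ (1-(1-\delta)^2)V(x)\le 2C_0\varepsilon\}\) and let \(\Omega_+\) be its complement. Since the zeros of \(V\) are nondegenerate, \(\Omega_-\) lies in \(C\varepsilon^{1/2}\)-neighbourhoods of \(x_1,x_2,x_3\). There \(d_A(x,V^{-1}(0))\le C\varepsilon\), so \(\Phi/\varepsilon\) is bounded and \(w^2\le C\psi_\varepsilon^2\). On \(\Omega_+\) we have \(V-|\Phi'|^2-\mu_\varepsilon\ge \tfrac12(1-(1-\delta)^2)V\ge C_0\varepsilon\). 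On \(\Omega_-\) the coefficient is bounded below by \(-\mu_\varepsilon\ge -C_0\varepsilon\). Moving the \(\Omega_-\) part to the right and using \(\|\psi_\varepsilon\|_{L^2}=1\) gives
\[
\varepsilon^2\|w'\|_{L^2(\Omega)}^2+C_0\varepsilon\|w\|_{L^2(\Omega_+)}^2\le C\varepsilon ,
\]
and hence \(\|w\|_{L^2(\Omega)}+\varepsilon\|w'\|_{L^2(\Omega)}\le C\varepsilon^{-1/2}\), again using the \(\Omega_-\) bound.

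To return to \(\psi_\varepsilon\), use \(\Phi\ge(1-\delta)d_A(K,V^{-1}(0))\) on \(K\). Then
\[
\|\psi_\varepsilon\|_{L^2(K)}\le C\varepsilon^{-1/2}e^{-(1-\delta)d_A(K,V^{-1}(0))/\varepsilon}.
\]
For the derivative, write \(\psi_\varepsilon'=e^{-\Phi/\varepsilon}\bigl(w'-\varepsilon^{-1}\Phi'w\bigr)\). Since \(|\Phi'|\le C\), this gives \(\varepsilon\|\psi_\varepsilon'\|_{L^2(K)}\le C\varepsilon^{-1/2}e^{-(1-\delta)d_A(K,V^{-1}(0))/\varepsilon}\). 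The factor \(\varepsilon^{-1/2}\) is absorbed by lowering the exponent from \((1-\delta)d_A\) to \(\kappa_K\), which yields \eqref{eq:agmon-neumann-decay} for \(\varepsilon<\varepsilon_K\).

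The main point needing care is the rigorous justification with a merely Lipschitz weight. We handle it either by regularizing \(\Phi\) or by noting that \(e^{2\Phi/\varepsilon}\psi_\varepsilon\in H^1(\Omega)\), which is enough for the weak formulation with test function \(e^{2\Phi/\varepsilon}\psi_\varepsilon\). The second delicate point is the uniform control of \(\Omega_-\) under \(\mu_\varepsilon\le C_0\varepsilon\). This is where the nondegeneracy \(V''(x_j)>0\) is used: it gives \(V(x)\ge c\,|x-x_j|^2\) near each \(x_j\), so that \(d_A(x,x_j)\le C|x-x_j|^2\le C\varepsilon\) on \(\Omega_-\).
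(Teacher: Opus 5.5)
Your proof is correct. It uses the same Agmon identity and the same weight \((1-\delta)\,d_A(\cdot,V^{-1}(0))\) as the paper, but exploits them differently.

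\textbf{The paper's route.} The paper splits \(\Omega\) with a fixed, \(\varepsilon\)-independent neighbourhood \(U\) of the wells on which \(d_A(\cdot,V^{-1}(0))<\theta\). It therefore only obtains \(\int_{\Omega\setminus U}e^{2\Phi_\delta/\varepsilon}\psi_\varepsilon^2\le C\varepsilon\,e^{2(1-\delta)\theta/\varepsilon}\), and pays an extra loss \(\theta\) in the exponent. It then gets the derivative bound from a separate cutoff (Caccioppoli) argument, applying the \(L^2\) estimate on \(\operatorname{supp}\chi'\).

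\textbf{Your route.} You take the \(\varepsilon\)-dependent region \(\{(1-(1-\delta)^2)V\le 2C_0\varepsilon\}\). You use \(V''(x_j)>0\) to show \(\Phi/\varepsilon=O(1)\) there. This yields a uniform global bound \(\|e^{\Phi/\varepsilon}\psi_\varepsilon\|_{L^2(\Omega)}+\varepsilon\|(e^{\Phi/\varepsilon}\psi_\varepsilon)'\|_{L^2(\Omega)}\le C\). In fact your inequalities give \(O(1)\), not merely \(O(\varepsilon^{-1/2})\), so there is no factor to absorb. The derivative estimate on \(K\) then follows in one line from \(\psi_\varepsilon'=e^{-\Phi/\varepsilon}(w'-\varepsilon^{-1}\Phi'w)\).

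\textbf{Comparison.} Your version gives a sharper weighted estimate and avoids the second cutoff step. However, it genuinely needs both the nondegeneracy of the wells and \(\mu_\varepsilon=O(\varepsilon)\). The paper's version needs neither: it works for any continuous \(V\ge0\), and \(\mu_\varepsilon\to0\) would already suffice. In the present setting both hypotheses hold, including for the Dirichlet problems on \(\Omega_0\) where the paper reuses this estimate, so your argument covers those uses as well.
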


\begin{proof}
    We may assume that \(K\ne\varnothing\).
    Let \(\Phi\in W^{1,\infty}(\Omega;\mathbb R)\). {\color{black}Multiplying \eqref{eq:agmon-eigenpair} by
    \(e^{2\Phi/\varepsilon}\psi_\varepsilon\) and integrating by parts using
    \(\psi_\varepsilon'=0\) on \(\partial\Omega\), we obtain}
    \[
    \varepsilon^2
    \int_\Omega
    \psi_\varepsilon'
    (e^{2\Phi/\varepsilon}\psi_\varepsilon)'\,\mathrm{d}x
    +
    \int_\Omega
    (V-\mu_\varepsilon)e^{2\Phi/\varepsilon}|\psi_\varepsilon|^2\,\mathrm{d}x
    =0,
    \]
    {\color{black}and, with
    \(v_\varepsilon=e^{\Phi/\varepsilon}\psi_\varepsilon\), we have}
    \begin{equation}
    \varepsilon^2
    \int_\Omega|v_\varepsilon'|^2\,\mathrm{d}x
    +
    \int_\Omega
    (V-\mu_\varepsilon-|\Phi'|^2)
    |v_\varepsilon|^2\,\mathrm{d}x
    =0.
    \label{eq:agmon-weighted-identity}
    \end{equation}
    
    Set \(d_K:=d_A(K,V^{-1}(0))\) and fix
    \(\kappa_K\in(0,d_K)\). Choose \(\delta\in(0,1)\) and
    \(\theta\in(0,d_K)\) such that
    \[
    \kappa_K<(1-\delta)(d_K-\theta).
    \]
    Set \(d(x):=d_A(x,V^{-1}(0))\),
    \(\Phi_\delta(x):=(1-\delta)d(x)\), and
    \(\alpha_\delta:=1-(1-\delta)^2>0\).
    The function \(d\) is Lipschitz and satisfies \(|d'|^2\le V\) a.e. in
    \(\Omega\); see Fournais--Helffer \cite[Sections~7.2.2--7.2.4]{FH2010}.
    Thus \(\Phi_\delta\in W^{1,\infty}(\Omega)\), and
    \[
    V-\mu_\varepsilon-|\Phi_\delta'|^2
    \ge
    \alpha_\delta V-\mu_\varepsilon .
    \]
    
    Choose a relatively open
    neighborhood \(U\) of \(V^{-1}(0)\) such that
    \[
    \overline U\subset\{x\in\overline\Omega:d(x)<\theta\}.
    \]
    Then \(K\cap\overline U=\varnothing\). Since \(U\) contains every zero of
    \(V\), compactness gives \(v_0:=\min_{x\in\overline\Omega\setminus U}V(x)>0\).
    Set \(\varepsilon_0:=\min\{1,\alpha_\delta v_0/(2C_0)\}\).
    For \(0<\varepsilon<\varepsilon_0\), one has
    \(\alpha_\delta V-\mu_\varepsilon\ge\alpha_\delta v_0/2\)
    on \(\Omega\setminus U\), whereas
    \(\alpha_\delta V-\mu_\varepsilon\ge-C_0\varepsilon\) on \(U\).
    {\color{black}It follows from \eqref{eq:agmon-weighted-identity} with \(\Phi=\Phi_\delta\) that}
    \begin{equation}
    \frac{\alpha_\delta v_0}{2}
    \int_{\Omega\setminus U}
    e^{2\Phi_\delta/\varepsilon}|\psi_\varepsilon|^2\,\mathrm{d}x
    \le
    C_0\varepsilon\int_U
    e^{2\Phi_\delta/\varepsilon}|\psi_\varepsilon|^2\,\mathrm{d}x .
    \label{eq:agmon-weighted-mass}
    \end{equation}
    Since \(\Phi_\delta\le(1-\delta)\theta\) on \(U\) and
    \(\|\psi_\varepsilon\|_{L^2(\Omega)}=1\), estimate \eqref{eq:agmon-weighted-mass} gives
    \[
    \int_{\Omega\setminus U}
    e^{2\Phi_\delta/\varepsilon}|\psi_\varepsilon|^2\,\mathrm{d}x
    \le
    \frac{2C_0\varepsilon}{\alpha_\delta v_0}
    e^{2(1-\delta)\theta/\varepsilon} .
    \]
    Since \(d(x)\ge d_K\) on \(K\),
    \[
    \Phi_\delta\ge(1-\delta)d_K
    \qquad\hbox{on }K.
    \]
    {\color{black}Combining these two estimates, we obtain}
    \[
    \|\psi_\varepsilon\|_{L^2(K)}^2
    \le
    \frac{2C_0\varepsilon}{\alpha_\delta v_0}
    \exp\!\left(-\frac{2(1-\delta)(d_K-\theta)}{\varepsilon}\right).
    \]
    Since \(\kappa_K<(1-\delta)(d_K-\theta)\), after decreasing
    \(\varepsilon_0\), if necessary, one has
    \begin{equation}
    \|\psi_\varepsilon\|_{L^2(K)}
    \le
    C_{K,0}e^{-\kappa_K/\varepsilon},
    \qquad 0<\varepsilon<\varepsilon_0,
    \label{eq:agmon-local-mass}
    \end{equation}
    for some \(C_{K,0}>0\).
    
    {\color{black}Next, we estimate the gradient. Choose \(d_1\in(\kappa_K,d_K)\),}
    a relatively open set \(U_K\subset\overline\Omega\), and
    \(\chi\in C^\infty(\overline\Omega;\mathbb R)\) such that \(0\le\chi\le1\)
    and
    \[
    K\subset U_K,\qquad \chi=1\ \hbox{on }U_K,
    \qquad
    \operatorname{supp}\chi\subset
    \{x\in\overline\Omega:d(x)>d_1\}.
    \]
    {\color{black}Multiplying \eqref{eq:agmon-eigenpair} by \(\chi^2\psi_\varepsilon\) and
    integrating by parts using the Neumann boundary condition, we obtain}
    \[
    \varepsilon^2
    \int_\Omega\chi^2|\psi_\varepsilon'|^2\,\mathrm{d}x
    +
    \int_\Omega\chi^2(V-\mu_\varepsilon)|\psi_\varepsilon|^2\,\mathrm{d}x
    =
    -2\varepsilon^2
    \int_\Omega
    \chi \psi_\varepsilon\chi'\psi_\varepsilon'\,\mathrm{d}x .
    \]
    Since \(\operatorname{supp}\chi\cap V^{-1}(0)=\varnothing\), one has
    \[
    v_\chi:=\min_{x\in\operatorname{supp}\chi}V(x)>0 .
    \]
    Thus \(V-\mu_\varepsilon\ge0\) on \(\operatorname{supp}\chi\) whenever
    \(0<\varepsilon<\varepsilon_\chi:=\min\{1,v_\chi/(2C_0)\}\). For such
    \(\varepsilon\), discarding the nonnegative integral containing
    \(V-\mu_\varepsilon\) from the integration-by-parts identity gives
    \[
    \varepsilon^2
    \int_\Omega\chi^2|\psi_\varepsilon'|^2\,\mathrm{d}x
    \le
    2\varepsilon^2
    \left|
    \int_\Omega
    \chi\psi_\varepsilon\chi'\psi_\varepsilon'\,\mathrm{d}x
    \right|.
    \]
    {\color{black}By \(2ab\le a^2/2+2b^2\), we obtain}
    \[
    \varepsilon^2
    \int_\Omega\chi^2|\psi_\varepsilon'|^2\,\mathrm{d}x
    \le
    \frac{\varepsilon^2}{2}
    \int_\Omega\chi^2|\psi_\varepsilon'|^2\,\mathrm{d}x
    +
    2\varepsilon^2\|\chi'\|_{L^\infty(\Omega)}^2
    \int_{\operatorname{supp}\chi'}
    |\psi_\varepsilon|^2\,\mathrm{d}x .
    \]
    Therefore
    \[
    \varepsilon^2
    \int_K|\psi_\varepsilon'|^2\,\mathrm{d}x
    \le
    4\varepsilon^2\|\chi'\|_{L^\infty(\Omega)}^2
    \int_{\operatorname{supp}\chi'}
    |\psi_\varepsilon|^2\,\mathrm{d}x .
    \]
    {\color{black}We apply the \(L^2\)-estimate \eqref{eq:agmon-local-mass} to the closed set
    \(K_\chi:=\operatorname{supp}\chi'\). We can take the exponent \(\kappa_K\),
    since \(d_A(K_\chi,V^{-1}(0))\ge d_1>\kappa_K\).}
    Thus there exist \(C_{\chi,0},\varepsilon_{\chi,0}>0\) such that
    \[
    \|\psi_\varepsilon\|_{L^2(K_\chi)}
    \le C_{\chi,0}e^{-\kappa_K/\varepsilon},
    \qquad 0<\varepsilon<\varepsilon_{\chi,0}.
    \]
    Since \(\varepsilon<\varepsilon_\chi\le1\), it follows that
    \begin{equation}
    \varepsilon\|\psi_\varepsilon'\|_{L^2(K)}
    \le
    2\|\chi'\|_{L^\infty(\Omega)}C_{\chi,0}
    e^{-\kappa_K/\varepsilon}.
    \label{eq:agmon-local-gradient}
    \end{equation}
    Combining \eqref{eq:agmon-local-mass} and \eqref{eq:agmon-local-gradient}
    proves the assertion with
    \(C_K:=C_{K,0}+2\|\chi'\|_{L^\infty(\Omega)}C_{\chi,0}\)
    and
    \(\varepsilon_K:=\min\{\varepsilon_0,\varepsilon_\chi,
    \varepsilon_{\chi,0}\}\).
\end{proof}

\begin{lemma}
    \label{lem:bound-first-four}
    There exist constants \(C_{\rm E},\varepsilon_{\rm E}>0\) such that
    \[
    \lambda_j^N(\varepsilon)\le C_{\rm E}\varepsilon,
    \qquad j=1,2,3,4,\quad 0<\varepsilon<\varepsilon_{\rm E}.
    \]
\end{lemma}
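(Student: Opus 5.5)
We bound the first four Neumann eigenvalues through the min--max principle for the self-adjoint operator \(H_\varepsilon^N\). It suffices to exhibit a four-dimensional subspace \(E_\varepsilon\subset H^1(\Omega)\) on which the quadratic form
\[
\mathfrak q_\varepsilon(u)=\varepsilon^2\int_\Omega|u'|^2\,\mathrm dx+\int_\Omega V|u|^2\,\mathrm dx
\]
satisfies \(\mathfrak q_\varepsilon(u)\le C_{\rm E}\varepsilon\|u\|_{L^2(\Omega)}^2\). Then \(\lambda_4^N(\varepsilon)\le\sup_{0\ne u\in E_\varepsilon}\mathfrak q_\varepsilon(u)/\|u\|_{L^2(\Omega)}^2\le C_{\rm E}\varepsilon\), and the bound for \(j=1,2,3\) follows from monotonicity of the eigenvalues.

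The space is built from scaled harmonic-oscillator profiles at the minima of \(V\). Since there are only three wells, we place two test functions near one well. Fix \(\rho>0\) so small that the intervals \(I_j=(x_j-2\rho,x_j+2\rho)\) are pairwise disjoint, contained in \(\Omega\), and satisfy \(V(x)\le C_V(x-x_j)^2\) on \(I_j\). The last condition holds by Taylor expansion, because \(V(x_j)=V'(x_j)=0\). Let \(\chi\in C_c^\infty(-2\rho,2\rho)\) be a cutoff equal to \(1\) on \((-\rho,\rho)\). Define
\[
u_{j,m}(x)=\chi(x-x_j)\,h_m\!\left(\frac{x-x_j}{\varepsilon^{1/2}}\right),\qquad h_0(y)=e^{-y^2/2},\quad h_1(y)=ye^{-y^2/2}.
\]
Take \(E_\varepsilon=\operatorname{span}\{u_{1,0},u_{2,0},u_{3,0},u_{1,1}\}\). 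Functions attached to different wells have disjoint supports, and \(u_{1,0}\perp u_{1,1}\) by parity. The four functions are therefore orthogonal, \(\dim E_\varepsilon=4\), and the quadratic form is diagonal across different wells.

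For each generator, the change of variables \(x=x_j+\varepsilon^{1/2}y\) gives \(\|u_{j,m}\|_{L^2}^2=\varepsilon^{1/2}(c_m+O(e^{-c/\varepsilon}))\) with \(c_m>0\). It also gives
\[
\varepsilon^2\int|u_{j,m}'|^2\,\mathrm dx\le C\varepsilon^{3/2},\qquad \int V|u_{j,m}|^2\,\mathrm dx\le C_V\varepsilon\cdot\varepsilon^{1/2}\int y^2h_m^2\,\mathrm dy+O(e^{-c/\varepsilon}).
\]
The terms coming from derivatives of the cutoff are exponentially small, since \(\chi'\) is supported where \(|y|\ge\rho\varepsilon^{-1/2}\).

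It remains to control the form on a general \(u\in E_\varepsilon\). The cross terms between different wells vanish because the supports are disjoint. The cross term between \(u_{1,0}\) and \(u_{1,1}\) is handled by Cauchy--Schwarz for the nonnegative form \(\mathfrak q_\varepsilon\): \(\mathfrak q_\varepsilon(\alpha u_{1,0}+\beta u_{1,1})\le 2(|\alpha|^2\mathfrak q_\varepsilon(u_{1,0})+|\beta|^2\mathfrak q_\varepsilon(u_{1,1}))\). Combined with the orthogonality of the generators, this yields \(\mathfrak q_\varepsilon(u)\le C_{\rm E}\varepsilon\|u\|_{L^2}^2\). There is no real obstacle here. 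The only point needing care is the fourth eigenvalue, which requires the odd excited-state profile \(h_1\) in one well, together with checking that it stays orthogonal and keeps the \(O(\varepsilon)\) energy scale.
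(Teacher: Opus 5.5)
Your proposal is correct and follows essentially the same route as the paper: it uses a cut-off Gaussian at each of the three wells plus the odd profile $(x-x_1)e^{-(x-x_1)^2/(2\varepsilon)}$ at $x_1$, orthogonality from disjoint supports and parity, the bound $\mathfrak q_\varepsilon(f+g)\le2\mathfrak q_\varepsilon(f)+2\mathfrak q_\varepsilon(g)$, and the min--max principle. The one detail to state explicitly is that the cutoff $\chi$ must be chosen even (as the paper does for $\chi_1$), since otherwise the parity argument fails and $u_{1,0}$, $u_{1,1}$ are orthogonal only up to an exponentially small error.
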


\begin{proof}
    Choose \(r_j>0\) so that the intervals
    \(I_j:=(x_j-2r_j,x_j+2r_j)\) are contained in \(\Omega\) and are pairwise
    disjoint. Since \(V(x_j)=V'(x_j)=0\), there is \(M_j>0\) such that
    \[
    0\le V(x)\le M_j|x-x_j|^2,
    \qquad x\in I_j,\quad j=1,2,3.
    \]
    Choose \(\chi_j\in C_c^\infty(I_j)\) satisfying
    \(0\le\chi_j\le1\) and \(\chi_j=1\) on
    \([x_j-r_j,x_j+r_j]\). We also choose \(\chi_1\) to be even with respect
    to \(x_1\). Define
    \[
    \psi_{j,\varepsilon}(x)
    =
    \chi_j(x)e^{-|x-x_j|^2/(2\varepsilon)},
    ~j=1,2,3,~  
    \text{and}~
    \widetilde\psi_{1,\varepsilon}(x)
    =
    \chi_1(x)(x-x_1)e^{-|x-x_1|^2/(2\varepsilon)}.
    \]
    Let
    \[
    m_0:=\int_{-1}^{1}e^{-y^2}\,\mathrm{d}y,
    \qquad
    m_2:=\int_{\mathbb R}y^2e^{-y^2}\,\mathrm{d}y,
    \qquad
    m_4:=\int_{\mathbb R}y^4e^{-y^2}\,\mathrm{d}y.
    \]
    For \(0<\varepsilon<\min_j r_j^2\), the change of variables
    \(x=x_j+\varepsilon^{1/2}y\) gives
    \[
    \|\psi_{j,\varepsilon}\|_{L^2(\Omega)}^2
    \ge m_0\varepsilon^{1/2},
    \qquad
    \int_\Omega |x-x_j|^2
    e^{-|x-x_j|^2/\varepsilon}\,\mathrm{d}x
    \le m_2\varepsilon^{3/2},
    \]
    and
    \[
    \int_\Omega |x-x_j|^4
    e^{-|x-x_j|^2/\varepsilon}\,\mathrm{d}x
    \le m_4\varepsilon^{5/2}.
    \]
    On \(\operatorname{supp}\chi_j'\), one has
    \(|x-x_j|\ge r_j\). Hence there are constants
    \(K_j>0\) and \(\varepsilon_j\in(0,r_j^2)\) such that
    \[
    2\varepsilon^2\|\chi_j'\|_{L^\infty(\Omega)}^2
    |\Omega|e^{-r_j^2/\varepsilon}
    \le K_j\varepsilon^{3/2},
    \qquad 0<\varepsilon<\varepsilon_j.
    \]
    Since
    \[
    \psi_{j,\varepsilon}'(x)
    =
    \left(\chi_j'(x)-\frac{x-x_j}{\varepsilon}\chi_j(x)\right)
    e^{-|x-x_j|^2/(2\varepsilon)},
    \]
    the cutoff estimate and the definition of \(m_2\) give
    \[
    \varepsilon^2\int_\Omega
    |\psi_{j,\varepsilon}'(x)|^2\,\mathrm{d}x
    \le
    (K_j+2m_2)\varepsilon^{3/2},
    \qquad 0<\varepsilon<\varepsilon_j,
    \]
    and
    \[
    \int_\Omega V(x)\psi_{j,\varepsilon}^2(x)\,\mathrm{d}x
    \le
    M_jm_2\varepsilon^{3/2}.
    \]
    Thus, with
    \(A_j:=(K_j+(2+M_j)m_2)/m_0\), one has
    \[
    \varepsilon^2\int_\Omega
    |\psi_{j,\varepsilon}'(x)|^2\,\mathrm{d}x
    +
    \int_\Omega V(x)\psi_{j,\varepsilon}^2(x)\,\mathrm{d}x
    \le
    A_j\varepsilon\|\psi_{j,\varepsilon}\|_{L^2(\Omega)}^2,
    \qquad 0<\varepsilon<\varepsilon_j.
    \]

    {\color{black}Applying the same change of variables to the odd test function, we obtain constants}
    \(\widetilde A_1>0\) and
    \(\widetilde\varepsilon_1\in(0,r_1^2)\) such that
    \[
    \varepsilon^2\int_\Omega
    |\widetilde\psi_{1,\varepsilon}'(x)|^2\,\mathrm{d}x
    +
    \int_\Omega V(x)\widetilde\psi_{1,\varepsilon}^2(x)\,\mathrm{d}x
    \le
    \widetilde A_1\varepsilon
    \|\widetilde\psi_{1,\varepsilon}\|_{L^2(\Omega)}^2,
    \qquad 0<\varepsilon<\widetilde\varepsilon_1.
    \]
    For \(f\in H^1(\Omega)\), define the nonnegative quadratic form
    \[
    Q_\varepsilon(f)
    :=
    \varepsilon^2\int_\Omega|f'(x)|^2\,\mathrm{d}x
    +
    \int_\Omega V(x)|f(x)|^2\,\mathrm{d}x.
    \]
    Set
    \[
    \mathcal T_\varepsilon
    :=
    \operatorname{span}\{\psi_{1,\varepsilon},
    \widetilde\psi_{1,\varepsilon},
    \psi_{2,\varepsilon},\psi_{3,\varepsilon}\}.
    \]
    Since \(\chi_1\) is even about \(x_1\), the first two functions are
    \(L^2\)-orthogonal. The other pairs have disjoint supports, so
    \(\dim\mathcal T_\varepsilon=4\).
    Using \(Q_\varepsilon(f+g)\le2Q_\varepsilon(f)+2Q_\varepsilon(g)\) for
    the first two functions and the disjoint supports for the remaining terms,
    we obtain the bound
    \[
    Q_\varepsilon(f)
    \le C_{\rm E}\varepsilon\|f\|_{L^2(\Omega)}^2,
    \qquad
    f\in\mathcal T_\varepsilon,\quad
    0<\varepsilon<\varepsilon_{\rm E},
    \]
    where
    $
    C_{\rm E}:=
    \max\{2A_1,2\widetilde A_1,A_2,A_3\},
   $ and $
    \varepsilon_{\rm E}:=
    \min\{\varepsilon_1,\varepsilon_2,\varepsilon_3,
    \widetilde\varepsilon_1\}.
   $
    {\color{black}By the min--max principle \cite[Appendix~A]{FH2010}, we have}
    \[
    \lambda_4^N(\varepsilon)
    \le
    \sup_{0\ne f\in\mathcal T_\varepsilon}
    \frac{Q_\varepsilon(f)}{\|f\|_{L^2(\Omega)}^2}
    \le
    C_{\rm E}\varepsilon.
    \]
    Since the eigenvalues are nondecreasing, the same bound holds for
    \(j=1,2,3\).
\end{proof}

By \eqref{ass:tunneling}\textup{(\ref{item:tunneling-boundary})} and continuity
of \(d_A\), choose \(0<\alpha_0<x_1<x_2<x_3<\beta_0<1\) so that
$
\Omega_0=(\alpha_0,\beta_0)
$
contains \(\bigcup_{j=1}^3\overline\Omega_j\) and satisfies
\[
d_A(\partial\Omega_0,V^{-1}(0))
=
\min\{d_A(\alpha_0,V^{-1}(0)),d_A(\beta_0,V^{-1}(0))\}
>S .
\]
Let
\[
H_\varepsilon^{D}
=
-\varepsilon^2\frac{\mathrm{d}^2}{\mathrm{d}x^2}+V(x),
\qquad\text{and}\qquad
D(H_\varepsilon^{D})=H^2(\Omega_0)\cap H_0^1(\Omega_0).
\]
Write \(\lambda_k^D(\varepsilon)\) for the ordered eigenvalues of
\(H_\varepsilon^{D}\).
Choose \(L^2(\Omega)\)-orthonormal eigenfunctions \(u_k^N\) for
\(\lambda_k^N(\varepsilon)\), and \(L^2(\Omega_0)\)-orthonormal eigenfunctions
\(e_k^D\) for \(\lambda_k^D(\varepsilon)\). For \(k\ge1\), put
\[
E_{\varepsilon,k}^{N}:=\operatorname{span}\{u_1^N,\ldots,u_k^N\},
\qquad\text{and}\qquad
E_{\varepsilon,k}^{D}:=\operatorname{span}\{e_1^D,\ldots,e_k^D\}.
\]
Let \(\Pi_\varepsilon^N\) and \(\Pi_{\varepsilon,0}^{D}\) be the
orthogonal projections onto \(E_{\varepsilon,3}^{N}\subset L^2(\Omega)\) and
\(E_{\varepsilon,3}^{D}\subset L^2(\Omega_0)\), respectively. Once an
orthonormal basis of \(E_{\varepsilon,3}^{D}\) is specified,
\(\mathcal M_{\varepsilon,0}^{D}\) denotes the matrix of
\(\Pi_{\varepsilon,0}^{D}H_\varepsilon^{D}\Pi_{\varepsilon,0}^{D}\) on this
space.
For \(\chi\in C_c^\infty(\Omega_0)\), define
\(J_\chi:L^2(\Omega_0)\to L^2(\Omega)\) by extending \(\chi f\) by zero:
\[
(J_\chi f)(x)=
\begin{cases}
\chi(x)f(x),&x\in\Omega_0,\\
0,&x\in\Omega\setminus\Omega_0,
\end{cases}
\qquad f\in L^2(\Omega_0).
\]

Fix \(S<\rho<d_A(\partial\Omega_0,V^{-1}(0))\) and choose
\(\chi\in C_c^\infty(\Omega_0)\), \(0\le\chi\le1\), such that
\begin{equation}
\chi=1
\quad\hbox{if}\quad
d_A(x,V^{-1}(0))\le\rho.
\label{eq:nd-cutoff}
\end{equation}
Set \(\widetilde E_{\varepsilon,0}^D=J_\chi E_{\varepsilon,3}^{D}\), and let
\(\widetilde\Pi_{\varepsilon,0}^D\) be the orthogonal projection onto
\(\widetilde E_{\varepsilon,0}^D\) in \(L^2(\Omega)\).

\begin{lemma}
    \label{lem:ND-comparison}
    Let \(\Omega_0=(\alpha_0,\beta_0)\Subset\Omega\) satisfy
    \(V^{-1}(0)\Subset\Omega_0\) and
    \(d_A(\partial\Omega_0,V^{-1}(0))>S\). The following assertions hold.
    \begin{enumerate}
        \item\label{item:nd-eigenvalues} There exist \(C_{\rm V}>0\), \(\kappa_{\rm V}>S\), and
        \(\varepsilon_{\rm V}>0\) such that, whenever
        \(0<\varepsilon<\varepsilon_{\rm V}\), one has
        \[
        |\lambda_k^N(\varepsilon)-\lambda_k^D(\varepsilon)|
        \le C_{\rm V}e^{-\kappa_{\rm V}/\varepsilon},
        \qquad 1\le k\le4.
        \]
        
        \item\label{item:nd-spectral-spaces} Assume in addition that, for some \(\gamma>0\) and all sufficiently
        small \(\varepsilon>0\),
        \(\lambda_4^D(\varepsilon)-\lambda_3^D(\varepsilon)
        \ge\gamma\varepsilon\). Then there exist \(C_{\rm P}>0\), \(\kappa_{\rm P}>S\), and
        \(\varepsilon_{\rm P}>0\) such that, whenever
        \(0<\varepsilon<\varepsilon_{\rm P}\), one has
        \[
        \|\widetilde\Pi_{\varepsilon,0}^D-\Pi_\varepsilon^N\|
        \le C_{\rm P}e^{-\kappa_{\rm P}/\varepsilon}.
        \]
    \end{enumerate}
\end{lemma}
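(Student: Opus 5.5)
The plan is a two-sided quasimode comparison using min--max and the Agmon estimates of Lemma~\ref{lem:agmon-neumann}. Lemma~\ref{lem:agmon-neumann} applies to Neumann eigenfunctions with \(\mu_\varepsilon\le C_0\varepsilon\), and Lemma~\ref{lem:bound-first-four} gives this for \(k\le4\). The same weighted-identity argument applies verbatim to Dirichlet eigenfunctions on \(\Omega_0\), since the boundary terms vanish. For \(k\le4\) we also have \(\lambda_k^D\le C\varepsilon\), by the same test functions as in Lemma~\ref{lem:bound-first-four}, which are supported inside \(\Omega_0\). Hence both families decay like \(e^{-\kappa/\varepsilon}\) on \(\{d_A(x,V^{-1}(0))\ge\rho'\}\) for any \(\kappa<\rho'\). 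We take \(\rho'\) slightly below \(\rho\); since \(\rho>S\), the exponent can be chosen in \((S,\rho)\).

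For (\ref{item:nd-eigenvalues}), we first compare Dirichlet and Neumann eigenvalues in one direction. Since \(H_0^1(\Omega_0)\), extended by zero, lies in \(H^1(\Omega)\) and the quadratic forms agree, min--max gives \(\lambda_k^N\le\lambda_k^D\) directly.

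For the reverse inequality, take the span of \(\chi u_1^N,\dots,\chi u_k^N\), restricted to \(\Omega_0\). Its Gram matrix differs from the identity by at most \(\sum\|(1-\chi)u_j^N\|\,\|u_i^N\|\), which is \(O(e^{-\kappa/\varepsilon})\) because \(1-\chi\) is supported where \(d_A\ge\rho\). For the quadratic form, use the IMS-type identity
\[
Q(\chi f)=\langle \chi^2 H f,f\rangle+\varepsilon^2\|\chi' f\|^2
\]
for real \(f\) in the span. Then \(\langle\chi^2Hf,f\rangle\le\lambda_k^N\|\chi f\|^2+\lambda_k^N\|(1-\chi^2)^{1/2}f\|^2\), and the error term is again exponentially small. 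Min--max then yields \(\lambda_k^D\le\lambda_k^N+Ce^{-\kappa/\varepsilon}\). Linear independence of the span for small \(\varepsilon\) follows from the Gram estimate. All constants are uniform for \(k\le4\).

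For (\ref{item:nd-spectral-spaces}), set \(\widetilde e_k:=J_\chi e_k^D\) for \(k\le3\). The residual satisfies
\[
(H_\varepsilon^N-\lambda_k^D)\widetilde e_k=-\varepsilon^2(2\chi' e_k^{D\prime}+\chi'' e_k^D).
\]
It is supported on \(\operatorname{supp}\chi'\), where \(d_A\ge\rho\), and \(\widetilde e_k\) satisfies the Neumann condition because \(\chi\) vanishes near \(\partial\Omega_0\). By the Dirichlet Agmon bound including the gradient term, its norm is \(O(e^{-\kappa/\varepsilon})\) with \(\kappa\in(S,\rho)\); the \(\varepsilon\) prefactor absorbs the \(\varepsilon^{-1}\) coming from \(\varepsilon\|e'\|\). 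Part (\ref{item:nd-eigenvalues}) and the gap hypothesis give \(\lambda_4^N-\lambda_3^N\ge\gamma\varepsilon/2\). Moreover all of \(\lambda_1^N,\lambda_2^N,\lambda_3^N\) and the \(\lambda_k^D\) for \(k\le3\) lie in an interval of length \(O(\varepsilon)\) that stays at distance \(\ge\gamma\varepsilon/2\) from \(\lambda_4^N\). Hence the spectral projection gives \(\|(1-\Pi_\varepsilon^N)\widetilde e_k\|\le(2/\gamma\varepsilon)\|\text{residual}\|\). The Gram matrix of \(\{\widetilde e_k\}\) is \(I+O(e^{-\kappa/\varepsilon})\), so its orthonormalization changes vectors by an exponentially small amount. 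This gives \(\|(1-\Pi_\varepsilon^N)\widetilde\Pi_{\varepsilon,0}^D\|=O(\varepsilon^{-1}e^{-\kappa/\varepsilon})\).

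Both ranges are three-dimensional, so the standard fact \(\|P-Q\|=\|(1-P)Q\|\) for equal-rank projections with \(\|(1-P)Q\|<1\) gives the full bound. Finally, the \(\varepsilon^{-1}\) loss is absorbed by lowering \(\kappa\) slightly while keeping \(\kappa_{\rm P}>S\). The step that needs the most care is keeping every exponent strictly above \(S\). This comes down to choosing \(\rho'\in(S,\rho)\) and checking that the Agmon lemma applies to the Dirichlet problem and to the supports of \(1-\chi\) and \(\chi'\), which lie in \(\{d_A(\cdot,V^{-1}(0))\ge\rho\}\) by \eqref{eq:nd-cutoff}.
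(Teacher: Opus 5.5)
Your proof is correct. It differs from the paper's in two places.

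\textbf{Part (1).} You get \(\lambda_k^N\le\lambda_k^D\) exactly, because \(H_0^1(\Omega_0)\) extended by zero lies in \(H^1(\Omega)\) with the same quadratic form. The paper proves only \(\lambda_k^N\le\lambda_k^D+Ce^{-2\kappa_D/\varepsilon}\), by transplanting \(J_\chi w\) with \(w\in E_{\varepsilon,k}^D\), and this costs a Dirichlet Agmon estimate. Your other direction, cutting off the Neumann eigenfunctions, is the paper's.

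There is a small slip in that direction. For a linear combination \(f\) of \(u_1^N,\dots,u_k^N\), the bound \(\langle\chi^2Hf,f\rangle\le\lambda_k^N\|f\|^2\) is not automatic, since \(\langle(1-\chi^2)Hf,f\rangle\) has no sign. It does hold up to an error \(O(\varepsilon e^{-2\kappa/\varepsilon})\|f\|^2\), by Cauchy--Schwarz and the Agmon bound on \(\operatorname{supp}(1-\chi)\), and that is enough. The paper avoids the issue by using \(Q_D(\chi u)\le Q_N(u)+\varepsilon^2\|\chi\chi''\|_{L^\infty}\|u\|_{L^2(K_\chi)}^2\) together with the exact bound \(Q_N(u)\le\lambda_k^N\|u\|^2\).

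\textbf{Part (2).} You transplant in the opposite direction. The paper's steps are:
\begin{itemize}
\item it cuts the Neumann eigenfunctions off into \(H_0^1(\Omega_0)\);
\item it uses \(\lambda_4^D-\lambda_3^N\ge\gamma\varepsilon/2\) to bound \(\|(I-\widetilde\Pi_{\varepsilon,0}^D)\Pi_\varepsilon^N\|\);
\item it recovers \(\|(I-\Pi_\varepsilon^N)\widetilde\Pi_{\varepsilon,0}^D\|\) by a dimension count and a Pythagoras argument;
\item it combines the two through the identity for \((\Pi_\varepsilon^N-\widetilde\Pi_{\varepsilon,0}^D)^2\).
\end{itemize}
You take the quasimodes \(J_\chi e_k^D\), which span \(\widetilde E_{\varepsilon,0}^D\) itself. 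You use the Neumann gap \(\lambda_4^N-\lambda_k^D\ge\gamma\varepsilon/2\) for \(k\le3\) to bound \(\|(I-\Pi_\varepsilon^N)\widetilde\Pi_{\varepsilon,0}^D\|\) directly, and finish with the equal-rank fact.

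That fact holds for any two orthogonal projections of the same finite rank, with no smallness assumption. The map \(T=PQ|_{\operatorname{Ran}Q}\) goes between spaces of equal finite dimension, so \(T^*T\) and \(TT^*\) have the same least eigenvalue, which gives \(\|(I-P)Q\|=\|(I-Q)P\|\). Then \(\|P-Q\|=\max\{\|(I-P)Q\|,\|(I-Q)P\|\}\) yields the claim; this is essentially what the paper proves by hand.

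Your route is shorter and works with the natural spanning set of \(\widetilde E_{\varepsilon,0}^D\), at the price of orthonormalizing it through its Gram matrix. The paper's route is self-contained and works with the already orthonormal \(u_j^N\).
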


\begin{proof}
    \noindent\textup{(\ref{item:nd-eigenvalues})}
    Let \(\chi\) be the cutoff fixed in \eqref{eq:nd-cutoff}. Choose
    \(S<\rho_0<\rho_1<\rho\) and \(U\Subset\Omega_0\) such that
    \[
\{d_A(\cdot,V^{-1}(0))\le\rho_0\}\subset U
\subset
\{d_A(\cdot,V^{-1}(0))<\rho_1\},
\qquad\text{and}\qquad
0\le\chi\le1\quad\text{and}\quad \chi=1\ {\rm on}\ U.
    \]
    Extend \(\chi\) by zero from \(\Omega_0\) to \(\Omega\), and define
    \(K_\chi\), with all supports taken in \(\overline\Omega\), by
    \[
    K_\chi
    =
    \operatorname{supp}_{\overline\Omega}(1-\chi)
    \cup
    \operatorname{supp}_{\overline\Omega}\chi'
    \cup
    \operatorname{supp}_{\overline\Omega}\chi'' .
    \]
  
    Then \(d_A(K_\chi,V^{-1}(0))\ge\rho_0>S\). Lemma~\ref{lem:bound-first-four}
    and the same test functions supported in \(\Omega_0\) give constants
    \(C_{N,0},C_{D,0}>0\) and \(\varepsilon_0>0\) such that
    \[
\lambda_k^N(\varepsilon)\le C_{N,0}\varepsilon,
\qquad\text{and}\qquad
\lambda_k^D(\varepsilon)\le C_{D,0}\varepsilon,
\qquad 1\le k\le4,\quad 0<\varepsilon<\varepsilon_0 .
    \]
    {\color{black}Write}
    \[
Q_N(z)=\int_\Omega
\bigl(\varepsilon^2|z'(x)|^2+V(x)|z(x)|^2\bigr)\,\mathrm{d}x,
\qquad\text{and}\qquad
Q_D(y)=\int_{\Omega_0}
\bigl(\varepsilon^2|y'(x)|^2+V(x)|y(x)|^2\bigr)\,\mathrm{d}x .
    \]
    
    If \(u\in E_{\varepsilon,k}^{N}\), \(1\le k\le4\), write
    \(u=\sum_{m=1}^k a_m u_m^N\). Since
    \(\lambda_m^N(\varepsilon)\le C_{N,0}\varepsilon\) for \(1\le m\le4\),
    {\color{black}we can apply \eqref{eq:agmon-neumann-decay} to each \(u_m^N\), with constants
    independent of the choice among these four functions. By the Cauchy--Schwarz
    inequality, there exist} \(C_{N,1}>0\), \(\kappa_N>S\), and
    \(\varepsilon_N\in(0,\varepsilon_0)\) such that
    \begin{equation}
    \|u\|_{L^2(K_\chi)}
    +
    \varepsilon\|u'\|_{L^2(K_\chi)}
    \le
    C_{N,1} e^{-\kappa_N/\varepsilon}\|u\|_{L^2(\Omega)},
    \qquad 0<\varepsilon<\varepsilon_N .
    \label{eq:neumann-cutoff-decay}
    \end{equation}
    Since \(\operatorname{supp}_{\overline\Omega}(1-\chi^2)\subset K_\chi\),
    \eqref{eq:neumann-cutoff-decay} gives
    \[
    0\le
    \|u\|_{L^2(\Omega)}^2-\|\chi u\|_{L^2(\Omega_0)}^2
    =
    \int_\Omega(1-\chi^2(x))|u(x)|^2\,\mathrm{d}x
    \le
    C_{N,1}^2e^{-2\kappa_N/\varepsilon}\|u\|_{L^2(\Omega)}^2 .
    \]
    After decreasing \(\varepsilon_N\), if necessary, one has
    \(C_{N,1}^2e^{-2\kappa_N/\varepsilon}\le1/2\), and hence
    \[
    \|\chi u\|_{L^2(\Omega_0)}^2
    \ge
    \bigl(1-C_{N,1}^2e^{-2\kappa_N/\varepsilon}\bigr)
    \|u\|_{L^2(\Omega)}^2
    \ge\frac12\|u\|_{L^2(\Omega)}^2 .
    \]
    Since \(\chi\in C_c^\infty(\Omega_0)\), integration by parts gives
    \[
    \int_{\Omega_0}|(\chi u)'(x)|^2\,\mathrm{d}x
    =
    \int_{\Omega_0}\chi^2(x)|u'(x)|^2\,\mathrm{d}x
    -
    \int_{\Omega_0}\chi(x)\chi''(x)|u(x)|^2\,\mathrm{d}x .
    \]
    Put \(M_\chi:=\|\chi\chi''\|_{L^\infty(\Omega_0)}\). Since
    \(\operatorname{supp}\chi''\subset K_\chi\), \eqref{eq:neumann-cutoff-decay} yields
    \[
    Q_D(\chi u)
    \le
    Q_N(u)+M_\chi C_{N,1}^2\varepsilon^2e^{-2\kappa_N/\varepsilon}
    \|u\|_{L^2(\Omega)}^2 .
    \]
    The lower bound for \(\|\chi u\|_{L^2(\Omega_0)}\) shows that
    \(u\mapsto\chi u\) is injective on \(E_{\varepsilon,k}^{N}\). Hence
    \(\dim\chi E_{\varepsilon,k}^{N}=k\), and  for any $1\leq k\leq 4$
    \[
    \begin{aligned}
    \lambda_k^D(\varepsilon)
    &\le
    \sup_{0\ne u\in E_{\varepsilon,k}^{N}}
    \frac{Q_D(\chi u)}{\|\chi u\|_{L^2(\Omega_0)}^2}\le
    \frac{\lambda_k^N(\varepsilon)
    +M_\chi C_{N,1}^2\varepsilon^2e^{-2\kappa_N/\varepsilon}}
    {1-C_{N,1}^2e^{-2\kappa_N/\varepsilon}}
    \le
    \lambda_k^N(\varepsilon)+C_{N,3}e^{-2\kappa_N/\varepsilon}.
    \end{aligned}
    \]
    Here one may take
    \(C_{N,3}:=2(M_\chi+C_{N,0})C_{N,1}^2\), after decreasing
    \(\varepsilon_N\) once more so that \(0<\varepsilon<1\).
    
    {\color{black}To prove the reverse inequality, we interchange the global Neumann problem
    and the interior Dirichlet problem.} By the test functions
    from Lemma~\ref{lem:bound-first-four}, after decreasing \(\varepsilon_0\),
    one has
    \[
    \lambda_m^D(\varepsilon)\le C_{D,0}\varepsilon,
    \qquad 1\le m\le4,\quad 0<\varepsilon<\varepsilon_0.
    \]
    If \(w\in E_{\varepsilon,k}^{D}\), expand \(w\) in the first \(k\)
    Dirichlet eigenfunctions. The weighted identity
    \eqref{eq:agmon-weighted-identity} has zero boundary term here because
    \(w=0\) on \(\partial\Omega_0\). After fixing
    \(\kappa_D\in(S,\rho_0)\), there exist \(C_{D,1}>0\) and
    \(\varepsilon_D\in(0,\varepsilon_0)\), independent of
    \(k\in\{1,2,3,4\}\), such that
    \begin{equation}
    \|w\|_{L^2(K_\chi\cap\Omega_0)}
    +
    \varepsilon\|w'\|_{L^2(K_\chi\cap\Omega_0)}
    \le
    C_{D,1}e^{-\kappa_D/\varepsilon}
    \|w\|_{L^2(\Omega_0)},
    \qquad 0<\varepsilon<\varepsilon_D .
    \label{eq:dirichlet-cutoff-decay}
    \end{equation}
    Since \(J_\chi w=\chi w\) on \(\Omega_0\) and is zero outside
    \(\Omega_0\), \eqref{eq:dirichlet-cutoff-decay} gives
    \[
    \begin{aligned}
    \|J_\chi w\|_{L^2(\Omega)}^2
    &=
    \|w\|_{L^2(\Omega_0)}^2
    -
    \int_{\Omega_0}(1-\chi^2(x))|w(x)|^2\,\mathrm{d}x\ge
    \bigl(1-C_{D,1}^2e^{-2\kappa_D/\varepsilon}\bigr)
    \|w\|_{L^2(\Omega_0)}^2 .
    \end{aligned}
    \]
    After decreasing \(\varepsilon_D\), if necessary, the factor in
    parentheses is at least \(1/2\). Moreover, integration by parts gives
    \[
    \begin{aligned}
    Q_N(J_\chi w)
    &=
    Q_D(w)
    -
    \int_{\Omega_0}(1-\chi^2(x))
    \bigl(\varepsilon^2|w'(x)|^2+V(x)|w(x)|^2\bigr)\,\mathrm{d}x\\
    &\hspace{2em}
    -
    \varepsilon^2\int_{\Omega_0}\chi(x)\chi''(x)|w(x)|^2\,\mathrm{d}x\\
    &\le
    Q_D(w)
    +M_\chi C_{D,1}^2\varepsilon^2e^{-2\kappa_D/\varepsilon}
    \|w\|_{L^2(\Omega_0)}^2.
    \end{aligned}
    \]
    {\color{black}It follows from the norm estimate that
    \(J_\chi\) is injective} on \(E_{\varepsilon,k}^{D}\), so
   \(\dim J_\chi E_{\varepsilon,k}^{D}=k\). By the min--max principle, for any $1\le k\le4$,
    we have
    \[
    \begin{aligned}
    \lambda_k^N(\varepsilon)
    &\le
    \sup_{0\ne w\in E_{\varepsilon,k}^{D}}
    \frac{Q_N(J_\chi w)}{\|J_\chi w\|_{L^2(\Omega)}^2}\le
    \frac{\lambda_k^D(\varepsilon)
    +M_\chi C_{D,1}^2\varepsilon^2e^{-2\kappa_D/\varepsilon}}
    {1-C_{D,1}^2e^{-2\kappa_D/\varepsilon}}\le
    \lambda_k^D(\varepsilon)+C_{D,3}e^{-2\kappa_D/\varepsilon},
    \end{aligned}
    \]
    where one may take
    \(C_{D,3}:=2(M_\chi+C_{D,0})C_{D,1}^2\), after decreasing
    \(\varepsilon_D\) so that \(0<\varepsilon<1\). Choose
    $
    S<\kappa_{\rm V}<
    \min\{\kappa_N,\kappa_D\}.
    $
    The two eigenvalue inequalities prove~\textup{(\ref{item:nd-eigenvalues})} with
   $
    C_{\rm V}:=C_{N,3}+C_{D,3},
 $ and $
    \varepsilon_{\rm V}:=\min\{\varepsilon_N,\varepsilon_D\}.
   $
    
    \medskip
    \noindent\textup{(\ref{item:nd-spectral-spaces})}
    Apply \eqref{eq:dirichlet-cutoff-decay} with \(k=3\) and the cutoff \(\chi\) in \eqref{eq:nd-cutoff}. After
    decreasing \(\varepsilon_D\), if necessary, one has
    \[
    \|J_\chi w\|_{L^2(\Omega)}
    \ge
    2^{-1/2}\|w\|_{L^2(\Omega_0)},
    \qquad w\in E_{\varepsilon,3}^{D}.
    \]
    Thus \(J_\chi:E_{\varepsilon,3}^{D}\to L^2(\Omega)\) is injective and
    \(\dim\widetilde E_{\varepsilon,0}^D=3\).
    
    By~\textup{(\ref{item:nd-eigenvalues})} and
    \(\lambda_4^D(\varepsilon)-\lambda_3^D(\varepsilon)\ge\gamma\varepsilon\), one has
    \[
    \lambda_4^D(\varepsilon)-\lambda_3^N(\varepsilon)
    \ge
    \frac{\gamma}{2}\varepsilon
    \]
    for all sufficiently small \(\varepsilon>0\). For \(j=1,2,3\), set
    \(v_j=\chi u_j^N|_{\Omega_0}\in H_0^1(\Omega_0)\). The product rule gives
    \[
    (H_\varepsilon^{D}-\lambda_j^N)v_j
    =
    -\varepsilon^2\chi''u_j^N
    -
    2\varepsilon^2\chi'(u_j^N)',
    \qquad x\in\Omega_0.
    \]
    By the definition of \(J_\chi\) and the inclusion
    \(\operatorname{supp}(1-\chi^2)\cup
    \operatorname{supp}\chi'\cup\operatorname{supp}\chi''
    \subset K_\chi\), we have
    \[
    \begin{aligned}
    \|u_j^N-J_\chi v_j\|_{L^2(\Omega)}
    &=
    \|(1-\chi^2)u_j^N\|_{L^2(\Omega)},
     \end{aligned}
    \]
    and
    \[
    \begin{aligned}
    \|(H_\varepsilon^{D}-\lambda_j^N)v_j\|_{L^2(\Omega_0)}
    &\le
    \varepsilon^2\|\chi''\|_{L^\infty(\Omega_0)}
    \|u_j^N\|_{L^2(K_\chi)}
    +
    2\varepsilon^2\|\chi'\|_{L^\infty(\Omega_0)}
    \|(u_j^N)'\|_{L^2(K_\chi)}.
    \end{aligned}
    \]
    By \eqref{eq:neumann-cutoff-decay}, there exists \(C_{\rm R}>0\) such that, for
    \(0<\varepsilon<\min\{1,\varepsilon_N\}\),
    \begin{equation}
    \|u_j^N-J_\chi v_j\|_{L^2(\Omega)}
    +
    \|(H_\varepsilon^{D}-\lambda_j^N)v_j\|_{L^2(\Omega_0)}
    \le
    C_{\rm R}e^{-\kappa_N/\varepsilon}.
    \label{eq:cutoff-residual}
    \end{equation}
    Put
    $
    w_j:=(I-\Pi_{\varepsilon,0}^D)v_j .
    $
    If \(v_j=\sum_{k\ge1}a_{jk}e_k^D\), where
    \(H_\varepsilon^{D}e_k^D=\lambda_k^D e_k^D\), then
    \[
    \begin{aligned}
    \|(H_\varepsilon^{D}-\lambda_j^N)w_j\|_{L^2(\Omega_0)}^2
   =
    \sum_{k\ge4}
    |\lambda_k^D-\lambda_j^N|^2|a_{jk}|^2
    &\ge
    \frac{\gamma^2\varepsilon^2}{4}
    \|w_j\|_{L^2(\Omega_0)}^2 .
    \end{aligned}
    \]
    Since \(\Pi_{\varepsilon,0}^D\) is a spectral projection of
    \(H_\varepsilon^{D}\), the two operators commute, and hence
    \[
    (H_\varepsilon^{D}-\lambda_j^N)w_j
    =
    (I-\Pi_{\varepsilon,0}^D)
    (H_\varepsilon^{D}-\lambda_j^N)v_j .
    \]
    Hence
    \[
    \begin{aligned}
    \frac{\gamma\varepsilon}{2}\|w_j\|_{L^2(\Omega_0)}
    &\le
    \|(H_\varepsilon^{D}-\lambda_j^N)w_j\|_{L^2(\Omega_0)}\le
    \|(H_\varepsilon^{D}-\lambda_j^N)v_j\|_{L^2(\Omega_0)}
    \le
    C_{\rm R} e^{-\kappa_N/\varepsilon}.
    \end{aligned}
    \]
    Choose \(S<\kappa<\kappa_N\). Since
    \(\varepsilon^{-1}e^{-\kappa_N/\varepsilon}\le
    C_{\rm A}e^{-\kappa/\varepsilon}\) for some \(C_{\rm A}>0\) and all
    sufficiently small \(\varepsilon>0\), one has
    \[
    \|(I-\Pi_{\varepsilon,0}^D)v_j\|_{L^2(\Omega_0)}
    \le
    \frac{2C_{\rm R}C_{\rm A}}{\gamma}
    e^{-\kappa/\varepsilon}.
    \]
    Therefore, there exists $ C_{\rm I}>0$ such that
    \[
    \begin{aligned}
    \|(I-\widetilde\Pi_{\varepsilon,0}^D)u_j^N\|_{L^2(\Omega)}
    &\le
    \|u_j^N-J_\chi\Pi_{\varepsilon,0}^Dv_j\|_{L^2(\Omega)}\\
    &\le
    \|u_j^N-J_\chi v_j\|_{L^2(\Omega)}
    +
    \|J_\chi(I-\Pi_{\varepsilon,0}^D)v_j\|_{L^2(\Omega)}\\
    &\le
    C_{\rm I} e^{-\kappa/\varepsilon}.
    \end{aligned}
    \]
    For \(u=\sum_{j=1}^3a_j u_j^N\), the Cauchy--Schwarz inequality gives
    \(\sum_{j=1}^3|a_j|\le\sqrt3\|u\|_{L^2(\Omega)}\). Consequently, one has
    \[
    \|(I-\widetilde\Pi_{\varepsilon,0}^D)\Pi_\varepsilon^N\|
    \le
    \sqrt3\,C_{\rm I} e^{-\kappa/\varepsilon}.
    \]
    Set \(\eta_\varepsilon=\sqrt3\,C_{\rm I}e^{-\kappa/\varepsilon}\). For small
    \(\varepsilon\), \(\eta_\varepsilon<1/2\). If \(u\in E_{\varepsilon,3}^{N}\), then
    \[
    \|\widetilde\Pi_{\varepsilon,0}^D u\|_{L^2(\Omega)}
    \ge
    (1-\eta_\varepsilon)\|u\|_{L^2(\Omega)} .
    \]
    Hence the map
    \(\widetilde\Pi_{\varepsilon,0}^D:E_{\varepsilon,3}^{N}\to
    \widetilde E_{\varepsilon,0}^D\) is injective. Since both spaces have
    dimension \(3\), it is onto. Thus, for every
    \(z\in\widetilde E_{\varepsilon,0}^D\), there is \(u\in E_{\varepsilon,3}^{N}\)
    such that \(z=\widetilde\Pi_{\varepsilon,0}^Du\), and
    \[
    \|u\|_{L^2(\Omega)}
    \le
    (1-\eta_\varepsilon)^{-1}\|z\|_{L^2(\Omega)} .
    \]
    Since \(\Pi_\varepsilon^N\) is the \(L^2(\Omega)\)-orthogonal projection
    onto \(E_{\varepsilon,3}^{N}\) and \(u\in E_{\varepsilon,3}^{N}\), the Pythagorean identity gives
    \[
    \begin{aligned}
    \|z-u\|_{L^2(\Omega)}^2
    &=
    \|(I-\Pi_\varepsilon^N)z\|_{L^2(\Omega)}^2
    +
    \|\Pi_\varepsilon^Nz-u\|_{L^2(\Omega)}^2\ge
    \|(I-\Pi_\varepsilon^N)z\|_{L^2(\Omega)}^2 .
    \end{aligned}
    \]
    Thus
    \[
    \begin{aligned}
    \|(I-\Pi_\varepsilon^N)z\|_{L^2(\Omega)}
    &\le
    \|z-u\|_{L^2(\Omega)}
    =
    \|(I-\widetilde\Pi_{\varepsilon,0}^D)u\|_{L^2(\Omega)}
   \le
    \eta_\varepsilon(1-\eta_\varepsilon)^{-1}
    \|z\|_{L^2(\Omega)} .
    \end{aligned}
    \]
    Taking the supremum over
    \(0\ne z\in\widetilde E_{\varepsilon,0}^D\) gives
    \[
    \|(I-\Pi_\varepsilon^N)\widetilde\Pi_{\varepsilon,0}^D\|
    \le
    \frac{\eta_\varepsilon}{1-\eta_\varepsilon}
    \le 2\sqrt3\,C_{\rm I}e^{-\kappa/\varepsilon}.
    \]
    For the two orthogonal projections used here, one has
    \[
    (\Pi_\varepsilon^N-\widetilde\Pi_{\varepsilon,0}^D)^2
    =
    \Pi_\varepsilon^N(I-\widetilde\Pi_{\varepsilon,0}^D)\Pi_\varepsilon^N
    +
    (I-\Pi_\varepsilon^N)\widetilde\Pi_{\varepsilon,0}^D
    (I-\Pi_\varepsilon^N).
    \]
    The two terms on the right act on the orthogonal subspaces
    \(E_{\varepsilon,3}^{N}\) and \((E_{\varepsilon,3}^{N})^\perp\), respectively. More
    precisely, one has
    \[
    \Pi_\varepsilon^N(I-\widetilde\Pi_{\varepsilon,0}^D)\Pi_\varepsilon^N
    =
    \bigl[(I-\widetilde\Pi_{\varepsilon,0}^D)\Pi_\varepsilon^N\bigr]^*
    \bigl[(I-\widetilde\Pi_{\varepsilon,0}^D)\Pi_\varepsilon^N\bigr]
    \]
    and
    \[
    (I-\Pi_\varepsilon^N)\widetilde\Pi_{\varepsilon,0}^D(I-\Pi_\varepsilon^N)
    =
    \bigl[(I-\Pi_\varepsilon^N)\widetilde\Pi_{\varepsilon,0}^D\bigr]
    \bigl[(I-\Pi_\varepsilon^N)\widetilde\Pi_{\varepsilon,0}^D\bigr]^*.
    \]
    Since
    \(\Pi_\varepsilon^N-\widetilde\Pi_{\varepsilon,0}^D\) is self-adjoint, one has
    \[
    \begin{aligned}
    \|\Pi_\varepsilon^N-\widetilde\Pi_{\varepsilon,0}^D\|^2
    &=
    \|(\Pi_\varepsilon^N-\widetilde\Pi_{\varepsilon,0}^D)^2\|=
    \max\left\{
    \|(I-\widetilde\Pi_{\varepsilon,0}^D)\Pi_\varepsilon^N\|^2,
    \|(I-\Pi_\varepsilon^N)\widetilde\Pi_{\varepsilon,0}^D\|^2
    \right\}.
    \end{aligned}
    \]
    {\color{black}By the two one-sided estimates, we obtain}
    \[
    \|\widetilde\Pi_{\varepsilon,0}^D-\Pi_\varepsilon^N\|
    =
    \max
    \left\{
    \|(I-\widetilde\Pi_{\varepsilon,0}^D)\Pi_\varepsilon^N\|,
    \|(I-\Pi_\varepsilon^N)\widetilde\Pi_{\varepsilon,0}^D\|
    \right\}
    \le
    2\sqrt3\,C_{\rm I} e^{-\kappa/\varepsilon}.
    \]
    This proves~\textup{(\ref{item:nd-spectral-spaces})}, with
    \(C_{\rm P}=2\sqrt3\,C_{\rm I}\) and \(\kappa_{\rm P}=\kappa\).
\end{proof}

Put
\[
B=
\begin{pmatrix}
    0&-1&0\\
    -1&0&-1\\
    0&-1&0
\end{pmatrix}.
\]

\begin{lemma}
    \label{lem:tunneling-matrix}
    Assume \eqref{ass:tunneling}. Then
   $
    W_\varepsilon=\tau_\varepsilon B+o(\tau_\varepsilon).
   $
\end{lemma}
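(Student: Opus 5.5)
Since \(W_\varepsilon\) is real symmetric with \(w_{jj}=0\) by \eqref{eq:interaction-matrix-definition}, it suffices to identify the three off-diagonal entries \(w_{12}\), \(w_{23}\), \(w_{13}\) modulo \(o(\tau_\varepsilon)\). The diagonal of \(\tau_\varepsilon B\) is zero, so the diagonal entries agree exactly.

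For the adjacent pairs \((1,2)\) and \((2,3)\), I invoke \eqref{eq:interaction-entry-asymptotics} together with \eqref{ass:tunneling}\textup{(\ref{item:tunneling-actions})} and \textup{(\ref{item:tunneling-coefficients})}. These give \(S_{12}=S_{23}=S\) and \(b_{12}^{\,0}=b_{23}^{\,0}=\mathfrak b_0\), hence
\[
w_{jk}=-\varepsilon^{1/2}\bigl(\mathfrak b_0+O(\varepsilon)\bigr)e^{-S/\varepsilon}
=-\tau_\varepsilon+O(\varepsilon\tau_\varepsilon),
\]
which is \(-\tau_\varepsilon+o(\tau_\varepsilon)\). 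This matches the entries \(-1\) of \(B\) in positions \((1,2)\) and \((2,3)\), and by symmetry also in \((2,1)\) and \((3,2)\).

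The remaining and main step is \(w_{13}=o(\tau_\varepsilon)\), since \(B_{13}=0\). The integrand in \eqref{eq:interaction-matrix-definition} is supported in \(\Omega_1\cap\Omega_3\cap(\operatorname{supp}\vartheta_1'\cup\operatorname{supp}\vartheta_3')\); if \(\Omega_1\cap\Omega_3=\varnothing\) we simply get \(w_{13}=0\). Otherwise I use Agmon decay for the local Dirichlet ground states, proved exactly as in Lemma~\ref{lem:agmon-neumann} but with the Dirichlet boundary term vanishing on \(\partial\Omega_j\). For every \(\delta>0\) and small \(\varepsilon\) this gives
\[
\|e^{(1-\delta)d_A(\cdot,x_j)/\varepsilon}\xi_{j,\varepsilon}\|_{L^2(\Omega_j)}
+\varepsilon\|e^{(1-\delta)d_A(\cdot,x_j)/\varepsilon}\xi_{j,\varepsilon}'\|_{L^2(\Omega_j)}
\le C_\delta e^{\delta/\varepsilon},
\]
using that \(\mu_1^{(j)}=O(\varepsilon)\) by the harmonic test functions of Lemma~\ref{lem:bound-first-four}. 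By Cauchy--Schwarz the integrand of \(w_{13}\) is then bounded by
\[
C\varepsilon\, e^{-(1-\delta)\inf_{x}(d_A(x,x_1)+d_A(x,x_3))/\varepsilon+2\delta/\varepsilon}.
\]
The triangle inequality gives \(d_A(x,x_1)+d_A(x,x_3)\ge S_{13}\), so
\(|w_{13}|\le C\varepsilon e^{-((1-\delta)S_{13}-2\delta)/\varepsilon}\). Since \(S_{13}>S\) by \eqref{ass:tunneling}\textup{(\ref{item:tunneling-actions})}, choosing \(\delta\) small enough that \((1-\delta)S_{13}-2\delta>S\) yields \(w_{13}=O(e^{-\kappa/\varepsilon})\) with \(\kappa>S\), which is \(o(\varepsilon^{1/2}e^{-S/\varepsilon})=o(\tau_\varepsilon)\).

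The delicate point is the weighted Agmon estimate up to the moving boundary \(\partial\Omega_j\), which needs the weight \(d_A(\cdot,x_j)\) rather than the distance to the whole zero set. Inside \(\Omega_j\) the only well is \(x_j\), so \(\Phi=(1-\delta)d_A(\cdot,x_j)\) satisfies \(|\Phi'|^2\le(1-\delta)^2V\), and the argument of Lemma~\ref{lem:agmon-neumann} goes through verbatim. Alternatively, one could cite Helffer \cite[Theorem~4.4.8]{Helffer1988} directly, which already yields \(w_{13}=\widetilde O(e^{-S_{13}/\varepsilon})\). Assembling the three entries with symmetry gives \(W_\varepsilon=\tau_\varepsilon B+o(\tau_\varepsilon)\).
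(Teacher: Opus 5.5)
Your proposal is correct. For the diagonal and the adjacent entries it matches the paper: both read off $w_{12},w_{23}$ from Helffer's Theorem~4.4.8 together with \eqref{ass:tunneling}\textup{(\ref{item:tunneling-coefficients})}. The paper's proof also checks the hypotheses of that theorem, namely the harmonic asymptotics of the local eigenvalues via Simon's theorem, the local gap, conditions (4.3.17)--(4.3.18), and the one-dimensional geometric conditions. You instead take \eqref{eq:interaction-entry-asymptotics} as already packaged in \eqref{ass:tunneling}.

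The genuine difference is $w_{13}$. The paper notes that in one dimension $S_{13}=S_{12}+S_{23}=2S>2r$. By the triangle inequality this forces $\Omega_1\cap\Omega_3=\varnothing$, so $w_{13}=0$ identically. Your ``otherwise'' case therefore never occurs here, and the weighted Agmon step is not needed.

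That fallback argument is still sound. The weight $(1-\delta)d_A(\cdot,x_j)$ is admissible on $\Omega_j$, which contains only the well $x_j$, and it gives the weighted $L^2$ and $\varepsilon$-gradient bounds. Cauchy--Schwarz together with $d_A(x,x_1)+d_A(x,x_3)\ge S_{13}>S$ then yields $w_{13}=O(e^{-\kappa/\varepsilon})$ with $\kappa>S$.

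What your route buys is robustness: it uses only $S_{13}>S$, so it would survive when the local domains of non-adjacent wells overlap, as can happen in higher dimensions or whenever $S_{13}<2r$. The paper's one-line argument instead exploits the additivity of the Agmon distance along the line, and it gives exact vanishing rather than an exponentially small bound.
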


\begin{proof}
For each \(j\in\{1,2,3\}\), extend
\(V|_{\overline\Omega_j}\) to a nonnegative function
\(\widetilde V_j\in C^\infty(\mathbb R)\) such that \(x_j\) is its only
zero and \(\widetilde V_j=1\) outside a compact set. Let
\(\widetilde\mu_k^{(j)}(\varepsilon)\) be the \(k\)-th eigenvalue of
\[
-\varepsilon^2\frac{\mathrm d^2}{\mathrm dx^2}+\widetilde V_j
\quad\hbox{on }L^2(\mathbb R).
\]
Since \(\widetilde V_j''(x_j)=V''(x_j)>0\),
\cite[Theorem~1.1]{Simon1983}, applied with \(\lambda=\varepsilon^{-1}\),
gives
\begin{equation}
\widetilde\mu_k^{(j)}(\varepsilon)
=\varepsilon\nu_k^{(j)}+o(\varepsilon),
\qquad k=1,2,3.
\label{eq:extended-local-harmonic-asymptotics}
\end{equation}
Choose \(C_{*,j}\) strictly between the second and third eigenvalues of
\(\mathcal H_j\). It follows that the full-line operator has exactly two
eigenvalues in \([0,C_{*,j}\varepsilon]\) for all sufficiently small
\(\varepsilon\). The multiplicity-preserving domain comparison in
\cite[Lemma~3.4.3]{Helffer1988} identifies these eigenvalues with the
first two eigenvalues of \(P_j(\varepsilon)\) and gives constants
\(C_j,c_j>0\), independent of \(\varepsilon\), such that
\[
\left|\mu_k^{(j)}(\varepsilon)
-\widetilde\mu_k^{(j)}(\varepsilon)\right|
\le C_j e^{-c_j/\varepsilon},
\qquad k=1,2.
\]
Together with \eqref{eq:extended-local-harmonic-asymptotics}, this proves
\begin{equation}
\mu_k^{(j)}(\varepsilon)
=\varepsilon\nu_k^{(j)}+o(\varepsilon),
\qquad k=1,2,\quad j=1,2,3.
\label{eq:local-harmonic-asymptotics}
\end{equation}
The harmonic oscillator satisfies
\(\nu_2^{(j)}-\nu_1^{(j)}=2\sqrt{V''(x_j)/2}>0\).
Thus, with \(\gamma_j=(\nu_2^{(j)}-\nu_1^{(j)})/2\),
\eqref{eq:local-harmonic-asymptotics} yields
\begin{equation}
\mu_2^{(j)}(\varepsilon)-\mu_1^{(j)}(\varepsilon)
\ge\gamma_j\varepsilon,
\qquad j=1,2,3,
\label{eq:local-dirichlet-gap}
\end{equation}
for all sufficiently small \(\varepsilon>0\).

{\color{black}By the Dirichlet decay estimate in
\cite[Proposition~3.3.5]{Helffer1988}, the local eigenfunctions satisfy
\cite[(4.3.18)]{Helffer1988}.}
\begingroup
\color{black}
Moreover, \eqref{eq:local-eigenvalue-assumption} implies
\begin{equation}
\mu_1^{(j)}(\varepsilon)-\mu_1^{(k)}(\varepsilon)
=o\!\left(\varepsilon^{1/2}e^{-S/\varepsilon}\right)=O(\varepsilon^N),
\qquad j,k=1,2,3,
\label{eq:local-eigenvalue-matching}
\end{equation}
for every \(N>0\), which verifies condition
\cite[(4.3.17)]{Helffer1988}.
\par
\endgroup

In one dimension, \eqref{ass:tunneling}\textup{(\ref{item:tunneling-actions})}
gives \(S_{13}=S_{12}+S_{23}=2S>2r\), so
\(\Omega_1\cap\Omega_3=\varnothing\). By
\eqref{eq:interaction-matrix-definition},
$
w_{13}=w_{31}=0.
$
For each adjacent pair \((j,k)=(1,2),(2,3)\), the interval
\([x_j,x_k]\) is the unique shortest path joining the two points in
the Agmon metric. {\color{black}A point in \(\Omega_j\cap\Omega_k\) is a
transverse section, since there are no tangential directions in one dimension.}
Thus the geometric hypotheses
\cite[(4.4.41)--(4.4.42)]{Helffer1988} hold.
By \cite[Theorem~4.4.8]{Helffer1988} and
\eqref{ass:tunneling}\textup{(\ref{item:tunneling-coefficients})},
\begin{equation*}
\begin{aligned}
w_{12}
&=-\varepsilon^{1/2}\bigl(b_{12}^{\,0}+O(\varepsilon)\bigr)
e^{-S/\varepsilon}
=-\tau_\varepsilon+o(\tau_\varepsilon),\\
w_{23}
&=-\varepsilon^{1/2}\bigl(b_{23}^{\,0}+O(\varepsilon)\bigr)
e^{-S/\varepsilon}
=-\tau_\varepsilon+o(\tau_\varepsilon).
\end{aligned}
\end{equation*}
The same asymptotics hold for \(w_{21}\) and \(w_{32}\). Thus,
\begin{equation}
W_\varepsilon=\tau_\varepsilon B+o(\tau_\varepsilon).
\label{eq:interaction-matrix-limit}
\end{equation}
\end{proof}

\begin{lemma}
    \label{lem:low-energy-reduction}
    Assume \eqref{ass:tunneling}. There is an orthonormal basis of \(E_{\varepsilon,3}^{N}\) such that, for some
    \(\kappa>S\), one has
    \[
    \Pi_\varepsilon^NH_\varepsilon^N\Pi_\varepsilon^N
    =
    \mathcal M_{\varepsilon,0}^{D}+O(e^{-\kappa/\varepsilon})
    =
    \mathcal M_{\varepsilon,0}^{D}+o(\tau_\varepsilon).
    \]
\end{lemma}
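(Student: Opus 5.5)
We will obtain the basis of \(E_{\varepsilon,3}^{N}\) by transporting an orthonormal basis of \(E_{\varepsilon,3}^{D}\) through \(J_\chi\) and \(\Pi_\varepsilon^N\), and then compare the two compressed matrices using Lemma~\ref{lem:ND-comparison}.

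\emph{Step 1: the gap hypothesis.} Lemma~\ref{lem:ND-comparison}\textup{(\ref{item:nd-spectral-spaces})} requires \(\lambda_4^D-\lambda_3^D\ge\gamma\varepsilon\). By the standard multiwell comparison (Helffer's localization, using the local Dirichlet problems \(P_j(\varepsilon)\) on the \(\Omega_j\) together with the local gap \eqref{eq:local-dirichlet-gap} and the harmonic asymptotics \eqref{eq:local-harmonic-asymptotics}), the first three Dirichlet eigenvalues on \(\Omega_0\) are \(\varepsilon\nu_1^{(1)}+o(\varepsilon)\). Here \eqref{eq:local-eigenvalue-matching} forces \(\nu_1^{(j)}\) to coincide for \(j=1,2,3\). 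The fourth eigenvalue is at least \(\varepsilon\min_j\nu_2^{(j)}+o(\varepsilon)\). An IMS partition of unity gives the lower bound on \(\lambda_4^D\) with \(\gamma=\min_j\gamma_j\). This yields \(\|\widetilde\Pi_{\varepsilon,0}^D-\Pi_\varepsilon^N\|\le C_{\rm P}e^{-\kappa_{\rm P}/\varepsilon}\).

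\emph{Step 2: construction of the basis.} Let \(e_1^D,e_2^D,e_3^D\) be the Dirichlet eigenbasis. Then \(\mathcal M_{\varepsilon,0}^D=\operatorname{diag}(\lambda_1^D,\lambda_2^D,\lambda_3^D)\) in this basis, or in any orthonormal basis of \(E^D_{\varepsilon,3}\) obtained from it by a fixed unitary \(U\). Set \(f_k=\Pi_\varepsilon^NJ_\chi e_k^D\). By \eqref{eq:dirichlet-cutoff-decay}, the Gram matrix of the \(J_\chi e_k^D\) is \(I+O(e^{-2\kappa_D/\varepsilon})\). By Step 1, the \(f_k\) differ from the \(J_\chi e_k^D\) by \(O(e^{-\kappa_{\rm P}/\varepsilon})\). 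Hence the Gram matrix \(G\) of the \(f_k\) is \(I+O(e^{-\kappa/\varepsilon})\), and the symmetric orthonormalization \(g=G^{-1/2}f\) (applied after \(U\), if another basis of \(E^D_{\varepsilon,3}\) is wanted) gives an orthonormal basis of \(E_{\varepsilon,3}^N\) that is exponentially close to \(J_\chi e^D\).

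\emph{Step 3: matrix entries.} We need
\[
\langle H_\varepsilon^N g_k,g_l\rangle=\langle H_\varepsilon^D e_k^D,e_l^D\rangle+O(e^{-\kappa/\varepsilon}).
\]
Write \(g_k=J_\chi e_k^D+r_k\) with \(\|r_k\|=O(e^{-\kappa/\varepsilon})\). Since \(\Pi^N\) commutes with \(H^N\) and \(\|H^N\Pi^N\|\le C\varepsilon\) by Lemma~\ref{lem:bound-first-four}, we get
\[
\langle H^Ng_k,g_l\rangle=\langle H^N\Pi^NJ_\chi e_k^D,\Pi^N J_\chi e_l^D\rangle+O(\varepsilon e^{-\kappa/\varepsilon}).
\]
We then write \(H^N\Pi^N=\Pi^N H^N\) acting on \(J_\chi e_k^D\), which lies in the form domain. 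The commutator identity
\[
H^NJ_\chi e_k^D=J_\chi H^D e_k^D-\varepsilon^2(\chi''e_k^D+2\chi'(e_k^D)')
\]
has a remainder that is exponentially small by \eqref{eq:dirichlet-cutoff-decay}. Thus
\[
\langle H^Ng_k,g_l\rangle=\lambda_k^D\langle J_\chi e_k^D,\Pi^NJ_\chi e_l^D\rangle+O(e^{-\kappa/\varepsilon}),
\]
and the inner product equals \(\delta_{kl}+O(e^{-\kappa/\varepsilon})\). Any \(\kappa\in(S,\min\{\kappa_{\rm P},\kappa_D\})\) works, where the factors \(\varepsilon\) are absorbed by slightly decreasing \(\kappa\). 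Finally, \(e^{-\kappa/\varepsilon}=o(\varepsilon^{1/2}e^{-S/\varepsilon})=o(\tau_\varepsilon)\) since \(\kappa>S\).

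\emph{Main obstacle.} The main obstacle is Step 1, the linear-in-\(\varepsilon\) gap between \(\lambda_3^D\) and \(\lambda_4^D\). It needs a lower bound on \(\lambda_4^D\) via a partition of unity \(\sum\chi_j^2=1\) adapted to the wells: the localization error is \(O(\varepsilon^2)\), and the quadratic form splits into three local harmonic-oscillator pieces plus a region where \(V\) is bounded below. I would cite Helffer's (or Simon's) multiwell result \cite{Helffer1988,Simon1983} rather than redo this argument.
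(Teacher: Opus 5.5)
Your proposal is correct and follows essentially the same route as the paper. The symmetric orthonormalization \(g=G^{-1/2}f\) of \(f_k=\Pi_\varepsilon^NJ_\chi e_k^D\) is exactly the paper's polar factor \(U_\varepsilon=T_\varepsilon(T_\varepsilon^*T_\varepsilon)^{-1/2}\) with \(T_\varepsilon=\Pi_\varepsilon^NJ_\chi\), and your entry comparison uses the same ingredients: \eqref{eq:transfer-commutator}, \eqref{eq:dirichlet-cutoff-decay}, and Lemma~\ref{lem:ND-comparison}\textup{(\ref{item:nd-spectral-spaces})}. The only differences are minor: you get the gap \(\lambda_4^D-\lambda_3^D\ge\gamma\varepsilon\) from an IMS lower bound, which is only \(o(\varepsilon)\)-accurate but suffices, instead of the paper's exponentially accurate comparison of \(H_\varepsilon^D\) with \(P_1\oplus P_2\oplus P_3\) via Helffer's Theorem~4.2.1; and you shorten the paper's chain of inner-product comparisons by using \(\Pi_\varepsilon^NH_\varepsilon^N=H_\varepsilon^N\Pi_\varepsilon^N\) on \(J_\chi e_k^D\in D(H_\varepsilon^N)\).
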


\begin{proof}
    {\color{black}Let \(\chi\) be the cutoff fixed in \eqref{eq:nd-cutoff}. Then}
    \[
    d_A(\operatorname{supp}(1-\chi),V^{-1}(0))>S,\qquad
    \text{and}\qquad
    d_A(\operatorname{supp}\chi'\cup\operatorname{supp}\chi'',V^{-1}(0))>S.
    \]
    {\color{black}For \(u\in E_{\varepsilon,3}^{D}\), we use the extension \(J_\chi\) defined}
    before Lemma~\ref{lem:ND-comparison}. Applying
    \eqref{eq:dirichlet-cutoff-decay} with \(k=3\) and the cutoff in
    \eqref{eq:nd-cutoff} gives constants
    \(C_1>0\), \(\kappa_1>S\), and \(\varepsilon_1>0\) such that
    \begin{equation}
    \|J_\chi u-u\|_{L^2(\Omega_0)}
    \le
    C_1 e^{-\kappa_1/\varepsilon}\|u\|_{L^2(\Omega_0)},\qquad 0<\varepsilon<\varepsilon_1.
    \label{eq:transfer-cutoff-norm}
    \end{equation}
   On \(\Omega_0\),
    a direct calculation gives
    \begin{equation}
    (H_\varepsilon^N J_\chi-J_\chi H_\varepsilon^{D})u
    =
    -\varepsilon^2\bigl(2\chi' u'+\chi''u\bigr),
    \label{eq:transfer-commutator}
    \end{equation}
    and this function is extended by zero to \(\Omega\setminus\Omega_0\).
    {\color{black}By \eqref{eq:dirichlet-cutoff-decay}, we have estimates for both \(u\) and
    \(\varepsilon u'\) on}
    \(\operatorname{supp}\chi'\cup\operatorname{supp}\chi''\).
    {\color{black}Combining this estimate with \eqref{eq:transfer-cutoff-norm} and
    \eqref{eq:transfer-commutator}, for \(0<\varepsilon<\min\{1,\varepsilon_1\}\)
    sufficiently small, we obtain constants \(C_2>0\) and \(\kappa_2>S\) such that}
    \begin{equation}
    \|J_\chi u-u\|_{L^2(\Omega_0)}
    +
    \|(H_\varepsilon^N J_\chi
    -
    J_\chi H_\varepsilon^{D})u\|_{L^2(\Omega)}
    \le
    C_2 e^{-\kappa_2/\varepsilon}\|u\|_{L^2(\Omega_0)}.
    \label{eq:transfer-residual}
    \end{equation}
    
    With \(\gamma_*:=\min_{1\le j\le3}\gamma_j>0\), the local gap
    \eqref{eq:local-dirichlet-gap} and \eqref{ass:tunneling} give
    \[
    \mu_2^{(j)}(\varepsilon)-\mu_1^{(j)}(\varepsilon)
    \ge
    \gamma_*\varepsilon,
    \qquad\text{and}\qquad
    \max_j\mu_1^{(j)}(\varepsilon)-\min_j\mu_1^{(j)}(\varepsilon)
    =o(\varepsilon).
    \]
    {\color{black}To order the local eigenvalues, set}
\(P(\varepsilon)=P_1(\varepsilon)\oplus P_2(\varepsilon)\oplus
P_3(\varepsilon)\), and denote its eigenvalues by
\(\eta_1(\varepsilon)\le\eta_2(\varepsilon)\le\cdots\).
By the direct-sum spectral identity
\cite[Chapter~V, Section~3.9]{Kato1984},
\[
\sigma(P(\varepsilon))=\bigcup_{j=1}^3\sigma(P_j(\varepsilon))
=\{\mu_\ell^{(j)}(\varepsilon):\ell\ge1,\ 1\le j\le3\},
\]
with eigenvalues counted with multiplicity. The preceding estimates imply
\[
\min_j\mu_2^{(j)}-\max_j\mu_1^{(j)}
\ge\gamma_*\varepsilon-
\bigl(\max_j\mu_1^{(j)}-\min_j\mu_1^{(j)}\bigr)
\ge\frac{\gamma_*}{2}\varepsilon>0
\]
for all sufficiently small \(\varepsilon\). Then we have
\[
\{\eta_1,\eta_2,\eta_3\}
=\{\mu_1^{(1)},\mu_1^{(2)},\mu_1^{(3)}\},\qquad
\eta_4=\min_j\mu_2^{(j)}.
\]
In particular, with \(\gamma_2=\gamma_*/2\), one has
$
\eta_4(\varepsilon)-\eta_3(\varepsilon)\ge\gamma_2\varepsilon.
$

    Apply Helffer \cite[Theorem~4.2.1]{Helffer1988} to
    \(P(\varepsilon)\) and \(H_\varepsilon^{D}\) in a low-energy interval
    containing at least the first four eigenvalues, with its endpoints
    separated from the local harmonic spectra after division by \(\varepsilon\).
    Ordering the two spectra and restricting to the first four eigenvalues
    gives a bijection
    \[
    \beta_\varepsilon:
    \{\lambda_k^D(\varepsilon):1\le k\le4\}
    \to
    \{\eta_k(\varepsilon):1\le k\le4\}
    \]
    between the two finite lists, with eigenvalues counted according to
    multiplicity, such that
    \[
    |\beta_\varepsilon(\lambda)-\lambda|
    \le
    C_3 e^{-\kappa_3/\varepsilon}
    \]
    for some \(C_3>0\) and \(0<\kappa_3<S\).
  It is easy to see that 
    \[
    |\lambda_k^D(\varepsilon)-\eta_k(\varepsilon)|
    \le
    C_3 e^{-\kappa_3/\varepsilon},
    \qquad k=1,2,3,4,
    \]
    and therefore
    \[
    \lambda_4^D(\varepsilon)-\lambda_3^D(\varepsilon)
    \ge
    \frac{\gamma_2}{2}\varepsilon .
    \]
    {\color{black}By Lemma~\ref{lem:ND-comparison}\textup{(\ref{item:nd-spectral-spaces})}, there exist \(C_4>0\) and \(\kappa_4>S\) such that} 
    \[
    \|\widetilde\Pi_{\varepsilon,0}^D-\Pi_\varepsilon^N\|
    \le
    C_4 e^{-\kappa_4/\varepsilon},
    \quad\text{and}\quad
    \operatorname{Ran}\widetilde\Pi_{\varepsilon,0}^D
    =
    J_\chi E_{\varepsilon,3}^{D}.
    \] Fix
    \(\kappa_0>S\) with
    \(\kappa_0<\min\{\kappa_1,\kappa_2,\kappa_4\}\), choose \(C_0>0\) large
    enough, and put
    $
    \omega_\varepsilon:=C_0e^{-\kappa_0/\varepsilon}.
    $
    Then, for \(u\in E_{\varepsilon,3}^{D}\), one has
    \[
    \|J_\chi u-u\|_{L^2(\Omega_0)}
    +
    \|(H_\varepsilon^NJ_\chi-J_\chi H_\varepsilon^{D})u\|_{L^2(\Omega)}
    \le
    \omega_\varepsilon\|u\|_{L^2(\Omega_0)}.
    \]
    The preceding projection estimate also gives
    \[
    \|\widetilde\Pi_{\varepsilon,0}^D-\Pi_\varepsilon^N\|
    \le\omega_\varepsilon .
    \]
    
    Put \(T_\varepsilon:=\Pi_\varepsilon^NJ_\chi:
    E_{\varepsilon,3}^{D}\to E_{\varepsilon,3}^{N}\).
    For \(u\in E_{\varepsilon,3}^{D}\), one has
    \[
    \widetilde\Pi_{\varepsilon,0}^D J_\chi u=J_\chi u,
    \qquad\text{and}\qquad
    (I-\Pi_\varepsilon^N)J_\chi u
    =
    (\widetilde\Pi_{\varepsilon,0}^D-\Pi_\varepsilon^N)J_\chi u,
    \]
    Consequently, one has
    \[
    \begin{aligned}
    \|(I-\Pi_\varepsilon^N)J_\chi u\|_{L^2(\Omega)}
    \le
    \omega_\varepsilon\|J_\chi u\|_{L^2(\Omega)}
    \le
    \omega_\varepsilon\|u\|_{L^2(\Omega_0)}.
    \end{aligned}
    \]
    Combining this estimate with \eqref{eq:transfer-residual}, for
    \(u,v\in E_{\varepsilon,3}^{D}\), one has
    \[
    \begin{aligned}
    \langle T_\varepsilon u,T_\varepsilon v\rangle_{L^2(\Omega)}
    &=
    \langle J_\chi u,J_\chi v\rangle_{L^2(\Omega)}
    +O(\omega_\varepsilon)
    \|u\|_{L^2(\Omega_0)}\|v\|_{L^2(\Omega_0)}\\
    &=
    \langle u,v\rangle_{L^2(\Omega_0)}
    +O(\omega_\varepsilon)
    \|u\|_{L^2(\Omega_0)}\|v\|_{L^2(\Omega_0)}.
    \end{aligned}
    \]
    {\color{black}The adjoint \(T_\varepsilon^*\), with respect to the two Hilbert-space
    inner products, satisfies}
    \[
    \langle T_\varepsilon^* f,u\rangle_{L^2(\Omega_0)}
    =
    \langle f,T_\varepsilon u\rangle_{L^2(\Omega)},
    \qquad
    f\in E_{\varepsilon,3}^{N},\quad u\in E_{\varepsilon,3}^{D} .
    \]
    Hence, for \(u,v\in E_{\varepsilon,3}^{D}\), one has
    \[
    \begin{aligned}
    \left\langle
    (T_\varepsilon^*T_\varepsilon-I_{E_{\varepsilon,3}^{D}})u,
    v
    \right\rangle_{L^2(\Omega_0)}
    &=
    \langle T_\varepsilon u,T_\varepsilon v\rangle_{L^2(\Omega)}
    -
    \langle u,v\rangle_{L^2(\Omega_0)}=
    O(\omega_\varepsilon)
    \|u\|_{L^2(\Omega_0)}\|v\|_{L^2(\Omega_0)}.
    \end{aligned}
    \]
    Therefore
    \begin{equation}
    T_\varepsilon^*T_\varepsilon
    =
    I_{E_{\varepsilon,3}^{D}}+O(\omega_\varepsilon)
    \quad\text{on }E_{\varepsilon,3}^{D}.
    \label{eq:transfer-isometry}
    \end{equation}
    Hence \(T_\varepsilon\) is invertible from \(E_{\varepsilon,3}^{D}\) onto
    \(E_{\varepsilon,3}^{N}\) for small \(\varepsilon\). Define
    $
    U_\varepsilon
    =
    T_\varepsilon(T_\varepsilon^*T_\varepsilon)^{-1/2}.
   $
    Then \(U_\varepsilon:E_{\varepsilon,3}^{D}\to E_{\varepsilon,3}^{N}\) is unitary
    and
    \begin{equation}
    U_\varepsilon=T_\varepsilon+O(\omega_\varepsilon).
    \label{eq:transfer-unitary}
    \end{equation}
    Let \(u,v\in E_{\varepsilon,3}^{D}\) and
    \(\|u\|_{L^2(\Omega_0)}=\|v\|_{L^2(\Omega_0)}=1\).
    Lemmas~\ref{lem:bound-first-four} and
    \ref{lem:ND-comparison}\textup{(\ref{item:nd-eigenvalues})} give constants
    \(C_{\rm W}>0\) and \(\varepsilon_{\rm W}>0\) such that
    \[
    \lambda_3^N(\varepsilon)+\lambda_3^D(\varepsilon)
    \le C_{\rm W}\varepsilon
    \]
    for \(0<\varepsilon<\varepsilon_{\rm W}\).
    Since \(U_\varepsilon u,T_\varepsilon u\in E_{\varepsilon,3}^{N}\),
    the spectral definitions and \eqref{eq:transfer-residual} give a constant
    \(C_5>0\) such that
    \[
    \|H_\varepsilon^{D}u\|_{L^2(\Omega_0)}
    +
    \|H_\varepsilon^NU_\varepsilon u\|_{L^2(\Omega)}
    +
    \|H_\varepsilon^NT_\varepsilon u\|_{L^2(\Omega)}
    +
    \|H_\varepsilon^NJ_\chi u\|_{L^2(\Omega)}
    \le
    C_5\varepsilon .
    \]
    On \(E_{\varepsilon,3}^{D}\), the identity
    \((I-\Pi_\varepsilon^N)J_\chi u=J_\chi u-T_\varepsilon u\)
    and the estimate
    \(\|(I-\Pi_\varepsilon^N)J_\chi\|
    \le\omega_\varepsilon\) imply
    \(\|T_\varepsilon-J_\chi\|\le\omega_\varepsilon\). {\color{black}Combining this with
    \eqref{eq:transfer-unitary}, we obtain \(C_6>0\) such that}
    \[
    \|U_\varepsilon-T_\varepsilon\|
    +
    \|T_\varepsilon-J_\chi\|
    \le
    C_6\omega_\varepsilon .
    \]
    Hence
    \[
    \begin{aligned}
    &\left|
    \left\langle
    H_\varepsilon^NU_\varepsilon u,U_\varepsilon v
    \right\rangle_{L^2(\Omega)}
    -
    \left\langle
    H_\varepsilon^NT_\varepsilon u,T_\varepsilon v
    \right\rangle_{L^2(\Omega)}
    \right|\\
    &\quad\le
    \|H_\varepsilon^NU_\varepsilon u\|_{L^2(\Omega)}
    \|(U_\varepsilon-T_\varepsilon)v\|_{L^2(\Omega)}
    +
    \|H_\varepsilon^NT_\varepsilon v\|_{L^2(\Omega)}
    \|(U_\varepsilon-T_\varepsilon)u\|_{L^2(\Omega)}
    \le
    C_7\varepsilon\omega_\varepsilon ,
    \end{aligned}
    \]
    and
    \[
    \begin{aligned}
    &\left|
    \left\langle
    H_\varepsilon^NT_\varepsilon u,T_\varepsilon v
    \right\rangle_{L^2(\Omega)}
    -
    \left\langle
    H_\varepsilon^NJ_\chi u,J_\chi v
    \right\rangle_{L^2(\Omega)}
    \right|\\
    &\quad\le
    \|H_\varepsilon^NT_\varepsilon u\|_{L^2(\Omega)}
    \|(T_\varepsilon-J_\chi)v\|_{L^2(\Omega)}
    +
    \|H_\varepsilon^NJ_\chi v\|_{L^2(\Omega)}
    \|(T_\varepsilon-J_\chi)u\|_{L^2(\Omega)}
    \le
    C_8\varepsilon\omega_\varepsilon .
    \end{aligned}
    \]
    By \eqref{eq:transfer-residual}, there exists \(C_9>0\) such that
    \[
    \begin{aligned}
    &\left|
    \left\langle
    H_\varepsilon^NJ_\chi u,J_\chi v
    \right\rangle_{L^2(\Omega)}
    -
    \left\langle
    J_\chi H_\varepsilon^{D}u,J_\chi v
    \right\rangle_{L^2(\Omega)}
    \right|\\
    &\quad\le
    \|(H_\varepsilon^NJ_\chi-J_\chi H_\varepsilon^{D})u\|_{L^2(\Omega)}
    \|J_\chi v\|_{L^2(\Omega)}
    \le
    C_9\omega_\varepsilon .
    \end{aligned}
    \]
    Since \(H_\varepsilon^{D}u\in E_{\varepsilon,3}^{D}\),
    \eqref{eq:transfer-residual} also applies to
    \(H_\varepsilon^{D}u\). Consequently, there exists \(C_{10}>0\) such that
    \[
    \begin{aligned}
    &\left|
    \left\langle
    J_\chi H_\varepsilon^{D}u,J_\chi v
    \right\rangle_{L^2(\Omega)}
    -
    \left\langle
    H_\varepsilon^{D}u,v
    \right\rangle_{L^2(\Omega_0)}
    \right|\le
    C_{10}\varepsilon\|J_\chi v-v\|_{L^2(\Omega_0)}
    \le
    C_{10}\varepsilon\omega_\varepsilon .
    \end{aligned}
    \]
    Therefore
    \[
    \begin{aligned}
    \left\langle
    H_\varepsilon^NU_\varepsilon u,
    U_\varepsilon v
    \right\rangle_{L^2(\Omega)}
    &=
    \left\langle
    H_\varepsilon^NT_\varepsilon u,
    T_\varepsilon v
    \right\rangle_{L^2(\Omega)}
    +
    O(\omega_\varepsilon)\\
    &=
    \left\langle
    H_\varepsilon^NJ_\chi u,
    J_\chi v
    \right\rangle_{L^2(\Omega)}
    +
    O(\omega_\varepsilon)\\
    &=
    \left\langle
    J_\chi H_\varepsilon^{D}u,
    J_\chi v
    \right\rangle_{L^2(\Omega)}
    +
    O(\omega_\varepsilon)\\
    &=
    \left\langle
    H_\varepsilon^{D}u,
    v
    \right\rangle_{L^2(\Omega_0)}
    +
    O(\omega_\varepsilon).
    \end{aligned}
    \]
    {\color{black}It follows from these four equalities that there exists \(C_{11}>0\) such that}
    \[
    \left|
    \left\langle
    H_\varepsilon^NU_\varepsilon u,
    U_\varepsilon v
    \right\rangle_{L^2(\Omega)}
    -
    \left\langle
    H_\varepsilon^{D}u,
    v
    \right\rangle_{L^2(\Omega_0)}
    \right|
    \le
    C_{11}\omega_\varepsilon.
    \]
    Let \(\{\xi_{1,\varepsilon}^{D},\xi_{2,\varepsilon}^{D},
    \xi_{3,\varepsilon}^{D}\}\) be the real orthonormal basis of
    \(E_{\varepsilon,3}^{D}\) used to represent \(\mathcal M_{\varepsilon,0}^{D}\).
    By its definition,
    \begin{equation}
    (\mathcal M_{\varepsilon,0}^{D})_{ij}
    =\left\langle H_\varepsilon^{D}\xi_{j,\varepsilon}^{D},
    \xi_{i,\varepsilon}^{D}\right\rangle_{L^2(\Omega_0)},
    \qquad 1\le i,j\le3.
    \label{eq:dirichlet-matrix-entries}
    \end{equation}
    Set \(\zeta_{j,\varepsilon}:=U_\varepsilon\xi_{j,\varepsilon}^{D}\),
    \(j=1,2,3\). Since \(U_\varepsilon\) is unitary,
    \(\{\zeta_{1,\varepsilon},\zeta_{2,\varepsilon},
    \zeta_{3,\varepsilon}\}\) is an orthonormal basis of \(E_{\varepsilon,3}^{N}\).
    Substituting \(u=\xi_{j,\varepsilon}^{D}\) and
    \(v=\xi_{i,\varepsilon}^{D}\) into the preceding estimate and using
    \eqref{eq:dirichlet-matrix-entries} gives
    \begin{equation}
    \left|
    \left\langle H_\varepsilon^N\zeta_{j,\varepsilon},
    \zeta_{i,\varepsilon}\right\rangle_{L^2(\Omega)}
    -(\mathcal M_{\varepsilon,0}^{D})_{ij}
    \right|
    \le C_{11}\omega_\varepsilon,
    \qquad 1\le i,j\le3.
    \label{eq:transfer-matrix-entries}
    \end{equation}
    Since \(\omega_\varepsilon=C_0e^{-\kappa_0/\varepsilon}\) and
    \(\kappa_0>S\), one has \(\omega_\varepsilon=o(\tau_\varepsilon)\).
    Thus \eqref{eq:transfer-matrix-entries} yields, in this basis,
    \begin{equation}
    \Pi_\varepsilon^N H_\varepsilon^N\Pi_\varepsilon^N
    =\mathcal M_{\varepsilon,0}^{D}+O(e^{-\kappa_0/\varepsilon})
    =\mathcal M_{\varepsilon,0}^{D}+o(\tau_\varepsilon),
    \label{eq:transfer-matrix-conclusion}
    \end{equation}
    which proves the lemma.
\end{proof}

Set
\[
G_0
=
\begin{pmatrix}
    -\sqrt2&1&0\\
    1&-\sqrt2&1\\
    0&1&-\sqrt2
\end{pmatrix}.
\]

\begin{lemma}
    \label{lem:three-well-reduction}
    Assume \eqref{ass:tunneling}. Then the following hold.
    \begin{enumerate}
        \item\label{item:three-well-matrix} With
        \(\{\zeta_{1,\varepsilon},\zeta_{2,\varepsilon},\zeta_{3,\varepsilon}\}\)
        as a basis of \(E_{\varepsilon,3}^{N}\), one has
        \[
        \Pi_\varepsilon^N A_\varepsilon \Pi_\varepsilon^N
        =\tau_\varepsilon G_0+o(\tau_\varepsilon).
        \]
        
        \item\label{item:three-well-gap} There exists \(\gamma_0>0\), independent of \(\varepsilon\), such
        that, for all sufficiently small \(\varepsilon>0\), one has
        \[
        \sigma(A_\varepsilon|_{(E_{\varepsilon,3}^{N})^\perp})
        \subset(-\infty,-\gamma_0\varepsilon].
        \]
    \end{enumerate}
\end{lemma}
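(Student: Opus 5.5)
The plan is to read off both assertions from the compression of \(H_\varepsilon^N\) computed in Lemma~\ref{lem:low-energy-reduction}. The main extra input is Helffer's interaction-matrix theorem for the Dirichlet problem on \(\Omega_0\), together with the local spectral data from the proof of Lemma~\ref{lem:tunneling-matrix}.

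\textbf{Step 1: the Dirichlet matrix.} Since \(A_\varepsilon=-(H_\varepsilon^N-\lambda_1^N(\varepsilon)I)\), Lemma~\ref{lem:low-energy-reduction} gives, in the basis \(\zeta_{j,\varepsilon}=U_\varepsilon\xi^D_{j,\varepsilon}\),
\[
\Pi_\varepsilon^NA_\varepsilon\Pi_\varepsilon^N=\lambda_1^N(\varepsilon)I-\mathcal M^D_{\varepsilon,0}+o(\tau_\varepsilon).
\]
Everything therefore reduces to computing \(\mathcal M^D_{\varepsilon,0}\) up to \(o(\tau_\varepsilon)\). I will fix the basis \(\{\xi^D_{j,\varepsilon}\}\) to be the symmetric (L\"owdin) orthonormalization of the projections \(\Pi^D_{\varepsilon,0}(\vartheta_j\xi_{j,\varepsilon})\), which is the basis in Helffer \cite[Theorem~4.3.4]{Helffer1988}.

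In the proof of Lemma~\ref{lem:tunneling-matrix} the following hypotheses of that theorem were already verified: the local gap \eqref{eq:local-dirichlet-gap}, the matching condition \eqref{eq:local-eigenvalue-matching}, and the decay estimates \cite[(4.3.18)]{Helffer1988}. The theorem then gives
\[
\mathcal M^D_{\varepsilon,0}=\operatorname{diag}\bigl(\mu_1^{(1)},\mu_1^{(2)},\mu_1^{(3)}\bigr)+W_\varepsilon+O\bigl(\varepsilon^{-N_0}e^{-2S/\varepsilon}\bigr)
\]
for some fixed \(N_0\). The error term is \(o(\tau_\varepsilon)\). By \eqref{eq:local-eigenvalue-assumption} the diagonal equals \(\mu_1^{(1)}I+o(\tau_\varepsilon)\), and by Lemma~\ref{lem:tunneling-matrix} \(W_\varepsilon=\tau_\varepsilon B+o(\tau_\varepsilon)\). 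Hence
\[
\mathcal M^D_{\varepsilon,0}=\mu_1^{(1)}(\varepsilon)I+\tau_\varepsilon B+o(\tau_\varepsilon).
\]

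\textbf{Step 2: locating \(\lambda_1^N\).} \(\mathcal M^D_{\varepsilon,0}\) is the compression of \(H^D_\varepsilon\) to its own spectral subspace, so its eigenvalues are exactly \(\lambda_1^D\le\lambda_2^D\le\lambda_3^D\). The eigenvalues of \(B\) are \(-\sqrt2,0,\sqrt2\). Weyl's inequality for Hermitian matrices therefore gives
\[
\lambda_1^D(\varepsilon)=\mu_1^{(1)}(\varepsilon)-\sqrt2\,\tau_\varepsilon+o(\tau_\varepsilon).
\]
Lemma~\ref{lem:ND-comparison}\textup{(\ref{item:nd-eigenvalues})} gives \(\lambda_1^N=\lambda_1^D+O(e^{-\kappa_{\rm V}/\varepsilon})\) with \(\kappa_{\rm V}>S\), so the same expansion holds for \(\lambda_1^N\). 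Substituting into Step 1,
\[
\Pi_\varepsilon^NA_\varepsilon\Pi_\varepsilon^N=\tau_\varepsilon\bigl(-\sqrt2\,I-B\bigr)+o(\tau_\varepsilon)=\tau_\varepsilon G_0+o(\tau_\varepsilon),
\]
which is \textup{(\ref{item:three-well-matrix})}.

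\textbf{Step 3: the gap (\ref{item:three-well-gap}).} The operator \(A_\varepsilon\) restricted to \((E^N_{\varepsilon,3})^\perp\) has eigenvalues \(-(\lambda_k^N-\lambda_1^N)\) for \(k\ge4\), so it suffices to bound \(\lambda_4^N-\lambda_1^N\ge\gamma_0\varepsilon\). The proof of Lemma~\ref{lem:low-energy-reduction} gives \(|\lambda_k^D-\eta_k|\le C_3e^{-\kappa_3/\varepsilon}\) for \(k\le4\). It also gives
\[
\eta_4-\eta_1\ge\min_j\mu_2^{(j)}-\max_j\mu_1^{(j)}\ge\tfrac{\gamma_*}{2}\varepsilon.
\]
Combining these with Lemma~\ref{lem:ND-comparison}\textup{(\ref{item:nd-eigenvalues})} for \(k=1\) and \(k=4\) yields \(\lambda_4^N-\lambda_1^N\ge\tfrac{\gamma_*}{4}\varepsilon\) for small \(\varepsilon\). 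So \(\gamma_0=\gamma_*/4\) works.

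\textbf{Main obstacle.} The delicate step is Step 1. One must make sure that the basis used to define \(\mathcal M^D_{\varepsilon,0}\) in Lemma~\ref{lem:low-energy-reduction} can be taken to be Helffer's orthonormalized basis. One must also check that Helffer's remainder is genuinely \(o(\tau_\varepsilon)\). The diagonal corrections and the quadratic terms in the interactions are of size roughly \(\varepsilon^{-N_0}e^{-2S/\varepsilon}\) or \(e^{-(S+\alpha)/\varepsilon}\), and these are negligible against \(\varepsilon^{1/2}e^{-S/\varepsilon}\). However, this needs the precise form of the theorem, including that the cutoffs \(\vartheta_j\) in \eqref{eq:interaction-cutoffs} match Helffer's. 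Here I would cite \cite[Theorem~4.3.4 and (4.3.11)--(4.3.12)]{Helffer1988} directly.
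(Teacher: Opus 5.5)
Your proposal is correct and follows the paper's proof essentially step for step. Helffer's interaction-matrix theorem, in the Gram-orthonormalized basis built from \(\vartheta_j\xi_{j,\varepsilon}\), gives \(\mathcal M^D_{\varepsilon,0}=\mu_1^{(1)}(\varepsilon)I+\tau_\varepsilon B+o(\tau_\varepsilon)\); Lemma~\ref{lem:low-energy-reduction} transfers this to \(E^N_{\varepsilon,3}\); and the gap comes from the direct-sum ordering of the local eigenvalues together with the exponentially small eigenvalue comparisons. The only cosmetic difference is that you locate \(\lambda_1^N\) through \(\lambda_1^D\) and Lemma~\ref{lem:ND-comparison}\textup{(\ref{item:nd-eigenvalues})}, whereas the paper reads it off directly from the Neumann compression, whose eigenvalues are \(\lambda_1^N,\lambda_2^N,\lambda_3^N\); both routes work because all the errors involved are \(O(e^{-\kappa/\varepsilon})\) with \(\kappa>S\), hence \(o(\tau_\varepsilon)\).
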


\begin{proof}
    \noindent\textup{(\ref{item:three-well-matrix})}
\begingroup
\color{black}
    Extend \(\vartheta_j\xi_{j,\varepsilon}\) by zero to \(\Omega_0\),
    using the cutoffs in \eqref{eq:interaction-cutoffs}.
    Projecting these functions onto \(E_{\varepsilon,3}^{D}\) and applying
    the Gram-matrix orthonormalization in
    Helffer \cite[(4.3.13) and Theorem~4.3.1]{Helffer1988}
    gives the basis
    \(\{\xi_{1,\varepsilon}^{D},\xi_{2,\varepsilon}^{D},
    \xi_{3,\varepsilon}^{D}\}\) used to represent
    \(\mathcal M_{\varepsilon,0}^{D}\).
\par
\endgroup
    If
    \(\widehat\xi_{j,\varepsilon}\) is the extension by zero to \(\Omega_0\) of
    the local positive first eigenfunction \(\xi_{j,\varepsilon}\) on
    \(\Omega_j\), then, after choosing signs, the same construction gives
    constants \(C_{\rm L}>0\) and \(\kappa_{\rm L}>0\) such that
    \begin{equation}
    \|\xi_{j,\varepsilon}^{D}-\widehat\xi_{j,\varepsilon}\|_{L^2(\Omega_0)}
    \le C_{\rm L} e^{-\kappa_{\rm L}/\varepsilon},
    \qquad j=1,2,3 .
    \label{eq:local-global-basis}
    \end{equation}
\begingroup
\color{black}
    Let \(\mathcal M_{\varepsilon,0}^{D}\) represent
    \(\Pi_{\varepsilon,0}^{D}H_\varepsilon^{D}\Pi_{\varepsilon,0}^{D}\)
    in this basis. For the cutoffs in \eqref{eq:interaction-cutoffs},
    Helffer \cite[Theorem~4.3.1]{Helffer1988} gives a remainder
    \(O(e^{-\kappa/\varepsilon})=o(\tau_\varepsilon)\) for some \(\kappa>S\).
    Thus, with \(W_\varepsilon\) defined in
    \eqref{eq:interaction-matrix-definition}, we have
    \begin{equation}
    \mathcal M_{\varepsilon,0}^{D}
    =\operatorname{diag}\bigl(
    \mu_1^{(1)}(\varepsilon),
    \mu_1^{(2)}(\varepsilon),
    \mu_1^{(3)}(\varepsilon)\bigr)
    +W_\varepsilon+o(\tau_\varepsilon).
    \label{eq:dirichlet-interaction-decomposition}
    \end{equation}
\par
\endgroup
    By
    \eqref{ass:tunneling}\textup{(\ref{item:tunneling-local-eigenvalues})}
    and Lemma~\ref{lem:tunneling-matrix}, we obtain
    \[
    \Pi_{\varepsilon,0}^{D}
    H_\varepsilon^{D}
    \Pi_{\varepsilon,0}^{D}
    =
    \mu_1^{(1)}(\varepsilon)I+\tau_\varepsilon B+o(\tau_\varepsilon),
    \]
    {\color{black}where \(B\) is the matrix defined before
    Lemma~\ref{lem:tunneling-matrix}. Lemma~\ref{lem:low-energy-reduction}
    gives the corresponding matrix on \(E_{\varepsilon,3}^{N}\). Thus, in the orthonormal basis
    \(\{\zeta_{1,\varepsilon},\zeta_{2,\varepsilon},
    \zeta_{3,\varepsilon}\}\) constructed there, we have}
    \[
    \Pi_\varepsilon^N H_\varepsilon^N\Pi_\varepsilon^N
    =
    \mu_1^{(1)}(\varepsilon)I
    +
    \tau_\varepsilon B
    +
    o(\tau_\varepsilon).
    \]
    {\color{black}The eigenvalues of \(B\) are \(-\sqrt2,0,\sqrt2\). Hence the smallest
    of these three eigenvalues of \(H_\varepsilon^N\) is}
    \[
    \lambda_1^N(\varepsilon)
    =
    \mu_1^{(1)}(\varepsilon)-\sqrt2\,\tau_\varepsilon
    +
    o(\tau_\varepsilon).
    \]
    {\color{black}The operator \(A_\varepsilon\) is \(H_\varepsilon^N\) shifted by this
    lowest eigenvalue and then multiplied by \(-1\). The definitions}
    of \(L_\varepsilon\), \(\nu_\varepsilon\), and \(A_\varepsilon\) give
    \[
    A_\varepsilon
    =
    L_\varepsilon-\nu_\varepsilon I
    =
    -(H_\varepsilon^N-\lambda_1^N(\varepsilon)I).
    \]
    Combining the expansion of
    \(\Pi_\varepsilon^NH_\varepsilon^N\Pi_\varepsilon^N\) with the formula
    for \(A_\varepsilon\) gives
    \[
    \Pi_\varepsilon^N A_\varepsilon \Pi_\varepsilon^N
    =
    -\tau_\varepsilon(B+\sqrt2I)+o(\tau_\varepsilon).
    \]
    Since
    \[
    -(B+\sqrt2I)
    =
    \begin{pmatrix}
        -\sqrt2&1&0\\
        1&-\sqrt2&1\\
        0&1&-\sqrt2
    \end{pmatrix}
    =
    G_0,
    \]
    \(\Pi_\varepsilon^N A_\varepsilon \Pi_\varepsilon^N
    =\tau_\varepsilon G_0+o(\tau_\varepsilon)\).
    This proves~\textup{(\ref{item:three-well-matrix})}.
    
    \medskip
    \noindent\textup{(\ref{item:three-well-gap})}
    The direct-sum ordering established in the proof of Lemma~\ref{lem:low-energy-reduction},
    Helffer~\cite[Theorem~4.2.1]{Helffer1988}, and
    Lemma~\ref{lem:ND-comparison}\textup{(\ref{item:nd-eigenvalues})} give
    \[
    \eta_4(\varepsilon)-\eta_3(\varepsilon)
    \ge
    \gamma_2\varepsilon,
    \qquad\text{and}\qquad
    |\lambda_k^N(\varepsilon)-\eta_k(\varepsilon)|
    \le
    \delta_\varepsilon,\quad k=1,2,3,4,
    \]
    where there exist constants \(C_{\rm P}>0\) and \(\kappa_{\rm P}>0\),
    independent of \(\varepsilon\), such that
    \(\delta_\varepsilon=C_{\rm P}e^{-\kappa_{\rm P}/\varepsilon}
    =o(\varepsilon)\). Hence
    \[
    \lambda_4^N(\varepsilon)-\lambda_3^N(\varepsilon)
    \ge
    \eta_4(\varepsilon)-\eta_3(\varepsilon)-2\delta_\varepsilon
    \ge
    \frac{\gamma_2}{2}\varepsilon
    \]
    for all sufficiently small \(\varepsilon\). Since
    \(\lambda_1^N(\varepsilon)\le\lambda_3^N(\varepsilon)\), there exists
    \(\gamma_0>0\) such that
    \[
    \lambda_4^N(\varepsilon)-\lambda_1^N(\varepsilon)
    \ge
    \gamma_0\varepsilon
    \]
    for all sufficiently small \(\varepsilon>0\). This is equivalent to
    \[
    \sigma\!\left(H_\varepsilon^N\big|_{(E_{\varepsilon,3}^{N})^\perp}\right)
    \subset
    [\lambda_1^N(\varepsilon)+\gamma_0\varepsilon,\infty).
    \]
   Thus,
    \[
    \sigma(A_\varepsilon|_{(E_{\varepsilon,3}^{N})^\perp})
    \subset
    (-\infty,-\gamma_0\varepsilon].
    \]
    This proves~\textup{(\ref{item:three-well-gap})}.
\end{proof}

\section{Instability for growth profiles with three maximum points}
\label{sec:instability-hopf}

{\color{black}In this section, we use the spectral reduction to construct an unstable positive
steady state and Hopf branches of positive periodic solutions. We first give a
matrix result in dimensions one and two, and then present the
three-dimensional construction.}

\begin{proposition}
    \label{prop:one-two-mode-reduction}
    Let \(m\in\{1,2\}\), \(G_m=G_m^T\le0\), and
    \(\ker G_m=\operatorname{span}\{p\}\) for some
    \(p\in(0,\infty)^m\). Then, for every
    \(v,w\in(0,\infty)^m\), one has
    \[
    \sigma(G_m-vw^T)\subset\{z\in\mathbb C:\operatorname{Re}z<0\}.
    \]
    The same conclusion holds for
    \(G_m-vw^T+R_\varepsilon\) whenever
    \(\|R_\varepsilon\|\to0\) as \(\varepsilon\to0^+\), for all sufficiently
    small \(\varepsilon>0\).
\end{proposition}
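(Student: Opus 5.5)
For \(m=1\) the claim is immediate. Here \(G_1\le0\) with nontrivial kernel forces \(G_1=0\), so \(G_1-vw^T=-vw\) is a negative scalar. The perturbed statement then follows by continuity.

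For \(m=2\) I will use the trace/determinant criterion: a real \(2\times2\) matrix has both eigenvalues in the open left half-plane if and only if its trace is negative and its determinant is positive. Write \(G:=G_2\). It is symmetric, negative semidefinite, and has one-dimensional kernel spanned by \(p>0\). Hence \(G=-\mu qq^T\), where \(\mu>0\) and \(q\perp p\) is a unit vector.

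The trace is immediate: \(\operatorname{tr}(G-vw^T)=-\mu-\langle v,w\rangle<0\), since \(v,w>0\).

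For the determinant, I use the rank-one update formula \(\det(X-vw^T)=\det X-w^T\operatorname{adj}(X)v\). Since \(G\) has rank one, \(\det G=0\). Its adjugate is \(\operatorname{adj}(G)=-\mu\,\operatorname{adj}(qq^T)\), and for a rank-one \(2\times2\) matrix \(qq^T\) one has \(\operatorname{adj}(qq^T)=q^\perp(q^\perp)^T\), where \(q^\perp\) is a unit vector orthogonal to \(q\). In two dimensions \(q^\perp=\pm p/|p|\). Therefore
\[
\det(G-vw^T)=\mu\,\frac{\langle w,p\rangle\langle v,p\rangle}{|p|^2}>0,
\]
because \(p,v,w\) are all positive vectors. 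Both conditions hold, so the unperturbed spectrum lies in \(\{\operatorname{Re}z<0\}\).

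For the perturbation \(G_m-vw^T+R_\varepsilon\), the matrices \(G_m,v,w\) are fixed and \(R_\varepsilon\to0\). Eigenvalues of finite matrices depend continuously on the entries, so the eigenvalues of the perturbed matrix stay within any prescribed distance of those of \(G_m-vw^T\). That spectrum is at positive distance from the imaginary axis, so the conclusion persists for small \(\varepsilon\). One caveat: if \(v,w\) were allowed to depend on \(\varepsilon\), I would need them to converge to positive limits, but the statement fixes them.

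The only step needing care is the adjugate/kernel identification in the determinant computation. Its key point is that in dimension two the positivity of the kernel vector \(p\) transfers directly to the adjugate direction, which makes the sign of the determinant definite. In dimension three this link between \(p\) and the adjugate direction fails, which is why instability becomes possible there.
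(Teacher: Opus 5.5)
Your proof is correct, but it follows a different route from the paper's.

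\textbf{How the paper argues.} For $m=2$ the paper writes $-G_2=\gamma ee^T$ with $e\perp p$. Since $p>0$, the components of $e$ have opposite signs, so $-G_2$ is a singular $M$-matrix. Its zero eigenvalue is algebraically simple, with left and right null vector $p$. Because $p^Tv>0$ and $w^Tp>0$, the paper then cites Bierkens--Ran \cite[Theorem~2.7(i)(a)]{BierkensRan2014} on nonnegative rank-one perturbations of singular $M$-matrices. This places $\sigma(-G_2+vw^T)$ in the open right half-plane.

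\textbf{How you argue.} You use the $2\times2$ trace/determinant (Routh--Hurwitz) test together with the matrix determinant lemma. This gives the explicit value $\det(G_2-vw^T)=\mu\,(w^Tp)(p^Tv)/\|p\|_2^2$. Your route is self-contained and shows exactly what is needed: $w^Tv>-\mu$ and $(w^Tp)(p^Tv)>0$. So it even covers some sign-indefinite $v,w$. The paper's route is shorter on the page and ties the statement to the $M$-matrix literature, which also records that such stability can fail in higher dimensions. The $m=1$ case and the perturbation step by continuity of the spectrum are the same in both.

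\textbf{Your closing heuristic about dimension three is wrong.} In three dimensions the link between $p$ and the adjugate does not fail. For a symmetric negative semidefinite $3\times3$ matrix with eigenvalues $0,\lambda_2,\lambda_3$ and kernel spanned by $p$, one still has $\operatorname{adj}(G_0)=\lambda_2\lambda_3\,pp^T/\|p\|_2^2$ with $\lambda_2\lambda_3>0$. Hence $\det(G_0-vw^T)=-\lambda_2\lambda_3(w^Tp)(p^Tv)/\|p\|_2^2<0$, which is the sign a stable real $3\times3$ matrix must have. This is exactly $a_3(\eta)=4(w^Tp)(p^TD_\eta p)>0$ in Lemma~\ref{lem:matrix-choice}.

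What actually changes in dimension three is that positive characteristic coefficients no longer imply stability. There is an extra Hurwitz condition $a_1a_2>a_3$, and it can fail. The Hopf values in the paper are precisely roots of $\Delta=a_1a_2-a_3$.
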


\begin{proof}
    If \(m=1\), then \(G_1=0\), and the only eigenvalue of
    \(G_1-vw^T\) is \(-vw<0\).

    Let \(m=2\), set \(A=-G_2\), and put
    \(\widehat p=p/\|p\|_2\). Choose a unit vector
    \(e\perp\widehat p\). Since \(G_2\) is symmetric, negative semidefinite,
    and \(\ker G_2=\operatorname{span}\{p\}\), there is a number
    \(\gamma>0\) such that
    \[
    G_2=-\gamma ee^T.
    \]
    The two components of \(e\) have opposite signs because
    \(e\perp p\) and \(p>0\). Hence \(A=\gamma ee^T\) has nonpositive
    off-diagonal entries and eigenvalues \(0\) and \(\gamma\), so it is a
    singular \(M\)-matrix. Its zero eigenvalue is algebraically simple, and its
    left and right nullvectors are both \(p\). Since \(v,w>0\), one has
    \[
    p^Tv>0,
    \qquad\text{and}\qquad
    w^Tp>0.
    \]
    {\color{black}Thus the two nonzero-projection conditions in
    \cite[Theorem~2.7]{BierkensRan2014} hold; moreover, \(v,w\ge0\). Hence
    \cite[Theorem~2.7(i)(a)]{BierkensRan2014}
    applies to \(A+vw^T=-G_2+vw^T\), and we have}
    \[
    \sigma(-G_2+vw^T)
    \subset\{z\in\mathbb C:\operatorname{Re}z>0\},
    \]
    {\color{black}which proves the required inclusion for \(G_2-vw^T\).}
    The spectrum of a finite-dimensional matrix depends continuously on its
    entries. Therefore the same strict inclusion holds for
    \(G_m-vw^T+R_\varepsilon\) when \(\|R_\varepsilon\|\) is sufficiently
    small.
\end{proof}

{\color{black}We now prove Theorem~\ref{thm:three-peak-hopf}. We use the reduction in Section~\ref{sec:semiclassical-reduction}
to construct unstable steady states.
The first three eigenfunctions of \(H_\varepsilon^N\) span
\(E_{\varepsilon,3}^{N}\). After adding the nonlocal term,
the compression to
\(E_{\varepsilon,3}^{N}\) is represented by a non-self-adjoint \(3\times3\) matrix. We
first choose a finite-dimensional matrix with a Hopf crossing, then construct
positive coefficients in the equation that give this matrix, and finally obtain
the corresponding eigenvalues of the full linearized operator.}
\begingroup
\color{black}
\par
Set \(x_j=(j+2)/8\), \(j=1,2,3\), and take
\(\ell\in(1/16,1/8)\) sufficiently close to \(1/8\).
Choose \(\theta\in C_c^\infty((1/4,3/4))\) with
\(0\le\theta\le1\), \(\theta(1-x)=\theta(x)\), and
\(\theta=1\) on \([x_1-\ell,x_3+\ell]\). Define
\begin{equation}
V(x)=\theta(x)\sin^2(8\pi x)+1-\theta(x),
\qquad
r=\frac{1-\cos(8\pi\ell)}{8\pi}.
\label{eq:counterexample-potential}
\end{equation}
Then \(V\ge0\), \(V^{-1}(0)=\{x_1,x_2,x_3\}\), and
\(V''(x_j)>0\). Direct calculation gives
\begin{equation}
S_{12}=S_{23}=S=\frac1{4\pi},\qquad
S_{13}=2S,\qquad S_\partial\ge\frac14>S.
\label{eq:counterexample-actions}
\end{equation}

Moreover, \(S/2<r<S\), with \(r\) arbitrarily close to \(S\), and
\begin{equation}
\Omega_j=(x_j-\ell,x_j+\ell),\qquad
V(x_j+y)=\sin^2(8\pi y)\quad (|y|\le\ell).
\label{eq:counterexample-translations}
\end{equation}
Since the local Dirichlet problems are translates of one another,
\begin{equation}
\mu_1^{(j)}(\varepsilon)=\mu_1^{(1)}(\varepsilon),
\qquad j=1,2,3.
\label{eq:counterexample-local-eigenvalues}
\end{equation}

Choose the cutoffs \(\vartheta_j\) in \eqref{eq:interaction-cutoffs}
as translates of one another. The normalized positive local eigenfunctions
are also translates. By the change of variables \(x\mapsto x+1/8\) in
\eqref{eq:interaction-matrix-definition}, \(w_{12}=w_{23}\), and hence
\eqref{eq:interaction-entry-asymptotics} gives
\(b_{12}^{\,0}=b_{23}^{\,0}>0\). Therefore \eqref{ass:tunneling} holds.
\par
\endgroup

{\color{black} The following
matrix lemma gives a Hopf crossing starting from a stable matrix at
\(\eta=0\), corresponding to the later choice
\(b_{\varepsilon,0}=b_0\). We will use this
finite-dimensional crossing when applying the standard bifurcation theorems
\cite{CrandallRabinowitz1977}.}
The vector \(p=\frac12(1,\sqrt2,1)^T\) is the unit positive vector spanning
\(\ker G_0\). {\color{black}In the reduction on \(E_{\varepsilon,3}^{N}\), it is the limiting coordinate vector of the
\(L^2(\Omega)\)-normalized first eigenfunction \(\phi_\varepsilon\) of
\(H_\varepsilon^N\) introduced in Section~\ref{sec:semiclassical-reduction}.}
Fix \(b_0>0\). For a vector path \(v(\eta)\in\mathbb R^3\) and a vector
\(w\in\mathbb R^3\), write
\begin{equation}
M(\eta):=G_0-v(\eta)w^T .
\label{eq:matrix-path}
\end{equation}
If \(v(\eta)>0\) and \(w>0\) componentwise, then every entry of
\(v(\eta)w^T\) is positive.

\Needspace{12\baselineskip}
\begin{lemma}
    \label{lem:matrix-choice}
    For every fixed \(b_0>0\), one can choose a positive diagonal matrix
    \(D\) and vectors \(w^{\rm s},w^{\rm u}>0\), with
    \(w_1^{\rm s}=w_1^{\rm u}\) and
    \((w^{\rm s})^Tp=(w^{\rm u})^Tp\), such that the equilibrium \(z=0\) of
    \begin{equation}
    \frac{\mathrm dz}{\mathrm ds}
    =(G_0-D_\eta p w^T)z-(w^Tz)D_\eta z,
    \qquad D_\eta=(1-\eta)b_0I+\eta D,
    \label{eq:matrix-nonlinear-family}
    \end{equation}
    is linearly stable at \(\eta=0\) and loses linear stability through
    a nondegenerate Hopf bifurcation at some \(\eta_0\in(0,1)\).
    The bifurcation is supercritical for \(w=w^{\rm s}\) and subcritical
    for \(w=w^{\rm u}\), with the same \(D\).
\end{lemma}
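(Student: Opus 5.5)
The proof will be by explicit construction followed by a finite-dimensional Hopf computation. The linearization of \eqref{eq:matrix-nonlinear-family} at \(z=0\) is \(M(\eta)=G_0-D_\eta p w^T\), i.e.\ \eqref{eq:matrix-path} with \(v(\eta)=D_\eta p>0\).

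At \(\eta=0\) we have \(M(0)=G_0-b_0pw^T\). Since \(G_0\) is symmetric and \(G_0p=0\), the line \(\operatorname{span}\{p\}\) and \(p^\perp\) are both \(G_0\)-invariant. The matrix \(M(0)\) leaves \(\operatorname{span}\{p\}\) invariant, because \(M(0)p=-b_0(w^Tp)p\). Hence its spectrum consists of \(-b_0w^Tp<0\) together with the spectrum of the compression of \(M(0)\) to the quotient by \(\operatorname{span}\{p\}\). That quotient map is similar to \(G_0|_{p^\perp}\), whose eigenvalues are \(-\sqrt2\pm1\) if we drop the duplicate, and in any case they are negative. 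So \(M(0)\) is stable for every \(w>0\) and every \(b_0>0\), which gives the claim at \(\eta=0\).

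To produce a crossing, I will take \(D=\operatorname{diag}(\delta_1,\delta_2,\delta_3)\) strongly heterogeneous, so that \(v(1)=Dp\) is far from parallel to \(p\). Write the characteristic polynomial of \(M(\eta)\) as \(z^3+a_1z^2+a_2z+a_3\). Since \(p^TM(\eta)\) involves \(p^TD_\eta p\) and \(G_0\) is negative semidefinite, one checks \(\det M(\eta)=-(w^Tp)\,\det{}'\) of a positive quantity, so \(a_3>0\) throughout. The loss of stability must therefore come from the Routh–Hurwitz quantity \(h(\eta)=a_1a_2-a_3\) changing sign, which produces a pair of purely imaginary eigenvalues.

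The plan is first to search for a concrete pair \((D,w)\) with \(h(1)<0\). I will try a \(D\) that is large at the outer wells and small at the middle one, and a \(w\) concentrated on a single well; for instance \(\delta_1=\delta_3=K\), \(\delta_2\) small, and \(w\) weighted toward \(x_1\) or \(x_3\). I expect that for large \(K\) the term \(Dpw^T\) dominates and its nonnormality yields complex eigenvalues with positive real part. I will then verify the sign of \(h(1)\) numerically or symbolically in the limit \(K\to\infty\). Next I set \(\eta_0\) to be the first zero of \(h\) on \((0,1)\). Transversality, \(\operatorname{Re}\,\dot z(\eta_0)\ne0\), is equivalent to \(h'(\eta_0)\ne0\), and a generic choice of \(D\) in an open set secures it.

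The main obstacle is the criticality. The first Lyapunov coefficient \(\ell_1\) is computed from the quadratic nonlinearity \(Q(z,z)=-(w^Tz)D_{\eta_0}z\) by the standard formula
\[
\ell_1=\tfrac{1}{2\omega}\operatorname{Re}\bigl(\langle q^*,C(q,q,\bar q)\rangle-2\langle q^*,B(q,A^{-1}B(q,\bar q))\rangle+\langle q^*,B(\bar q,(2i\omega-A)^{-1}B(q,q))\rangle\bigr).
\]
Here \(C=0\), so only the quadratic terms contribute. The constraints are that \(D\) stays fixed, \(w_1\) stays fixed, and \(w^Tp\) stays fixed, which leaves a one-parameter affine family of \(w\). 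Along this family \(\eta_0\) and the critical eigenvectors move, and \(\ell_1\) is a continuous function of the free parameter.

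I will exhibit two members of the family, \(w^{\rm s}\) and \(w^{\rm u}\), both positive and both having a nondegenerate Hopf point in \((0,1)\), with \(\ell_1<0\) and \(\ell_1>0\) respectively. This requires the family to cross \(\ell_1=0\) while staying in the Hopf regime. I expect to locate explicit rational choices by evaluating \(\ell_1\) at two endpoints of the admissible segment. Because \(\ell_1\) is a rational expression in the data, exact evaluation at these points certifies the signs.
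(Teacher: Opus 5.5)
Your framework matches the paper's: the block-triangular argument at $\eta=0$, the Routh--Hurwitz quantity $h=a_1a_2-a_3$ with $a_3=4(w^Tp)(p^TD_\eta p)>0$, transversality from $h'(\eta_0)\ne0$, and a Lyapunov coefficient that involves only the quadratic terms. However, the lemma is an existence-by-example statement and you never produce the example. Worse, the example you suggest provably fails.

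Expand in the eigenbasis $p$, $e_1=(1,0,-1)^T/\sqrt2$, $e_2=\tfrac12(1,-\sqrt2,1)^T$ of $G_0$, whose eigenvalues are $0,-\sqrt2,-2\sqrt2$ (not $-\sqrt2\pm1$). Put $v=D_\eta p$ and $\alpha_0=(w^Tp)(p^Tv)$, $\alpha_1=(w^Te_1)(e_1^Tv)$, $\alpha_2=(w^Te_2)(e_2^Tv)$. The matrix determinant lemma then gives
\[
h=12\sqrt2+18\alpha_0+16\alpha_1+10\alpha_2+\sqrt2\,(\alpha_0+\alpha_1+\alpha_2)(3\alpha_0+2\alpha_1+\alpha_2).
\]
If $\delta_1=\delta_3$, then $v_1=v_3$ for every $\eta$, so $\alpha_1=0$. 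For positive vectors, $|w^Te_2|\le w^Tp$ and $|e_2^Tv|\le p^Tv$, so $|\alpha_2|\le\alpha_0$. Every term is then nonnegative and $h\ge12\sqrt2$. Hence $M(\eta)$ is stable for all $\eta\in[0,1]$, all $w>0$, and all $\delta_2,K>0$. A crossing requires $\alpha_1$ to be strongly negative, i.e.\ $D_\eta p$ concentrated at one outer well and $w$ at the other. This is exactly what the paper's data do, with $D=\operatorname{diag}(51/5000,297/3125,4443/100)$ and $w_1=196$.

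Two further ingredients are missing. First, the lemma is asserted for every $b_0>0$, while any numerical or symbolic check covers only finitely many values. You need the observation that $(D,w)\mapsto(b_0D,b_0^{-1}w)$ leaves $D_\eta pw^T$ and $(w^Tz)D_\eta z$ unchanged and preserves both constraints, which reduces everything to $b_0=1$.

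Second, the criticality step is only a hope. Nothing you say guarantees that $\ell_1$ changes sign along the admissible segment while the Hopf point stays in $(0,1)$ with $h'(\eta_0)\ne0$. Moreover, \emph{rational} choices are impossible: two rational vectors with the same $w_1$ and the same $w^Tp=\tfrac12(w_1+\sqrt2w_2+w_3)$ must coincide.

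The paper instead exhibits $w^{\rm s}=(196,11/4,4-11\sqrt2/4)^T$ and $w^{\rm u}=(196,2,4-2\sqrt2)^T$, both with $w^Tp=100$. It notes that $a_1,a_2,a_3$ are affine in $\eta$, so $\Delta=a_1a_2-a_3$ is quadratic. It then locates the smaller root with $\Delta'(\eta_0)<0$ and evaluates Kuznetsov's formula to get $\ell<0$ and $\ell>0$ respectively. Without such concrete data, your argument establishes only the stability at $\eta=0$.
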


\begin{proof}
    We first take \(b_0=1\). Choose
    \begin{equation}
    \begin{gathered}
    D=\operatorname{diag}\!\left(\frac{51}{5000},\frac{297}{3125},
    \frac{4443}{100}\right),\qquad
    w^{\rm s}=\left(196,\frac{11}{4},4-\frac{11\sqrt2}{4}\right)^T,
    \qquad
    w^{\rm u}=\left(196,2,4-2\sqrt2\right)^T.
    \end{gathered}
    \label{eq:hopf-common-data}
    \end{equation}
    Both vectors are positive, and
    \begin{equation}
    w_1^{\rm s}=w_1^{\rm u}=196,\qquad
    (w^{\rm s})^Tp=(w^{\rm u})^Tp=100.
    \label{eq:hopf-common-mass}
    \end{equation}

    For either choice \(w\), the stability at \(\eta=0\) follows from
    the structure of \(G_0\). The vector \(p\) spans \(\ker G_0\), and
    \(p^\perp\) is invariant under \(G_0\). For \(y\in p^\perp\),
    \begin{equation*}
    (G_0-pw^T)y=G_0y-p(w^Ty),\qquad
    (G_0-pw^T)p=-(w^Tp)p.
    \end{equation*}
    Thus the matrix is block triangular in a basis whose first vector
    is \(p\) and whose remaining vectors span \(p^\perp\). Its eigenvalues
    are \(-100,-\sqrt2,-2\sqrt2\), so \(M(0)\) is stable.

    Write
    \begin{equation}
    M(\eta)=G_0-D_\eta p w^T,\qquad D_\eta=(1-\eta)I+\eta D.
    \label{eq:hopf-endpoint-path}
    \end{equation}
    Direct computation gives
    \begin{equation*}
    \begin{aligned}
    \det(\lambda I-M(\eta))
    =\lambda^3+a_1(\eta)\lambda^2+a_2(\eta)\lambda+a_3(\eta),
    \end{aligned}
    \label{eq:matrix-hurwitz-determinant}
    \end{equation*}
    where $a_1(\eta)=3\sqrt2+w^TD_\eta p,
    a_2(\eta)=4+w^T(G_0+3\sqrt2 I)D_\eta p,
    a_3(\eta)=4(w^Tp)(p^TD_\eta p).$
    These coefficients are positive on \([0,1]\). Since they are affine
    in \(\eta\), the equation
    \begin{equation}
    \Delta(\eta):=a_1(\eta)a_2(\eta)-a_3(\eta)=0
    \label{eq:hopf-endpoint-quadratic}
    \end{equation}
    is quadratic. Let \(\eta_{\rm s}\) and \(\eta_{\rm u}\) be its smaller
    positive roots for the two vectors in \eqref{eq:hopf-common-data}.
    Substituting these data into  \eqref{eq:hopf-endpoint-quadratic} gives
    \begin{equation}
    \begin{aligned}
    0.8217<\eta_{\rm s}<0.8218,&\qquad
    \Delta_{\rm s}'(\eta_{\rm s})<-26000,\\
    0.9951<\eta_{\rm u}<0.9952,&\qquad
    \Delta_{\rm u}'(\eta_{\rm u})<-23000.
    \end{aligned}
    \label{eq:hopf-critical-values}
    \end{equation}

    For either choice, write \(\eta_0=\eta_{\rm s}\) or \(\eta_{\rm u}\),
    respectively. At \(\eta=\eta_0\), the characteristic polynomial factors as
    \begin{equation*}
    \det(\lambda I-M(\eta_0))
    =(\lambda+a_1(\eta_0))(\lambda^2+a_2(\eta_0)).
    \end{equation*}
    Hence the eigenvalues are \(-a_1(\eta_0)<0\) and the simple pair
    \(\pm i\omega_0\), where \(\omega_0=\sqrt{a_2(\eta_0)}>0\).
    Implicit differentiation gives
    \begin{equation}
    \left.\frac{\mathrm d}{\mathrm d\eta}\operatorname{Re}\lambda_+(\eta)
    \right|_{\eta=\eta_0}
    =-\frac{\Delta'(\eta_0)}{2\bigl(a_1(\eta_0)^2+a_2(\eta_0)\bigr)}>0,
    \label{eq:hopf-endpoint-crossing-speed}
    \end{equation}
    by \eqref{eq:hopf-critical-values}.

    At this critical value, define
    \begin{equation}
    \begin{aligned}
    A(D_{\eta_0},w)&=G_0-D_{\eta_0}p\,w^T,\qquad
    \mathcal B_0(x,y)=-D_{\eta_0}\bigl[x(w^Ty)+y(w^Tx)\bigr].
    \end{aligned}
    \label{eq:hopf-endpoint-operators}
    \end{equation}
    Use \(\langle x,y\rangle=\overline x^Ty\) and choose
    \begin{equation*}
    \begin{gathered}
    A(D_{\eta_0},w)q=i\omega_0q,\qquad
    A(D_{\eta_0},w)^*p_H=-i\omega_0p_H,\qquad
    \|q\|_2=1,\qquad \langle p_H,q\rangle=1.
    \end{gathered}
    \label{eq:hopf-endpoint-normalization}
    \end{equation*}
    The matrices \(A(D_{\eta_0},w)\) and
    \(2i\omega_0I-A(D_{\eta_0},w)\) are invertible. The first Lyapunov coefficient
    is given by \cite[Section~5.4.2, formula~(5.62)]{Kuznetsov1998}:
    \begin{equation}
    \begin{aligned}
    \ell(D_{\eta_0},w)=\frac{1}{2\omega_0}\operatorname{Re}\bigl\langle p_H,
    &-2\mathcal B_0\bigl(q,A(D_{\eta_0},w)^{-1}\mathcal B_0(q,\overline q)\bigr)\\
    &+\mathcal B_0\bigl(\overline q,(2i\omega_0I-A(D_{\eta_0},w))^{-1}
    \mathcal B_0(q,q)\bigr)\bigr\rangle.
    \end{aligned}
    \label{eq:hopf-endpoint-lyapunov}
    \end{equation}
    Substitution of \eqref{eq:hopf-common-data} into
    \eqref{eq:hopf-endpoint-lyapunov} yields
    \begin{equation}
    \begin{aligned}
    -0.311&<\ell(D_{\eta_{\rm s}},w^{\rm s})<-0.310,\\
    0.064&<\ell(D_{\eta_{\rm u}},w^{\rm u})<0.065.
    \end{aligned}
    \label{eq:hopf-common-lyapunov-signs}
    \end{equation}
    The Hopf theorem, \eqref{eq:hopf-endpoint-crossing-speed}, and
    \eqref{eq:hopf-common-lyapunov-signs} give the two asserted bifurcations.

    For general \(b_0>0\), replace the common matrix \(D\) by \(b_0D\)
    and each vector \(w\) by \(b_0^{-1}w\). The products
    \(D_\eta p w^T\) and \((w^Tz)D_\eta z\) in
    \eqref{eq:matrix-nonlinear-family} remain unchanged. Both equalities
    between the two vectors in the statement are also preserved.
\end{proof}

{\color{black}To construct the coefficients, we fix the symmetric
effective profile \(q=q_*-V\) with three maximum points, choose positive
\(b_{\varepsilon,\eta}\) and \(c_\varepsilon\), set
\(u_\varepsilon^*=\phi_\varepsilon\), and then define}
\[
a_{\varepsilon,\eta}-S_\varepsilon^*b_{\varepsilon,\eta}
=q-\nu_\varepsilon,
\qquad\text{and}\qquad
S_\varepsilon^*=\int_\Omega c_\varepsilon(y)\phi_\varepsilon(y)\,\mathrm{d}y .
\]
{\color{black}The five profiles are shown in
Figure~\ref{fig:three-peak-coeff-profiles}.}

For the basis in Lemma~\ref{lem:three-well-reduction}, write
\(J_\varepsilon z=\sum_{j=1}^3z_j\zeta_{j,\varepsilon}\).

\begin{lemma}
\label{lem:positive-profiles}
Fix \(b_0>0\) and \(v^H,w\in(0,\infty)^3\), and set
\(M(\eta)=G_0-[(1-\eta)b_0p+\eta v^H]w^T\).
There exist \(\delta>0\) and smooth positive coefficients
\(b_{\varepsilon,\eta}=b_\eta\) and \(c_\varepsilon\), defined for
\(\eta\in I_\delta:=[-\delta,1+\delta]\), such that, for all sufficiently
small \(\varepsilon>0\),
\begin{equation}
J_\varepsilon^*\mathcal L_{\varepsilon,\eta}J_\varepsilon
=\tau_\varepsilon M(\eta)+o(\tau_\varepsilon)
\quad\text{in }C^1(I_\delta;\mathbb R^{3\times3}),
\label{eq:positive-profile-matrix-realization}
\end{equation}
where \(\mathcal L_{\varepsilon,\eta}
=A_\varepsilon-(b_\eta\phi_\varepsilon)\otimes c_\varepsilon\).
Here \(b_\eta\) is independent of \(\varepsilon\), affine in \(\eta\),
and satisfies \(b_{\varepsilon,0}=b_0\), while \(c_\varepsilon\)
is independent of \(\eta\) and satisfies
\(\langle c_\varepsilon,\phi_\varepsilon\rangle=\tau_\varepsilon w^Tp\).
\end{lemma}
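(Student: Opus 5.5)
Write $\zeta_j:=\zeta_{j,\varepsilon}$. The plan is to compute the compression entrywise. By Lemma~\ref{lem:three-well-reduction}\textup{(\ref{item:three-well-matrix})}, $J_\varepsilon^*A_\varepsilon J_\varepsilon=\tau_\varepsilon G_0+o(\tau_\varepsilon)$. This term does not depend on $\eta$, so its $C^1(I_\delta)$ statement is automatic. The nonlocal part contributes the rank-one matrix
\[
J_\varepsilon^*\bigl((b_\eta\phi_\varepsilon)\otimes c_\varepsilon\bigr)J_\varepsilon=\beta_\varepsilon(\eta)\,\gamma_\varepsilon^T,\qquad
(\beta_\varepsilon(\eta))_i=\langle b_\eta\phi_\varepsilon,\zeta_i\rangle,\quad (\gamma_\varepsilon)_j=\langle c_\varepsilon,\zeta_j\rangle .
\]
So it suffices to choose $b_\eta$ and $c_\varepsilon$ so that
\[
\beta_\varepsilon(\eta)=(1-\eta)b_0p+\eta v^H+o(1)\quad\text{in }C^1(I_\delta),\qquad \gamma_\varepsilon=\tau_\varepsilon w+o(\tau_\varepsilon).
\]
Since $b_\eta$ will be affine in $\eta$, $\beta_\varepsilon$ is affine in $\eta$, and convergence of its two coefficients gives the $C^1$ convergence.

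The first ingredient is localization. By \eqref{eq:local-global-basis} and the unitary transfer in Lemma~\ref{lem:low-energy-reduction}, each $\zeta_j$ is exponentially close in $L^2(\Omega)$ to the local ground state $\widehat\xi_{j,\varepsilon}$ (after fixing signs), and $\widehat\xi_{j,\varepsilon}$ concentrates at $x_j$ on scale $\varepsilon^{1/2}$. Moreover $\phi_\varepsilon=\sum_j p_j^\varepsilon\zeta_j$ with $p^\varepsilon\to p$, since $\phi_\varepsilon$ spans the kernel of the limiting compression $\tau_\varepsilon G_0$, whose eigenvalue $0$ is simple with gap $\sim\tau_\varepsilon$. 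I take $b_\eta:=(1-\eta)b_0+\eta\beta$, where $\beta>0$ is smooth and equals $v^H_j/p_j$ near $x_j$. Then $\langle\beta\phi_\varepsilon,\zeta_i\rangle=\beta(x_i)p_i+o(1)=v^H_i+o(1)$, using $\beta\phi_\varepsilon\approx\sum_j\beta(x_j)p_j\zeta_j$ up to the localization errors. Taking $\delta$ small keeps $b_\eta>0$ on $I_\delta$.

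For $c_\varepsilon$ the required size is only $\tau_\varepsilon$, exponentially small. A positive function cannot be concentrated at the wells with such small mass, since $\zeta_j\sim\varepsilon^{-1/4}$ there. The plan is to exploit the fact that the $\zeta_j$ are nearly orthonormal and nearly supported near the $x_j$, and to write
\[
c_\varepsilon=\tau_\varepsilon\sum_j w_j\zeta_j+\tau_\varepsilon\,h_\varepsilon ,
\]
with a correction $h_\varepsilon\perp E^N_{\varepsilon,3}$ chosen to restore positivity. The difficulty is that $\zeta_2$ changes sign, or more generally the $\zeta_j$ are not positive, and a bulk positive function still has components along $\zeta_j$. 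I would instead solve a linear system: pick three smooth positive bump profiles $g_k$ localized near $x_k$, each equal to a constant near its well. Since $\langle g_k,\zeta_j\rangle\approx \delta_{jk}\,g_k(x_k)\,\|\widehat\xi_k\|_{L^1}$ with $\|\widehat\xi_k\|_{L^1}\sim\varepsilon^{1/4}$, the Gram matrix $\bigl(\langle g_k,\zeta_j\rangle\bigr)_{j,k}$ is invertible after rescaling. I then set $c_\varepsilon=\kappa_\varepsilon\bigl(1+\sum_k s_k^\varepsilon g_k\bigr)$, with the scalars $\kappa_\varepsilon$ and $s^\varepsilon$ solving $\gamma_\varepsilon=\tau_\varepsilon w$ exactly. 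Positivity of $c_\varepsilon$ requires $1+\sum s_k^\varepsilon g_k>0$. This is the main obstacle, since the constant part $1$ projects onto $\zeta_j$ with weight $\sim\varepsilon^{1/4}$, comparable to the bumps. I would handle it by choosing $\kappa_\varepsilon$ such that the constant part produces a vector $\propto(\langle1,\zeta_j\rangle)_j\approx\varepsilon^{1/4}C\,p$-like. The condition then becomes that $w$ lies in the positive cone attainable as $\lambda\,\mathbf{1}$-contribution plus nonnegative bump corrections. Since $w>0$, one can take $\kappa_\varepsilon$ small enough that $w-\kappa'(\ldots)>0$ componentwise, and then solve for $s_k^\varepsilon>0$. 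This yields the affine relation between $c_\varepsilon(x_2)$ and $c_\varepsilon(x_3)$ mentioned earlier. Finally, $\langle c_\varepsilon,\phi_\varepsilon\rangle=(p^\varepsilon)^T\gamma_\varepsilon=\tau_\varepsilon w^Tp+o(\tau_\varepsilon)$. An $\varepsilon$-dependent rescaling of $\kappa_\varepsilon$ by the factor $1+o(1)$ makes this an equality, while preserving $\gamma_\varepsilon=\tau_\varepsilon w+o(\tau_\varepsilon)$. Combining the pieces gives \eqref{eq:positive-profile-matrix-realization}.
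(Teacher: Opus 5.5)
Your proposal is correct and follows essentially the paper's argument: the rank-one factorization $J_\varepsilon^*\bigl((b_\eta\phi_\varepsilon)\otimes c_\varepsilon\bigr)J_\varepsilon=[J_\varepsilon^*(b_\eta\phi_\varepsilon)][J_\varepsilon^*c_\varepsilon]^T$, an affine $b_\eta=(1-\eta)b_0+\eta b_H$ with $b_H=v_j^H/p_j$ near $x_j$ (combined with the exponential localization of $\zeta_{j,\varepsilon}$ and $\rho_\varepsilon\to p$), and a weight $c_\varepsilon$ built from a tiny positive background plus nonnegative bumps of height of order $\tau_\varepsilon\varepsilon^{-1/4}$ calibrated against $\langle\cdot,\zeta_{j,\varepsilon}\rangle$. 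In particular, the positivity obstruction you anticipate never arises, and your remark that a positive function cannot carry such small mass near the wells is false; your own final construction is exactly such a function.

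The only difference is cosmetic. The paper takes the background $\delta_\varepsilon h_0$ with $\delta_\varepsilon=o(\tau_\varepsilon)$ and enforces $\langle c_\varepsilon,\phi_\varepsilon\rangle=\tau_\varepsilon w^Tp$ by perturbing only the third weight $w_{3,\varepsilon}$, instead of solving the full $3\times3$ system and rescaling globally. This has the side benefit, used in the proof of Theorem~\ref{thm:three-peak-hopf}, that $c_\varepsilon$ near $x_1$ depends only on $w_1$.
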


\begin{proof}
We use the notation
\begin{equation}
\begin{gathered}
p=\tfrac12(1,\sqrt2,1)^T,\qquad
D=\operatorname{diag}(v_1^H/p_1,v_2^H/p_2,v_3^H/p_3),\\
D_\eta=(1-\eta)b_0I+\eta D,\qquad M(\eta)=G_0-D_\eta p\,w^T.
\end{gathered}
\label{eq:positive-profile-data}
\end{equation}
and the associated projections
\begin{equation}
J_\varepsilon z=\sum_{j=1}^3z_j\zeta_{j,\varepsilon},\qquad
J_\varepsilon^*J_\varepsilon=I,\qquad
P_\varepsilon=J_\varepsilon J_\varepsilon^*=\Pi_\varepsilon^N,\qquad
Q_\varepsilon=I-P_\varepsilon.
\label{eq:positive-profile-projections}
\end{equation}
All constants below are independent of \(\varepsilon\) and
\(\eta\in I_\delta\), but may depend on \(b_0,v^H,w\) and the
fixed local neighborhoods.

Let \(U_1,U_2,U_3\Subset\Omega\) be pairwise disjoint open intervals
such that \(x_j\in U_j\Subset\Omega_j\),
\(U_j\cap\overline\Omega_k=\varnothing\) for \(k\ne j\), and
\(\chi=1\) on \(U_j\), where \(\chi\) is the cutoff fixed in
\eqref{eq:nd-cutoff}. This choice is possible because \(r<S\).
Let \(\widehat\xi_{j,\varepsilon}\) be the extension by zero to
\(\Omega_0\) of the local positive first eigenfunction
\(\xi_{j,\varepsilon}\) on \(\Omega_j\). Formula
\eqref{eq:transfer-isometry} and the basis construction in
Lemma~\ref{lem:three-well-reduction} give
\begin{equation}
\|\zeta_{j,\varepsilon}-J_\chi\widehat\xi_{j,\varepsilon}\|_{L^2(\Omega)}
\le C_{\rm B}e^{-\kappa_{\rm B}/\varepsilon},\qquad j=1,2,3.
\label{eq:positive-profile-basis-comparison}
\end{equation}
Choose \(\widehat\eta_j\in C_c^\infty(U_j)\), nonnegative and equal
to one near \(x_j\). The local harmonic approximation gives
\begin{equation*}
m_{j,\varepsilon}:=
\int_{U_j}\widehat\eta_j\widehat\xi_{j,\varepsilon}\,\mathrm dx
\ge c_{\rm M}\varepsilon^{1/4}.
\end{equation*}
By \eqref{eq:positive-profile-basis-comparison} and the disjoint supports,
\begin{equation*}
\left|\int_\Omega\widehat\eta_i\zeta_{j,\varepsilon}\,\mathrm dx
-\delta_{ij}m_{i,\varepsilon}\right|
\le C_{\rm J}e^{-\kappa_{\rm B}/\varepsilon},\qquad 1\le i,j\le3.
\end{equation*}
These two estimates imply that the denominator below
is at least \(\frac12c_{\rm M}\varepsilon^{1/4}>0\) for small
\(\varepsilon\). Define
\begin{equation*}
\eta_{j,\varepsilon}
=\frac{\widehat\eta_j}
{\displaystyle\int_\Omega\widehat\eta_j\zeta_{j,\varepsilon}\,\mathrm dx}.
\end{equation*}
The functions \(\eta_{j,\varepsilon}\) satisfy
\begin{equation}
\begin{gathered}
\left|\langle\eta_{i,\varepsilon},\zeta_{j,\varepsilon}\rangle
-\delta_{ij}\right|
\le\frac{2C_{\rm J}}{c_{\rm M}}
\varepsilon^{-1/4}e^{-\kappa_{\rm B}/\varepsilon}=o(1),\quad
\|\eta_{j,\varepsilon}\|_{L^2(\Omega)}\le C_\eta\varepsilon^{-1/4},
\qquad 1\le i,j\le3.
\end{gathered}
\label{eq:positive-profile-cutoff-estimates}
\end{equation}
Recall that \(\phi_\varepsilon\) is the normalized positive first Neumann
eigenfunction, with \(A_\varepsilon\phi_\varepsilon=0\).
Since \(\phi_\varepsilon\in E_{\varepsilon,3}^{N}\), the isometry
\eqref{eq:positive-profile-projections} gives
\begin{equation}
\phi_\varepsilon=J_\varepsilon\rho_\varepsilon,
\qquad\rho_\varepsilon=(\rho_{1,\varepsilon},
\rho_{2,\varepsilon},\rho_{3,\varepsilon})^T,
\qquad \|\rho_\varepsilon\|_2=1.
\label{eq:positive-profile-ground-coordinates}
\end{equation}
 Lemma~\ref{lem:three-well-reduction} implies that
\begin{equation*}
0=J_\varepsilon^*A_\varepsilon\phi_\varepsilon
=\tau_\varepsilon(G_0+o(1))\rho_\varepsilon.
\end{equation*}
{\color{black}It follows that every limit point belongs to}
\(\ker G_0=\operatorname{span}\{(1,\sqrt2,1)^T\}\). The signs of
\(\zeta_{j,\varepsilon}\) are consistent with \(\phi_\varepsilon>0\);
hence \(\rho_\varepsilon\to p\) as \(\varepsilon\to0^+\).

By \eqref{eq:positive-profile-cutoff-estimates} and
\eqref{eq:positive-profile-ground-coordinates},
\begin{equation*}
\langle\eta_{j,\varepsilon},\phi_\varepsilon\rangle\longrightarrow p_j>0
\qquad(\varepsilon\to0^+),\qquad j=1,2,3.
\end{equation*}
For smooth \(h_0>0\) on \(\overline\Omega\), choose
\(\delta_\varepsilon>0\) with \(\delta_\varepsilon=o(\tau_\varepsilon)\)
as \(\varepsilon\to0^+\), and put
\begin{equation}
w_{3,\varepsilon}:=
\frac{w^Tp-\displaystyle\sum_{j=1}^2w_j
\langle\eta_{j,\varepsilon},\phi_\varepsilon\rangle
-(\delta_\varepsilon/\tau_\varepsilon)\langle h_0,\phi_\varepsilon\rangle}
{\langle\eta_{3,\varepsilon},\phi_\varepsilon\rangle}
\longrightarrow w_3>0\qquad(\varepsilon\to0^+).
\label{eq:positive-profile-third-weight}
\end{equation}
Define
\begin{equation}
c_\varepsilon
=\tau_\varepsilon\bigl(w_1\eta_{1,\varepsilon}
+w_2\eta_{2,\varepsilon}+w_{3,\varepsilon}\eta_{3,\varepsilon}\bigr)
+\delta_\varepsilon h_0.
\label{eq:positive-profile-observation-weight}
\end{equation}
Then \(c_\varepsilon>0\), and \eqref{eq:positive-profile-third-weight} gives
\begin{equation}
\langle c_\varepsilon,\phi_\varepsilon\rangle
=\tau_\varepsilon w^Tp.
\label{eq:positive-profile-exact-mass}
\end{equation}
By \eqref{eq:positive-profile-cutoff-estimates} and
\eqref{eq:positive-profile-third-weight}, we have
\begin{equation}
\langle c_\varepsilon,\zeta_{j,\varepsilon}\rangle
=\tau_\varepsilon w_j+o(\tau_\varepsilon),\qquad
\|c_\varepsilon\|_{L^2(\Omega)}\le C_c\tau_\varepsilon\varepsilon^{-1/4}.
\label{eq:positive-profile-observation-estimates}
\end{equation}
Set $\widehat c_\varepsilon:=c_\varepsilon/\tau_\varepsilon$.
Since \((J_\varepsilon^*c_\varepsilon)_j
=\langle c_\varepsilon,\zeta_{j,\varepsilon}\rangle\) by
\eqref{eq:positive-profile-projections}, one has, as \(\varepsilon\to0^+\),
\begin{equation}
J_\varepsilon^*\widehat c_\varepsilon\longrightarrow w.
\label{eq:hopf-weight-limit}
\end{equation}

With \(\beta_j=v_j^H/p_j\) as in \eqref{eq:positive-profile-data},
choose smooth cutoffs
\(0\le\theta_j\le1\), equal to one on \(U_j\), with pairwise disjoint
supports in \(\Omega_j\). Define
\begin{equation}
\begin{aligned}
b_H=b_0+\sum_{j=1}^3(\beta_j-b_0)\theta_j,\qquad
b_{\varepsilon,\eta}=b_\eta=(1-\eta)b_0+\eta b_H.
\end{aligned}
\label{eq:hopf-bounded-realization}
\end{equation}
The function \(b_H\)  lies between
the positive minimum and maximum of \(b_0,\beta_1,\beta_2,\beta_3\).
{\color{black}For \(\delta>0\) sufficiently small, a positive lower bound also holds
on \(I_\delta\).} Since \(\partial_\eta b_\eta=b_H-b_0\), one has
\begin{equation}
b_{\varepsilon,0}=b_0,\qquad
0<b_-\le b_\eta\le b_+\le C_b,\qquad
\|\partial_\eta b_\eta\|_{L^\infty(\Omega)}\le C_b.
\label{eq:hopf-multiplier-bounds}
\end{equation}

The local Dirichlet decay estimate in
\cite[Proposition~3.3.5]{Helffer1988}, together with
\eqref{eq:positive-profile-basis-comparison}, gives
\begin{equation}
\|\zeta_{j,\varepsilon}\|_{L^2(\Omega\setminus U_j)}
\le Ce^{-\kappa/\varepsilon},\qquad j=1,2,3.
\label{eq:positive-profile-basis-localization}
\end{equation}
On \(U_j\), the multiplier \(b_\eta\) equals the \(j\)-th diagonal
entry of \(D_\eta\), and the same holds for their \(\eta\)-derivatives.
By \eqref{eq:positive-profile-basis-localization}, we obtain
\begin{equation}
\sup_{\eta\in I_\delta}\max_{k=0,1}
\bigl\|\partial_\eta^k(b_\eta J_\varepsilon-J_\varepsilon D_\eta)\bigr\|
\le Ce^{-\kappa/\varepsilon}.
\label{eq:hopf-multiplier-localization}
\end{equation}
Together with \eqref{eq:positive-profile-ground-coordinates} and
\(\rho_\varepsilon\to p\) as \(\varepsilon\to0^+\), one has 
\begin{equation}
J_\varepsilon^*(b_\eta\phi_\varepsilon)
=D_\eta\rho_\varepsilon+O(e^{-\kappa/\varepsilon})
\longrightarrow D_\eta p
=(1-\eta)b_0p+\eta v^H
\quad\hbox{in }C^1(I_\delta;\mathbb R^3)\quad(\varepsilon\to0^+).
\label{eq:positive-profile-multiplier-projection}
\end{equation}
In particular, \eqref{eq:positive-profile-multiplier-projection} at
\(\eta=1\) gives \(J_\varepsilon^*(b_H\phi_\varepsilon)\to v^H\) as \(\varepsilon\to0^+\).
By \eqref{eq:hopf-multiplier-bounds},
\[
\max_{k=0,1}\|\partial_\eta^k(b_\eta\phi_\varepsilon)\|_{L^2(\Omega)}\le C_b.
\]
For \(f,g\in L^2(\Omega)\), the operator norm satisfies
\[
\|f\otimes g\|=\|f\|_{L^2(\Omega)}\|g\|_{L^2(\Omega)}.
\]
 Equation~\eqref{eq:positive-profile-observation-estimates} gives
\begin{equation}
\sup_{\eta\in I_\delta}\max_{k=0,1}
\bigl\|\partial_\eta^k
\bigl((b_\eta\phi_\varepsilon)\otimes c_\varepsilon\bigr)\bigr\|
\le C_bC_c\tau_\varepsilon\varepsilon^{-1/4}.
\label{eq:hopf-bounded-rank-one}
\end{equation}

Finally, \eqref{eq:hopf-weight-limit} and
\eqref{eq:positive-profile-multiplier-projection} imply
\begin{equation*}
\begin{aligned}
J_\varepsilon^*\bigl((b_\eta\phi_\varepsilon)\otimes c_\varepsilon\bigr)
J_\varepsilon
&=\bigl[J_\varepsilon^*(b_\eta\phi_\varepsilon)\bigr]
\bigl[J_\varepsilon^*c_\varepsilon\bigr]^T=\tau_\varepsilon D_\eta p\,w^T+o(\tau_\varepsilon)
\quad\hbox{in }C^1(I_\delta;\mathbb R^{3\times3}).
\end{aligned}
\end{equation*}
Lemma~\ref{lem:three-well-reduction} also gives
\(J_\varepsilon^*A_\varepsilon J_\varepsilon
=\tau_\varepsilon G_0+o(\tau_\varepsilon)\).
{\color{black}Subtracting the preceding identity for the nonlocal term, we obtain
\eqref{eq:positive-profile-matrix-realization}.}
\end{proof}

{\color{black}We now use Lemmas~\ref{lem:matrix-choice} and~\ref{lem:positive-profiles}
to prove the Hopf bifurcation result for the equation.}

\begin{proof}[Proof of Theorem~\ref{thm:three-peak-hopf}]
    \setcounter{proofstep}{0}

    Fix \(V\) as in \eqref{eq:counterexample-potential};
    the verification above shows that \eqref{ass:tunneling} holds.
    Use the common matrix \(D\) in \eqref{eq:hopf-common-data} and either
    \(w=w^{\rm s}\) or \(w=w^{\rm u}\), with \(b_0=1\).
    Let \(\eta_0\in(0,1)\) be the corresponding Hopf value from
    Lemma~\ref{lem:matrix-choice}. Write \(M(\eta)=G_0-D_\eta p\,w^T\), as in
    \eqref{eq:hopf-endpoint-path}, and use the coefficients constructed
    in \eqref{eq:positive-profile-observation-weight} and
    \eqref{eq:hopf-bounded-realization}, with \(v^H=Dp\) and
    \(b_{\varepsilon,\eta}=b_\eta\). {\color{black}We use
    \(J_\varepsilon,P_\varepsilon,Q_\varepsilon\) from
    \eqref{eq:positive-profile-projections}.}
    
    \medskip
\refstepcounter{proofstep}\label{step:hopf-crossing-realization}\label{step:hopf-crossing-resolvent}
\noindent\textbf{Step \theproofstep.}
    Let \(b_{\varepsilon,\eta}\) and \(c_\varepsilon\) be given by
    Lemma~\ref{lem:positive-profiles}. Put
    \begin{equation*}
    S_\varepsilon^*
    =
    \int_\Omega c_\varepsilon(y)\phi_\varepsilon(y)\,\mathrm{d}y
    =100\tau_\varepsilon,
    \qquad
    a_{\varepsilon,\eta}(x)
    =
    q(x)-\nu_\varepsilon+b_{\varepsilon,\eta}(x)S_\varepsilon^*.
    \end{equation*}
    Here \(S_\varepsilon^*\) is the same for both choices by
    \eqref{eq:hopf-common-mass} and \eqref{eq:positive-profile-exact-mass}.
    Hence \(a_{\varepsilon,\eta}\) and \(b_{\varepsilon,\eta}\) are also the same.
    Use the same cutoffs, \(h_0\), and \(\delta_\varepsilon\) in
    \eqref{eq:positive-profile-observation-weight}. Since
    \(w_1^{\rm s}=w_1^{\rm u}\), the two choices of \(c_\varepsilon\)
    agree outside \(U_2\cup U_3\), in particular on \(U_1\).
    Therefore,
    \begin{equation*}
    \varepsilon^2\phi_\varepsilon''
    +
    \phi_\varepsilon
    \left(
    a_{\varepsilon,\eta}(x)-b_{\varepsilon,\eta}(x)S_\varepsilon^*
    \right)
    =
    \varepsilon^2\phi_\varepsilon''
    +
    (q-\nu_\varepsilon)\phi_\varepsilon
    =
    0.
    \end{equation*}
 Thus,
    \(u_\varepsilon^*=\phi_\varepsilon\) is a positive steady
    state for every \(\eta\).
     The linearized operator at \(u_\varepsilon^*=\phi_\varepsilon\) is
    \begin{equation}
    \mathcal L_{\varepsilon,\eta}\xi
    =
    A_\varepsilon\xi
    -
    b_{\varepsilon,\eta}\phi_\varepsilon
    \int_\Omega c_\varepsilon(y)\xi(y)\,\mathrm{d}y .
    \label{eq:hopf-equilibrium-linearization}
    \end{equation}
    
    \medskip
    We identify \(E_{\varepsilon,3}^{N}\) with \(\mathbb C^3\) through
    \(J_\varepsilon\) in \eqref{eq:positive-profile-projections}.
    Formula~\eqref{eq:positive-profile-matrix-realization} gives
    \begin{equation*}
    J_\varepsilon^*\mathcal L_{\varepsilon,\eta}J_\varepsilon
    =
    \tau_\varepsilon M(\eta)+o(\tau_\varepsilon)
    \quad\text{in }C^1(I_\delta;\mathbb R^{3\times3}).
    \end{equation*} 
    On \(L^2(\Omega)=E_{\varepsilon,3}^{N}\oplus(E_{\varepsilon,3}^{N})^\perp\), write
    \begin{equation}
    \begin{gathered}
    \mathcal L_{\varepsilon,\eta}
    =
    \begin{pmatrix}
    L_{11}^{\varepsilon,\eta} & L_{12}^{\varepsilon,\eta}\\
    L_{21}^{\varepsilon,\eta} & L_{22}^{\varepsilon,\eta}
    \end{pmatrix}.
    \end{gathered}
    \label{eq:hopf-block-matrix}
    \end{equation}
    Since \(E_{\varepsilon,3}^{N}\) is a spectral subspace of \(A_\varepsilon\),
    \begin{equation*}
    \Pi_\varepsilon^NA_\varepsilon(I-\Pi_\varepsilon^N)=0,\qquad
    \text{and}\qquad
    (I-\Pi_\varepsilon^N)A_\varepsilon\Pi_\varepsilon^N=0 .
    \end{equation*}
    Hence, for any \(0<\varepsilon<1\),
    \eqref{eq:hopf-bounded-rank-one} gives
    \begin{equation}
    \label{eq:off-diagonal-block-bound}
    \|L_{12}^{\varepsilon,\eta}\|+\|L_{21}^{\varepsilon,\eta}\|
    \le
    2C_bC_c\tau_\varepsilon\varepsilon^{-1/2}.
    \end{equation}
    On \((E_{\varepsilon,3}^{N})^\perp\), Lemma~\ref{lem:three-well-reduction} gives
    \begin{equation}
    \sigma(A_\varepsilon|_{(E_{\varepsilon,3}^{N})^\perp})
    \subset(-\infty,-\gamma_0\varepsilon].
    \label{eq:hopf-complement-gap}
    \end{equation}
    Moreover, \eqref{eq:hopf-bounded-rank-one} gives
    \begin{equation*}
    \|(I-\Pi_\varepsilon^N)
    [(b_{\varepsilon,\eta}\phi_\varepsilon)\otimes c_\varepsilon]
    (I-\Pi_\varepsilon^N)\|
    \le
    C_bC_c\tau_\varepsilon\varepsilon^{-1/2}
    =o(\varepsilon).
    \end{equation*}
    Thus, for all sufficiently small \(\varepsilon>0\), one has
    \begin{equation*}
    C_bC_c\tau_\varepsilon\varepsilon^{-1/2}
    \le \frac{\gamma_0}{4}\varepsilon.
    \end{equation*}
    For \(\operatorname{Re}\lambda\ge-\gamma_0\varepsilon/2\),
    \eqref{eq:hopf-complement-gap} and self-adjointness give
    \(\|(\lambda-A_\varepsilon|_{(E_{\varepsilon,3}^{N})^\perp})^{-1}\|
    \le2/(\gamma_0\varepsilon)\).
   Then one has
    \begin{equation}
    \label{eq:complement-resolvent-bound}
    \|(\lambda-L_{22}^{\varepsilon,\eta})^{-1}\|
    \le
    \frac{4}{\gamma_0\varepsilon},
    \qquad \operatorname{Re}\lambda\ge-\gamma_0\varepsilon/2,
    \end{equation}
    uniformly for \(\eta\) near \(\eta_0\). Also,
  $
    \tau_\varepsilon\varepsilon^{-N}=o(\varepsilon)~
   \hbox{for every fixed }N>0.
    $
    
    \medskip
\refstepcounter{proofstep}\label{step:hopf-crossing-crossing}\label{step:hopf-crossing-bifurcation}
\noindent\textbf{Step \theproofstep.}
    Let \(\lambda_\pm(\eta)\) be the two eigenvalue branches of \(M(\eta)\)
    which satisfy
    \begin{equation*}
    \lambda_+(\eta_0)=i\omega_0,\qquad
    \lambda_-(\eta_0)=-i\omega_0,\qquad\text{and}\qquad
    \omega_0>0,
    \end{equation*}
    and, by Lemma~\ref{lem:matrix-choice},
    \begin{equation}
    \left.\frac{\mathrm{d}}{\mathrm{d}\eta}
    \operatorname{Re}\lambda_+(\eta)\right|_{\eta=\eta_0}>0 .
    \label{eq:hopf-reduced-speed}
    \end{equation}
    Choose \(r>0\) small and, for \(\eta\) near \(\eta_0\), define the circles
    \begin{equation*}
    \Gamma_{\varepsilon,\pm}(\eta)
    =
    \{\lambda:\ |\lambda-\tau_\varepsilon\lambda_\pm(\eta)|
    =r\tau_\varepsilon\}
    \end{equation*}
    Each circle \(\Gamma_{\varepsilon,\pm}(\eta)\) encloses only
    the corresponding point of \(\tau_\varepsilon\sigma(M(\eta))\). For
    $\operatorname{Re}\lambda\ge-\gamma_0\varepsilon/2$,
    \eqref{eq:off-diagonal-block-bound} and
    \eqref{eq:complement-resolvent-bound} give
    \begin{equation}
    \left\|
    L_{12}^{\varepsilon,\eta}
    (\lambda-L_{22}^{\varepsilon,\eta})^{-1}
    L_{21}^{\varepsilon,\eta}
    \right\|
    \le
    \frac{4C_b^2C_c^2}{\gamma_0}
    \tau_\varepsilon^2\varepsilon^{-2}
    =o(\tau_\varepsilon).
    \label{eq:hopf-schur-bound}
    \end{equation}
    Applying the Schur complement reduction to \eqref{eq:hopf-block-matrix}
    \cite[Theorem~1.2]{DussonSigalStamm2021}, for
    \(z=x+y\), \(x\in E_{\varepsilon,3}^{N}\) and
    \(y\in(E_{\varepsilon,3}^{N})^\perp\),
    \((\mathcal L_{\varepsilon,\eta}-\lambda)z=0\) is equivalent to
    \begin{equation}
    y=(\lambda-L_{22}^{\varepsilon,\eta})^{-1}
    L_{21}^{\varepsilon,\eta}x
    \label{eq:hopf-high-component}
    \end{equation}
    and
    \begin{equation}
    \begin{gathered}
    (\lambda I-\mathcal S_\varepsilon(\lambda,\eta))x=0,\quad
    \mathcal S_\varepsilon(\lambda,\eta)
    =
    L_{11}^{\varepsilon,\eta}
    +
    L_{12}^{\varepsilon,\eta}
    (\lambda-L_{22}^{\varepsilon,\eta})^{-1}
    L_{21}^{\varepsilon,\eta}.
    \end{gathered}
    \label{eq:hopf-schur-reduction}
    \end{equation}
    By
    \eqref{eq:positive-profile-matrix-realization} and \eqref{eq:hopf-schur-bound},
    uniformly on the half-plane in \eqref{eq:complement-resolvent-bound}, one has
    \begin{equation}
    \mathcal S_\varepsilon(\lambda,\eta)
    =
    \tau_\varepsilon M(\eta)+o(\tau_\varepsilon).
    \label{eq:hopf-schur-limit}
    \end{equation}
    Set
    \begin{equation}
    D_\varepsilon(\lambda,\eta)
    =
    \det(\lambda I-\mathcal S_\varepsilon(\lambda,\eta)),
    \qquad\text{and}\qquad
    D_\varepsilon^0(\lambda,\eta)
    =
    \det(\lambda I-\tau_\varepsilon M(\eta)).
    \label{eq:hopf-characteristic-determinants}
    \end{equation}
    {\color{black}Applying Rouch\'e's theorem to the determinants in
    \eqref{eq:hopf-characteristic-determinants}, using
    \eqref{eq:hopf-schur-limit} and the scaling \(\lambda=\tau_\varepsilon z\),
    we obtain exactly one zero of \(D_\varepsilon\) inside each
    \(\Gamma_{\varepsilon,\pm}(\eta)\).} Thus the corresponding eigenvalues of
    \(\mathcal L_{\varepsilon,\eta}\) satisfy
    \begin{equation}
    \Lambda_{\varepsilon,\pm}(\eta)
    =
    \tau_\varepsilon\lambda_\pm(\eta)+o(\tau_\varepsilon).
    \label{eq:hopf-eigenvalue-lifting}
    \end{equation}
    Since \(A_\varepsilon,c_\varepsilon\) are independent of \(\eta\)
    and \(b_{\varepsilon,\eta}\) is affine in \(\eta\),
    \eqref{eq:positive-profile-matrix-realization}, \eqref{eq:hopf-schur-bound}, and
    \eqref{eq:hopf-schur-limit} also hold after differentiation in \(\eta\).
    The implicit function theorem for the simple roots in
    \eqref{eq:hopf-characteristic-determinants} gives
    \begin{equation}
    \frac{\mathrm{d}}{\mathrm{d}\eta}
    \operatorname{Re}\Lambda_{\varepsilon,+}(\eta)
    =
    \tau_\varepsilon
    \frac{\mathrm{d}}{\mathrm{d}\eta}
    \operatorname{Re}\lambda_+(\eta)
    +
    o(\tau_\varepsilon).
    \label{eq:hopf-eigenvalue-derivative}
    \end{equation}
    By \eqref{eq:hopf-reduced-speed} and \eqref{eq:hopf-eigenvalue-derivative},
    the implicit function theorem gives \(\eta_\varepsilon=\eta_0+o(1)\) as \(\varepsilon\to0^+\) such that
    \begin{equation}
    \begin{gathered}
    \operatorname{Re}\Lambda_{\varepsilon,+}(\eta_\varepsilon)=0,\quad
    \omega_\varepsilon:=\operatorname{Im}\Lambda_{\varepsilon,+}(\eta_\varepsilon)
    =\tau_\varepsilon\omega_0+o(\tau_\varepsilon)>0,\\
    \alpha_\varepsilon:=
    \left.\frac{\mathrm{d}}{\mathrm{d}\eta}
    \operatorname{Re}\Lambda_{\varepsilon,+}(\eta)
    \right|_{\eta=\eta_\varepsilon}>0 .
    \end{gathered}
    \label{eq:hopf-critical-parameter}
    \end{equation}
    For \(\eta\) near \(\eta_0\), let \(\lambda_3(\eta)<0\) be the third
    eigenvalue of \(M(\eta)\). By \eqref{eq:hopf-schur-limit}, on the
    half-plane in \eqref{eq:complement-resolvent-bound}, for fixed \(R>C\),
    \begin{equation*}
    \|\mathcal S_\varepsilon(\lambda,\eta)\|\le C\tau_\varepsilon,
    \qquad
    |\lambda|>R\tau_\varepsilon
    \ \Longrightarrow\ D_\varepsilon(\lambda,\eta)\ne0.
    \end{equation*}
    Rouch\'e's theorem on \(|\lambda|=R\tau_\varepsilon\) and a small circle
    about \(\tau_\varepsilon\lambda_3(\eta)\) gives
    exactly three simple eigenvalues in this half-plane:
    \begin{equation*}
    \Lambda_{\varepsilon,\pm}(\eta),\qquad
    \tau_\varepsilon\lambda_3(\eta)+o(\tau_\varepsilon)<0
    \quad(\varepsilon\to0^+).
    \end{equation*}
    All remaining eigenvalues satisfy
    \(\operatorname{Re}\lambda<-\gamma_0\varepsilon/2\).
    Since \(\eta_\varepsilon\to\eta_0\in(0,1)\) as \(\varepsilon\to0^+\), we have
    \(\eta_\varepsilon\in I_\delta\) for small \(\varepsilon\), so
    \eqref{eq:hopf-multiplier-bounds} applies.

    By \eqref{eq:hopf-critical-parameter} and stability of the
    noncritical spectrum, there is
    \(\delta_\varepsilon>0\) such that
    \begin{equation*}
    \begin{aligned}
    \operatorname{Re}\Lambda_{\varepsilon,+}(\eta)&<0
    \quad\text{for }\eta\in(\eta_\varepsilon-\delta_\varepsilon,
    \eta_\varepsilon),\quad
    \operatorname{Re}\Lambda_{\varepsilon,+}(\eta)>0
    \quad\text{for }\eta\in(\eta_\varepsilon,
    \eta_\varepsilon+\delta_\varepsilon).
    \end{aligned}
    \end{equation*}
  Thus, \(u_\varepsilon^*\) is linearly stable on the
    left-hand interval and linearly unstable on the right-hand interval.
    This proves part~\textup{(\ref{item:hopf-crossing})}.

\medskip
\refstepcounter{proofstep}\label{step:hopf-branches-coefficient}\label{step:hopf-branches-convergence}
\noindent\textbf{Step \theproofstep.}
For \(v=u-\phi_\varepsilon\), the shifted equation is
\begin{equation}
\begin{aligned}
\partial_t v&=\mathcal L_{\varepsilon,\eta}v
+\tfrac12\mathcal B_{\varepsilon,\eta}(v,v),\quad
\mathcal B_{\varepsilon,\eta}(\xi,\zeta)
=-b_\eta\left[\xi\int_\Omega c_\varepsilon\zeta+
\zeta\int_\Omega c_\varepsilon\xi\right].
\end{aligned}
\label{eq:hopf-full-bilinear}
\end{equation}
For fixed \(\varepsilon\), the bilinear map in \eqref{eq:hopf-full-bilinear}
is bounded on complex \(L^2(\Omega)\), with complex-linear integrals.
\par\Needspace{5\baselineskip}
On \(X=C(\overline\Omega;\mathbb R)\), let
\(\mathcal A_\varepsilon v=\varepsilon^2v''+(q-\nu_\varepsilon)v\), with
\(D(\mathcal A_\varepsilon)=\{v\in C^2(\overline\Omega):v'=0
\text{ on }\partial\Omega\}\).
The operator \(\mathcal A_\varepsilon\) is sectorial with compact resolvent,
and the integral term in \eqref{eq:hopf-equilibrium-linearization} is
bounded on \(X\). Elliptic regularity identifies the spectrum of
\(\mathcal L_{\varepsilon,\eta}\) in \eqref{eq:hopf-equilibrium-linearization},
including algebraic multiplicities, on \(X\) and \(L^2(\Omega)\).
The map \(\tfrac12\mathcal B_{\varepsilon,\eta}(v,v)\) in
\eqref{eq:hopf-full-bilinear} is \(C^\infty\) in \((v,\eta)\).
Using inner products conjugate-linear in the first variable, choose
\begin{equation*}
\begin{gathered}
\mathcal L_{\varepsilon,\eta_\varepsilon} q_\varepsilon=i\omega_\varepsilon q_\varepsilon,
\quad \mathcal L_{\varepsilon,\eta_\varepsilon}^*p_\varepsilon=-i\omega_\varepsilon p_\varepsilon,\quad
\|q_\varepsilon\|_{L^2(\Omega)}=1,\quad
\langle p_\varepsilon,q_\varepsilon\rangle=1.
\end{gathered}
\label{eq:hopf-full-normalization}
\end{equation*}
The center-manifold reduction for \eqref{eq:hopf-full-bilinear}
\cite[Sections~6.1 and~6.4]{Henry1981} gives the quadratic coefficients
\(h_{11,\varepsilon}\) and \(h_{20,\varepsilon}\):
\begin{equation}
\begin{aligned}
h_{11,\varepsilon}&=-\mathcal L_{\varepsilon,\eta_\varepsilon}^{-1}
\mathcal B_{\varepsilon,\eta_\varepsilon}(q_\varepsilon,\overline q_\varepsilon),\quad
h_{20,\varepsilon}=(2i\omega_\varepsilon I-\mathcal L_{\varepsilon,\eta_\varepsilon})^{-1}
\mathcal B_{\varepsilon,\eta_\varepsilon}(q_\varepsilon,q_\varepsilon).
\end{aligned}
\label{eq:hopf-full-homological}
\end{equation}
Since \eqref{eq:hopf-full-bilinear} is quadratic, applying the formula in
\cite[Section~5.4.2, formula~(5.62)]{Kuznetsov1998} on the center manifold
gives the cubic coefficient \(G_{21,\varepsilon}\) and the first Lyapunov
coefficient \(\ell_\varepsilon\):
\begin{equation*}
\begin{aligned}
\ell_\varepsilon=\frac{\operatorname{Re}G_{21,\varepsilon}}{2\omega_\varepsilon},~ G_{21,\varepsilon}
=\left\langle p_\varepsilon,
2\mathcal B_{\varepsilon,\eta_\varepsilon}(q_\varepsilon,h_{11,\varepsilon})
+\mathcal B_{\varepsilon,\eta_\varepsilon}(\overline q_\varepsilon,h_{20,\varepsilon})
\right\rangle.
\end{aligned}
\end{equation*}

\medskip
Set
\begin{equation*}
\widehat{\mathcal B}_\varepsilon=
\mathcal B_{\varepsilon,\eta_\varepsilon}/\tau_\varepsilon.
\end{equation*}
Then 
\begin{equation}
\ell_\varepsilon
=\frac{\tau_\varepsilon}{2\omega_\varepsilon}\operatorname{Re}\left\langle p_\varepsilon,
2\widehat{\mathcal B}_\varepsilon(q_\varepsilon,h_{11,\varepsilon})
+\widehat{\mathcal B}_\varepsilon(\overline q_\varepsilon,h_{20,\varepsilon})
\right\rangle.
\label{eq:hopf-scaled-lyapunov}
\end{equation}
Estimates \eqref{eq:hopf-multiplier-bounds} and
\eqref{eq:hopf-weight-limit} give
\(\|\widehat{\mathcal B}_\varepsilon\|\le C\varepsilon^{-1/4}\).
By \eqref{eq:hopf-weight-limit},
\(J_\varepsilon^*\widehat c_\varepsilon\to w\) as \(\varepsilon\to0^+\).
By \eqref{eq:hopf-multiplier-localization}, we have
\(b_{\eta_\varepsilon}J_\varepsilon
=J_\varepsilon D_{\eta_\varepsilon}+O(e^{-\kappa/\varepsilon})\),
where \(D_{\eta_\varepsilon}\to D_{\eta_0}\) as \(\varepsilon\to0^+\).
It also gives \(Q_\varepsilon b_{\eta_\varepsilon}\phi_\varepsilon
=O(e^{-\kappa/\varepsilon})\).
Substituting into \eqref{eq:hopf-full-bilinear} and using the definition
of \(\mathcal B_0\) in \eqref{eq:hopf-endpoint-operators}, we obtain, as
\(\varepsilon\to0^+\), uniformly for bounded \(x,y\in\mathbb C^3\),
\begin{equation}
\begin{aligned}
J_\varepsilon^*\widehat{\mathcal B}_\varepsilon
(J_\varepsilon x,J_\varepsilon y)&\longrightarrow\mathcal B_0(x,y),\quad
\|Q_\varepsilon\widehat{\mathcal B}_\varepsilon
(J_\varepsilon x,J_\varepsilon y)\|_{L^2(\Omega)}
\le Ce^{-\kappa/\varepsilon}\|x\|_2\|y\|_2.
\end{aligned}
\label{eq:hopf-bilinear-projection}
\end{equation}

Recall \(P_\varepsilon,Q_\varepsilon\) from
\eqref{eq:positive-profile-projections}. Put
\begin{equation*}
\widetilde p_\varepsilon=\frac{p_\varepsilon}{\|p_\varepsilon\|_{L^2(\Omega)}},
\qquad
\|\widetilde p_\varepsilon\|_{L^2(\Omega)}=\|q_\varepsilon\|_{L^2(\Omega)}=1,
\qquad
\langle\widetilde p_\varepsilon,q_\varepsilon\rangle
=\frac1{\|p_\varepsilon\|_{L^2(\Omega)}}>0.
\end{equation*}
Equation~\eqref{eq:hopf-high-component} and the corresponding adjoint equation give
\begin{equation*}
\begin{aligned}
Q_\varepsilon q_\varepsilon
&=(i\omega_\varepsilon I-L_{22}^{\varepsilon,\eta_\varepsilon})^{-1}
L_{21}^{\varepsilon,\eta_\varepsilon}P_\varepsilon q_\varepsilon,\quad
Q_\varepsilon\widetilde p_\varepsilon
=\bigl(-i\omega_\varepsilon I-(L_{22}^{\varepsilon,\eta_\varepsilon})^*\bigr)^{-1}
(L_{12}^{\varepsilon,\eta_\varepsilon})^*P_\varepsilon\widetilde p_\varepsilon.
\end{aligned}
\end{equation*}
By \eqref{eq:complement-resolvent-bound} and \eqref{eq:hopf-bounded-rank-one},
and the orthogonality of \(P_\varepsilon,Q_\varepsilon\), as \(\varepsilon\to0^+\),
\begin{equation}
\|Q_\varepsilon q_\varepsilon\|_{L^2(\Omega)}+
\|Q_\varepsilon\widetilde p_\varepsilon\|_{L^2(\Omega)}
\le C\tau_\varepsilon\varepsilon^{-5/4}\longrightarrow0.
\label{eq:hopf-unit-complement}
\end{equation}
Consequently, as \(\varepsilon\to0^+\),
\begin{equation*}
\begin{gathered}
\|J_\varepsilon^*q_\varepsilon\|_2^2
=1-\|Q_\varepsilon q_\varepsilon\|_{L^2(\Omega)}^2\longrightarrow1,\qquad
\|J_\varepsilon^*\widetilde p_\varepsilon\|_2^2
=1-\|Q_\varepsilon\widetilde p_\varepsilon\|_{L^2(\Omega)}^2\longrightarrow1.
\end{gathered}
\end{equation*}
Equations \eqref{eq:hopf-schur-reduction}--\eqref{eq:hopf-schur-limit}
and \eqref{eq:hopf-critical-parameter} further give, as \(\varepsilon\to0^+\),
\begin{equation*}
\begin{aligned}
\bigl(A(D_{\eta_0},w)-i\omega_0I\bigr)J_\varepsilon^*q_\varepsilon
&\longrightarrow0,\quad
\bigl(A(D_{\eta_0},w)^*+i\omega_0I\bigr)J_\varepsilon^*\widetilde p_\varepsilon
\longrightarrow0.
\end{aligned}
\end{equation*}
Since \(i\omega_0\) is simple and
\(\langle\widetilde p_\varepsilon,q_\varepsilon\rangle>0\),
we can choose a common phase of \(q_\varepsilon,p_\varepsilon\), consistently
with \eqref{eq:hopf-endpoint-normalization}, so that, as \(\varepsilon\to0^+\),
\begin{equation*}
J_\varepsilon^*q_\varepsilon\longrightarrow q,\qquad
J_\varepsilon^*\widetilde p_\varepsilon
\longrightarrow\frac{p_H}{\|p_H\|_2}.
\end{equation*}
The orthogonal decomposition then yields, as \(\varepsilon\to0^+\),
\begin{equation*}
\begin{aligned}
\frac1{\|p_\varepsilon\|_{L^2(\Omega)}}
&=\langle\widetilde p_\varepsilon,q_\varepsilon\rangle
=\langle J_\varepsilon^*\widetilde p_\varepsilon,
J_\varepsilon^*q_\varepsilon\rangle
+\langle Q_\varepsilon\widetilde p_\varepsilon,
Q_\varepsilon q_\varepsilon\rangle
\longrightarrow\frac1{\|p_H\|_2}>0.
\end{aligned}
\end{equation*}
Thus \(\|p_\varepsilon\|_{L^2(\Omega)}\to\|p_H\|_2\) as \(\varepsilon\to0^+\).
Using \(p_\varepsilon=\|p_\varepsilon\|_{L^2(\Omega)}\widetilde p_\varepsilon\), we obtain,
as \(\varepsilon\to0^+\),
\begin{equation}
J_\varepsilon^*q_\varepsilon\to q,\qquad
J_\varepsilon^*p_\varepsilon\to p_H.
\label{eq:hopf-eigenvector-convergence}
\end{equation}
Since \(\|p_\varepsilon\|_{L^2(\Omega)}\to\|p_H\|_2\) as \(\varepsilon\to0^+\),
we have \(\|p_\varepsilon\|_{L^2(\Omega)}\le1+\|p_H\|_2\) for all sufficiently small
\(\varepsilon>0\). Using
\(p_\varepsilon=\|p_\varepsilon\|_{L^2(\Omega)}\widetilde p_\varepsilon\)
and \eqref{eq:hopf-unit-complement}, we obtain
\begin{equation}
\begin{aligned}
\|Q_\varepsilon q_\varepsilon\|_{L^2(\Omega)}+\|Q_\varepsilon p_\varepsilon\|_{L^2(\Omega)}
&=\|Q_\varepsilon q_\varepsilon\|_{L^2(\Omega)}
+\|p_\varepsilon\|_{L^2(\Omega)}\|Q_\varepsilon\widetilde p_\varepsilon\|_{L^2(\Omega)}\\
&\le(1+\|p_H\|_2)
\bigl(\|Q_\varepsilon q_\varepsilon\|_{L^2(\Omega)}
+\|Q_\varepsilon\widetilde p_\varepsilon\|_{L^2(\Omega)}\bigr)\\
&\le C\tau_\varepsilon\varepsilon^{-5/4}.
\end{aligned}
\label{eq:hopf-eigenfunction-complement}
\end{equation}
Here \(C\) has been enlarged to absorb the fixed factor
\(1+\|p_H\|_2\) and is independent of \(\varepsilon\).

Set
\begin{equation*}
f_{11,\varepsilon}=\widehat{\mathcal B}_\varepsilon
(q_\varepsilon,\overline q_\varepsilon),\qquad
f_{20,\varepsilon}=\widehat{\mathcal B}_\varepsilon(q_\varepsilon,q_\varepsilon).
\end{equation*}
Since \(\|\widehat{\mathcal B}_\varepsilon\|\le C\varepsilon^{-1/4}\) and
\(\|q_\varepsilon\|_{L^2(\Omega)}=1\), we have
\(\|f_{11,\varepsilon}\|_{L^2(\Omega)}+\|f_{20,\varepsilon}\|_{L^2(\Omega)}\le C\varepsilon^{-1/4}\).
By bilinearity and \eqref{eq:hopf-eigenfunction-complement}, for
\(v\in\{q_\varepsilon,\overline q_\varepsilon\}\), 
\begin{equation*}
\begin{aligned}
&\bigl\|\widehat{\mathcal B}_\varepsilon(q_\varepsilon,v)
-\widehat{\mathcal B}_\varepsilon(P_\varepsilon q_\varepsilon,P_\varepsilon v)\bigr\|_{L^2(\Omega)}\le C\varepsilon^{-1/4}\|Q_\varepsilon q_\varepsilon\|_{L^2(\Omega)}
\le C\tau_\varepsilon\varepsilon^{-3/2}.
\end{aligned}
\end{equation*}
Thus \eqref{eq:hopf-bilinear-projection} and
\eqref{eq:hopf-eigenvector-convergence} give, as \(\varepsilon\to0^+\),
\begin{equation*}
J_\varepsilon^*f_{11,\varepsilon}\longrightarrow\mathcal B_0(q,\overline q),
\qquad
J_\varepsilon^*f_{20,\varepsilon}\longrightarrow\mathcal B_0(q,q).
\end{equation*}
By \eqref{eq:hopf-full-homological} and
\(\mathcal B_{\varepsilon,\eta_\varepsilon}=\tau_\varepsilon\widehat{\mathcal B}_\varepsilon\),
we have \((\lambda I-\mathcal L_{\varepsilon,\eta_\varepsilon})h=\tau_\varepsilon f\) for
\begin{equation*}
(\lambda,h,f)=(0,h_{11,\varepsilon},f_{11,\varepsilon})
\quad\hbox{or}\quad
(2i\omega_\varepsilon,h_{20,\varepsilon},f_{20,\varepsilon}).
\end{equation*}
Using \eqref{eq:hopf-block-matrix} and \eqref{eq:hopf-schur-reduction}, we obtain
\begin{equation*}
\begin{aligned}
&\tau_\varepsilon^{-1}
\bigl[\lambda I-J_\varepsilon^*\mathcal S_\varepsilon
(\lambda,\eta_\varepsilon)J_\varepsilon\bigr]J_\varepsilon^*h=J_\varepsilon^*f+
J_\varepsilon^*L_{12}^{\varepsilon,\eta_\varepsilon}
(\lambda I-L_{22}^{\varepsilon,\eta_\varepsilon})^{-1}Q_\varepsilon f,
\end{aligned}
\end{equation*}
By \eqref{eq:hopf-schur-limit} and \eqref{eq:hopf-critical-parameter}, as
\(\varepsilon\to0^+\),
\begin{equation*}
\tau_\varepsilon^{-1}
\bigl[\lambda I-J_\varepsilon^*\mathcal S_\varepsilon
(\lambda,\eta_\varepsilon)J_\varepsilon\bigr]
\longrightarrow
\begin{cases}
-A(D_{\eta_0},w),&\lambda=0,\\
2i\omega_0I-A(D_{\eta_0},w),&\lambda=2i\omega_\varepsilon.
\end{cases}
\end{equation*}
Both limiting matrices are invertible. Moreover,
\eqref{eq:complement-resolvent-bound} and \eqref{eq:hopf-bounded-rank-one}
give
\begin{equation*}
\begin{aligned}
&\bigl\|J_\varepsilon^*L_{12}^{\varepsilon,\eta_\varepsilon}
(\lambda I-L_{22}^{\varepsilon,\eta_\varepsilon})^{-1}Q_\varepsilon f\bigr\|_2\le
(C\tau_\varepsilon\varepsilon^{-1/4})\frac{C}{\varepsilon}
(C\varepsilon^{-1/4})
\le C\tau_\varepsilon\varepsilon^{-3/2}.
\end{aligned}
\end{equation*}
Solving for \(J_\varepsilon^*h\), we obtain, as \(\varepsilon\to0^+\),
\begin{equation}
\begin{aligned}
J_\varepsilon^*h_{11,\varepsilon}
&\longrightarrow-A(D_{\eta_0},w)^{-1}\mathcal B_0(q,\overline q),\quad
J_\varepsilon^*h_{20,\varepsilon}
&\longrightarrow(2i\omega_0I-A(D_{\eta_0},w))^{-1}\mathcal B_0(q,q).
\end{aligned}
\label{eq:hopf-homological-convergence}
\end{equation}
In particular, \(\|P_\varepsilon h_{11,\varepsilon}\|_{L^2(\Omega)}
+\|P_\varepsilon h_{20,\varepsilon}\|_{L^2(\Omega)}\le C\).
For the same choices of \((\lambda,h,f)\), applying \(Q_\varepsilon\)
to \((\lambda I-\mathcal L_{\varepsilon,\eta_\varepsilon})h=\tau_\varepsilon f\) gives
\begin{equation*}
Q_\varepsilon h=(\lambda I-L_{22}^{\varepsilon,\eta_\varepsilon})^{-1}
\bigl(\tau_\varepsilon Q_\varepsilon f+
L_{21}^{\varepsilon,\eta_\varepsilon}P_\varepsilon h\bigr).
\end{equation*}
Using \eqref{eq:complement-resolvent-bound},
\eqref{eq:hopf-bounded-rank-one}, and the bounds on
\(f_{j,\varepsilon}\) and \(P_\varepsilon h_{j,\varepsilon}\) above, we obtain
\begin{equation}
\begin{aligned}
\|Q_\varepsilon h_{11,\varepsilon}\|_{L^2(\Omega)}+
\|Q_\varepsilon h_{20,\varepsilon}\|_{L^2(\Omega)}
&\le\frac{C}{\varepsilon}\sum_{j\in\{11,20\}}
\left(\tau_\varepsilon\|f_{j,\varepsilon}\|_{L^2(\Omega)}
+\|L_{21}^{\varepsilon,\eta_\varepsilon}\|\,
\|P_\varepsilon h_{j,\varepsilon}\|_{L^2(\Omega)}\right)\\
&\le C\tau_\varepsilon\varepsilon^{-5/4}.
\end{aligned}
\label{eq:hopf-homological-complement}
\end{equation}
Thus \(\|h_{11,\varepsilon}\|_{L^2(\Omega)}+\|h_{20,\varepsilon}\|_{L^2(\Omega)}\le C\).
For \((u,v)=(q_\varepsilon,h_{11,\varepsilon})\) or
\((\overline q_\varepsilon,h_{20,\varepsilon})\), the bound
\(\|\widehat{\mathcal B}_\varepsilon\|\le C\varepsilon^{-1/4}\),
\eqref{eq:hopf-eigenfunction-complement}, and
\eqref{eq:hopf-homological-complement} yield
\begin{equation*}
\begin{aligned}
&\left|\langle p_\varepsilon,\widehat{\mathcal B}_\varepsilon(u,v)\rangle
-\langle P_\varepsilon p_\varepsilon,
\widehat{\mathcal B}_\varepsilon(P_\varepsilon u,P_\varepsilon v)\rangle\right|\\
&\quad\le C\varepsilon^{-1/4}
\bigl(\|Q_\varepsilon p_\varepsilon\|_{L^2(\Omega)}+
\|Q_\varepsilon u\|_{L^2(\Omega)}+\|Q_\varepsilon v\|_{L^2(\Omega)}\bigr)\\
&\quad\le C\tau_\varepsilon\varepsilon^{-3/2}.
\end{aligned}
\end{equation*}
Since \(P_\varepsilon=J_\varepsilon J_\varepsilon^*\),
\eqref{eq:hopf-bilinear-projection},
\eqref{eq:hopf-eigenvector-convergence}, and
\eqref{eq:hopf-homological-convergence} give, as \(\varepsilon\to0^+\),
\begin{equation*}
\begin{aligned}
&\langle P_\varepsilon p_\varepsilon,
\widehat{\mathcal B}_\varepsilon
(P_\varepsilon q_\varepsilon,P_\varepsilon h_{11,\varepsilon})\rangle\longrightarrow
-\bigl\langle p_H,\mathcal B_0
\bigl(q,A(D_{\eta_0},w)^{-1}\mathcal B_0(q,\overline q)\bigr)\bigr\rangle,\\
&\langle P_\varepsilon p_\varepsilon,
\widehat{\mathcal B}_\varepsilon
(P_\varepsilon\overline q_\varepsilon,P_\varepsilon h_{20,\varepsilon})\rangle\longrightarrow
\bigl\langle p_H,\mathcal B_0
\bigl(\overline q,(2i\omega_0I-A(D_{\eta_0},w))^{-1}\mathcal B_0(q,q)\bigr)\bigr\rangle.
\end{aligned}
\end{equation*}
By \eqref{eq:hopf-critical-parameter},
\(\tau_\varepsilon/(2\omega_\varepsilon)\to1/(2\omega_0)\)
as \(\varepsilon\to0^+\). Substituting these limits into
\eqref{eq:hopf-scaled-lyapunov} and using
\eqref{eq:hopf-endpoint-lyapunov}, we conclude that
\begin{equation}
\ell_\varepsilon\longrightarrow\ell(D_{\eta_0},w)
\qquad(\varepsilon\to0^+).
\label{eq:hopf-lyapunov-limit}
\end{equation}

\medskip
\refstepcounter{proofstep}\label{step:hopf-branches-stability}
\noindent\textbf{Step \theproofstep.}
For each fixed sufficiently small \(\varepsilon>0\),
Step~\ref{step:hopf-crossing-crossing} gives the simple pair
\(\pm i\omega_\varepsilon\) and \(\alpha_\varepsilon>0\) in
\eqref{eq:hopf-critical-parameter};
all other eigenvalues have negative real parts.
The signs of \(\ell(D_{\eta_0},w)\) in Lemma~\ref{lem:matrix-choice}, together with
\eqref{eq:hopf-lyapunov-limit}, give \(\ell_\varepsilon\ne0\).
After the time change
\(s=\omega_\varepsilon t\), the parabolic Hopf theorem and its stability
result \cite[Theorems~1.11 and~2.13, Remark~2.24]{CrandallRabinowitz1977}
apply and yield a local branch of nonstationary periodic solutions;
see also \cite[Section~8.2]{Henry1981}.
With \(v=zq_\varepsilon+\overline z\,\overline q_\varepsilon+O(|z|^2)\) as \(z\to0\)
and \(r=|z|\), the equation for the amplitude \(r\) in the original time \(t\) is
\begin{equation}
\frac{\mathrm d r}{\mathrm d t}=r\left[\alpha_\varepsilon(\eta-\eta_\varepsilon)
+\omega_\varepsilon\ell_\varepsilon r^2
+o(|\eta-\eta_\varepsilon|+r^2)\right].
\label{eq:hopf-radial-normal-form}
\end{equation}
Along the nonstationary periodic branch,
\eqref{eq:hopf-radial-normal-form} yields, for fixed sufficiently small \(\varepsilon>0\),
\begin{equation*}
r^2=-\frac{\alpha_\varepsilon(\eta-\eta_\varepsilon)}
{\omega_\varepsilon\ell_\varepsilon}
+o(|\eta-\eta_\varepsilon|)
\qquad(\eta\to\eta_\varepsilon).
\end{equation*}
By \eqref{eq:hopf-critical-parameter},
\(\omega_\varepsilon>0\) and \(\alpha_\varepsilon>0\). Thus, if
\(\ell_\varepsilon<0\), the periodic solutions occur for
\(\eta>\eta_\varepsilon\), and the bifurcation is supercritical; if
\(\ell_\varepsilon>0\), they occur for \(\eta<\eta_\varepsilon\), and
the bifurcation is subcritical. Since the other eigenvalues have negative
real parts by Step~\ref{step:hopf-crossing-crossing}, the periodic solutions
are orbitally asymptotically stable in the former case and unstable in the latter.

For fixed \(\varepsilon>0\), parabolic regularity gives convergence to
\(\phi_\varepsilon\) in the uniform spatial norm as the branch amplitude
\(r\to0^+\). Choose the amplitude small enough that
\begin{equation*}
\sup_t\|u(t,\cdot)-\phi_\varepsilon\|_{C(\overline\Omega)}
<\tfrac12\min_{\overline\Omega}\phi_\varepsilon.
\end{equation*}
Hence the periodic solutions are strictly positive. The size of this
amplitude neighborhood may depend on \(\varepsilon\), and the period tends to
\(2\pi/\omega_\varepsilon\) as \(r\to0^+\), with \(\varepsilon\) fixed. Applying these
conclusions to the two choices \((D,w^{\rm s})\) and
\((D,w^{\rm u})\) in \eqref{eq:hopf-common-data} proves part~\textup{(\ref{item:hopf-branches})}
and completes the proof of Theorem~\ref{thm:three-peak-hopf}.

\end{proof}

\begin{figure}[!htbp]
\centering
\setcounter{figurepanel}{0}
\renewcommand{\thefigurepanel}{\alph{figurepanel}}
\refstepcounter{figurepanel}\label{panel:supercritical}%
\refstepcounter{figurepanel}\label{panel:subcritical}%
\includegraphics[trim=2pt 2pt 2pt 2pt,clip]{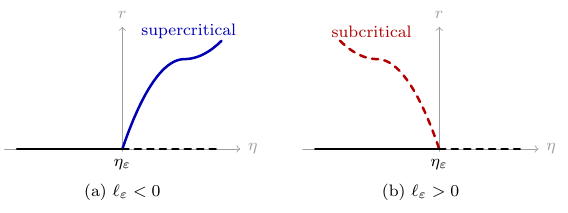}
\caption{Supercritical (left) and subcritical (right) Hopf bifurcations.}
\label{fig:hopf-branch-stability}
\end{figure}

\FloatBarrier
\section*{Acknowledgments}
We acknowledge the assistance of AI tools in identifying references that helped us learn semiclassical analysis techniques used in the spectral reduction of the non-self-adjoint linearized operator. These tools also assisted with some calculations, English-language editing, and the presentation of some mathematical arguments during the preparation of this manuscript.

Y.~Lou was partially supported by the Interdisciplinary Disciplines
Breakthrough Plan of the Ministry of Education of China
(JYB2025XDXM902), and by NSF of China
(12631017 and 12261160366). M.-Z.~Xin is partially supported by
NSF of China (12601334), the China National
Postdoctoral Program for Innovative Talents (BX2026009), and the China
Postdoctoral Science Foundation (2026M793394).

\end{document}